\newtheorem{proposition}{Proposition}
\newtheorem{theorem}[proposition]{Theorem}
\newtheorem{lemma}[proposition]{Lemma}
\newtheorem{corollary}[proposition]{Corollary}
\theoremstyle{remark}
\theoremstyle{definition}
\newtheorem{definition}[proposition]{Definition}
\numberwithin{equation}{section}
\numberwithin{proposition}{section}
\numberwithin{figure}{section}
\numberwithin{table}{section}
\newcommand{\N}{\mathbb{N}}
\newcommand{\R}{\mathbb{R}}
\newcommand{\E}{\mathbb{E}}
\newcommand{\eps}{\varepsilon}
\renewcommand{\ge}{\geqslant}
\renewcommand{\leq}{\leqslant}
\renewcommand{\geq}{\geqslant}
\renewcommand{\subset}{\subseteq}
\renewcommand{\bar}{\overline}
\renewcommand{\tilde}{\widetilde}
\renewcommand{\hat}{\widehat}
\renewcommand{\d}{\mathrm{d}}
\newcommand{\var}{\mathbb{V}\mathrm{ar}}
\newcommand{\la}{\left\langle}
\newcommand{\ra}{\right\rangle}
\renewcommand{\H}{\mathsf{H}}
\newcommand{\Pout}{\mathcal{P}}
\renewcommand{\div}{\mathsf{div}}
\newcommand{\tz}{\tilde z}
\newcommand{\ty}{\tilde y}
\newcommand{\bF}{\bar F}
\newcommand{\htwo}{{h_2}}
\newcommand{\pr}[1]{{(#1)}}
\newcommand{\rp}[1]{{|#1)}}
\newcommand{\gau}{{\mathbf{g}}}
\newcommand{\dash}{{\text{--}}}
\newcommand{\Int}{\mathsf{int}\,}
\begin{document}

\author[Hong-Bin Chen]{Hong-Bin Chen}
\address[Hong-Bin Chen]{Courant Institute of Mathematical Sciences, New York University, New York, New York, USA}
\email{hbchen@cims.nyu.edu}

\author[Jiaming Xia]{Jiaming Xia}
\address[Jiaming Xia]{Department of Mathematics, University of Pennsylvania, Philadelphia, Pennsylvania, USA}
\email{xiajiam@sas.upenn.edu}

\keywords{multi-layer, generalized linear model, free energy, Hamilton-Jacobi equation}
\subjclass[2010]{82B44, 82D30}

\title{Limiting free energy of multi-layer generalized linear models}

\begin{abstract}
We compute the high-dimensional limit of the free energy associated with a multi-layer generalized linear model. Under certain technical assumptions, we identify the limit in terms of a variational formula. The approach is to first show that the limit is a solution to a Hamilton--Jacobi equation whose initial condition is related to the limiting free energy of a model with one fewer layer. Then, we conclude by an iteration.
\end{abstract}

\maketitle

\section{Introduction}

\subsection{Setting}
Let us describe the model.
For $n\in\N$, let $X$ be an $\R^{n}$-valued random vector with distribution $P_X$, serving as the original signal. Fix any $L\in\N$ as the number of layers. For $l\in\{0,1,2,\dots,L\}$, let $n_l = n_l(n)\in\N$ be the dimension of the signal at the $l$-th layer. We assume that $n_0 =n $ and
\begin{align}\label{e.n_l/n}
    \lim_{n\to\infty}\frac{n_l}{n} = \alpha_l>0, 
\end{align}
for some $\alpha_l>0$. In particular, we have that $\alpha_0 =1 $.

For each $l\in\{1,2,\dots, L\}$, let
\begin{itemize}
    \item $\varphi_l: \R\times \R^{k_l}\to\R$ be a measurable function for some fixed $k_l\in\N$ (independent of~$n$);
    \item $(A^{(l)}_j)_{1\leq j\leq n_l}$ be 
    a finite sequence of $\R^{k_l}$-valued random vectors, all together with law $P_{A^\pr{l}}$;
    \item $\Phi^\pr{l}$ be an $n_l\times n_{l-1}$ random matrix with law $P_{\Phi^\pr{l}}$.
\end{itemize}
For $l'\geq l$, we also write
\begin{align}\label{e.A^[l,l'],Phi^[l,l']}
    A^{[l,l']}=\left(A^\pr{m}\right)_{l\leq m\leq l'},\qquad \Phi^{[l,l']} = \left(\Phi^\pr{m}\right)_{l\leq m\leq l'},
\end{align}
and denote their laws by $P_{A^{[l,l']}}$ and $P_{\Phi^{[l,l']}}$, respectively.

Starting with $X^\pr{0}=X$, we iteratively define, for each $l\in \{1,\dots,L\}$, 
\begin{align*}
    X^{(l)}_j= \varphi_l\left(\frac{1}{\sqrt{n_{l-1}}}\sum^{n_{l-1}}_{k=1}\Phi^{(l)}_{jk}X^{(l-1)}_k,\ A^{(l)}_j\right),\quad \forall 1\leq j\leq n_l.
\end{align*}
Viewing the action of $\varphi_l$ component-wise, we also write
\begin{align}\label{e.X^l}
     X^{(l)}=\varphi_l\left(\frac{1}{\sqrt{n_{l-1}}}\Phi^{(l)}X^{(l-1)},\ A^{(l)}\right).
\end{align}
For $\beta\ge 0$, the observable is given by
\begin{align}\label{e.Y^circ}
    Y^\circ = \sqrt{\beta}X^{(L)}+Z
\end{align}
where $Z$ is an $n_L$-dimensional standard Gaussian vector. The inference task is to recover $X$ based on the knowledge of $Y^\circ$, $(\varphi_l)_{1\leq l\leq L}$ and $\Phi^{[1,L]}$.

Using \eqref{e.X^l} iteratively, we can find a deterministic function $\zeta_{L-1}$ such that $X^\pr{L-1}=\zeta_{L-1}(X,A^{[1,L-1]},\Phi^{[1,L-1]})$. We introduce the shorthand notation:
\begin{align}\label{e.x^L-1}
    x^\pr{L-1}=\zeta_{L-1}\left(x,a,\Phi^{[1,L-1]}\right), \quad \forall x \in \R^n,\ a=\left(a^\pr{1},\dots, a^\pr{L-1}\right)\in \prod_{l=1}^{L-1}\R^{n_l\times k_l}.
\end{align}
We emphasize that $x^\pr{L-1}$ is random due to the presence of $\Phi^{[1,L-1]}$ and also depends on the input $x$ and $a$. By Bayes' rule, the law of $(X,A^{[1,L-1]})$ conditioned on $(Y^\circ,\Phi^{[1,L]})$ is given by
\begin{align*}
    \frac{1}{\mathcal Z_{\beta,L,n}^\circ} \Pout_{\beta,L,n}\bigg(Y^\circ\bigg|\frac{1}{\sqrt{n_{L-1}}}\Phi^\pr{L}x^\pr{L-1}\bigg)\d P_X(x) \d P_{A^{[1,L-1]}}(a)
\end{align*}
where
\begin{gather}
    \Pout_{\beta,L,n}(y|z) = \int e^{-\frac{1}{2}|y - \sqrt{\beta}\varphi_L(z,a^\pr{L})|^2 }\d P_{A^\pr{L}}\left(a^\pr{L}\right),\quad \forall y,z \in \R^{n_L},\label{e.P_beta(y|x)}\\
    \mathcal Z_{\beta,L,n}^\circ = \int \Pout_{\beta,L,n}\bigg(Y^\circ\bigg|\frac{1}{\sqrt{n_{L-1}}}\Phi^\pr{L}x^\pr{L-1}\bigg)\d P_X(x) \d P_{A^{[1,L-1]}}(a).\label{e.Z^circ}
\end{gather}
The normalizing factor $\mathcal Z^\circ_{\beta,L,n}$ is called the partition function. The central object to study is the free energy
\begin{align}\label{e.F^circ}
    F^\circ_{\beta,L,n} = \frac{1}{n} \log \mathcal Z_{\beta,L,n}^\circ.
\end{align}

To compute the limit of $\E F^\circ_{\beta,L,n}$ as $n\to\infty$, we make the following assumptions:
\begin{enumerate}[start=1,label={({{H\arabic*}})}]
\item \label{i.assump_iid_bdd} $X$ has i.i.d. entries, and the law of $X_1$ is supported on $[-1,1]$, independent of $n$ and satisfies that $X_1\neq 0$ with positive probability;
\item \label{i.assump_varphi_l_2^l_diff} for every $l\in\{1,\dots,L\}$, $\varphi_l$ is bounded, not identically zero, and continuously differentiable with bounded derivatives up to the $2^l$-th order;
\item \label{i.assump_Phi_Gaussian} for every $l\in\{1,\dots,L\}$, $\Phi^\pr{l}$ consists of independent standard Gaussian entries, and $(A^\pr{l}_j)_{1\leq j\leq n_l}$ consists of i.i.d.\ $\R^{k_l}$-valued random vectors with a fixed law and bounded a.s.
\end{enumerate}

To state the main result, we need more definitions. Throughout this work, we set
\begin{align}\label{e.R_+}
    \R_+=[0,\infty).
\end{align}
For every $l\in\{0,1,\dots,L\}$ and $n\in\N$, define
\begin{align}\label{e.rho_l}
    \rho_{l,n}= \frac{1}{n_l}\E\left|X^\pr{l}\right|^2.
\end{align}
Due to Lemma~\ref{l.cvg_rho} to be proved later, the following limit exists
\begin{gather}\label{e.rho_cvg}
    \lim_{n\to\infty}\rho_{l,n} = \rho_l
\end{gather}
for some $\rho_l>0$ with explicit expression.
Let $P_{X_1}$ be the law of $X_1$ and $Z'_1$ be a standard Gaussian random variable. Set
\begin{align}\label{e.Psi_0}
    \Psi_0 (r) = \E \log \int_\R e^{rX_1 x_1 + \sqrt{r}Z'_1 x_1 - \frac{r}{2}|x_1|^2}\d P_{X_1}(x_1), \quad\forall r\in\R_+.
\end{align}
For every $l\in\{1,\dots,L\}$, $\rho\geq 0$ and $h=(h_1,h_2)\in [0,\rho]\times\R_+$, define
\begin{align}
    &\Psi_l(h;\rho) \notag
    \\
    & =  \E\log\int\tilde\Pout_{h_2,l}\left(\sqrt{h_2}\varphi_l\left(\sqrt{h_1}V_1+\sqrt{\rho-h_1}W_1,A_1^\pr{l}\right)+Z_1\Big|\sqrt{h_1}V_1+\sqrt{\rho-h_1}w\right)\d P_{W_1}(w), \label{e.Psi_l}
\end{align}
where $V_1,W_1,Z_1$ are independent standard Gaussian random variables and
\begin{align}\label{e.tilde_Pout}
    \tilde\Pout_{h_2,l}(y|z) = \int_{\R^{k_l}} e^{-\frac{1}{2}|y - \sqrt{h_2}\varphi_l(z,a^\pr{l}_1)|^2 }\d P_{A^\pr{l}_1}\left(a^\pr{l}_1\right),\quad\forall y,z\in \R.
\end{align}
Now, we are ready to state the main result.

\begin{theorem}\label{t}
Under assumptions \ref{i.assump_iid_bdd}--\ref{i.assump_Phi_Gaussian}, it holds that
\begin{align}\label{e.limit}
    \lim_{n\to\infty} \E F^\circ_{\beta,L,n}  = \sup_{z^\pr{L}}\inf_{y^\pr{L}}\sup_{z^\pr{L-1}}\inf_{y^\pr{L-1}}\cdots \sup_{z^\pr{1}}\inf_{y^\pr{1}} \phi_L\left(\beta; y^\pr{1},\cdots,y^\pr{L};z^\pr{1},\cdots,z^\pr{L}\right)
\end{align}
where $\sup_{z^\pr{l}}$ is taken over $z^\pr{l}\in\R_+\times [0,\frac{\alpha_{l-1}\rho_{l-1}}{2}]$, $\inf_{y^\pr{l}}$ is taken over $y^\pr{l}\in [0,\rho_{l-1}]\times \R_+$, and 
\begin{align}\label{e.phi_L}
    &\phi_L\left(\beta; y^\pr{1},\cdots,y^\pr{L};z^\pr{1},\cdots,z^\pr{L}\right)\\
    &\qquad\qquad = \alpha_L\Psi_L\left(y^\pr{L}_1,\beta;\rho_{L-1}\right)+\sum_{l=1}^{L-1}\alpha_l\Psi_l\left(y^\pr{l}_1,y^\pr{l+1}_2;\rho_{l-1}\right) + \Psi_0\left(y^\pr{1}_2\right)\notag
    \\
    &\qquad\qquad\qquad+ \sum_{l=1}^L\left(-y^\pr{l}\cdot z^\pr{l}+\frac{2}{\alpha_{l-1}} z^\pr{l}_1z^\pr{l}_2\right)+\sum_{l=2}^L \frac{\alpha_{l-1}}{2}\left(1+\rho_{l-1}y^\pr{l}_2\right).\notag
\end{align}

\end{theorem}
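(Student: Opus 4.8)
The plan is to argue by induction on the number of layers. For $l\in\{0,1,\dots,L\}$ and $\sigma\ge0$, let $F_{l,n}[\sigma]$ denote the $n$-normalized averaged free energy of the inference problem in which, in place of $Y^\circ$, one observes $\sqrt\sigma\,X^{(l)}+Z'$ for an independent $n_l$-dimensional standard Gaussian $Z'$, with $(\varphi_j)_{j\le l}$ and $\Phi^{[1,l]}$ known; comparing with \eqref{e.Y^circ} and \eqref{e.Z^circ} one has $\E F^\circ_{\beta,L,n}=\E F_{L,n}[\beta]$, and a direct computation using \ref{i.assump_iid_bdd} and \eqref{e.Psi_0} gives $\lim_{n\to\infty}\E F_{0,n}[\sigma]=\Psi_0(\sigma)-\tfrac12(1+\rho_0\sigma)$. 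The heart of the argument is the recursive identity, to be established for each $l\in\{1,\dots,L\}$,
\[
\lim_{n\to\infty}\E F_{l,n}[\sigma]=\sup_{z}\inf_{y}\Big\{\alpha_l\Psi_l\big(y_1,\sigma;\rho_{l-1}\big)-y\cdot z+\tfrac{2}{\alpha_{l-1}}z_1z_2+\tfrac{\alpha_{l-1}}{2}\big(1+\rho_{l-1}y_2\big)+g_{l-1}(y_2)\Big\},
\]
where $g_{l-1}:=\lim_n\E F_{l-1,n}[\,\cdot\,]$ is furnished by the inductive hypothesis, the supremum runs over $z\in\R_+\times[0,\tfrac{\alpha_{l-1}\rho_{l-1}}{2}]$, and the infimum over $y\in[0,\rho_{l-1}]\times\R_+$. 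Granting this identity for all $l$, one unrolls it from $l=L$, $\sigma=\beta$, down to $l=1$: the constant $-\tfrac12(1+\rho_0\sigma)$ carried by $g_0$ cancels against the $l=1$ instance of $\tfrac{\alpha_0}{2}(1+\rho_0 y_2)$ because $\alpha_0=1$, and relabeling the second argument of $\Psi_l$ at level $l\le L-1$ as $y^{(l+1)}_2$ leaves exactly $\phi_L$ of \eqref{e.phi_L} and the nested optimization of \eqref{e.limit}.

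To prove the recursive identity for fixed $l$ and $\sigma$, I would enrich $F_{l,n}[\sigma]$ with a coupling parameter $t\in[0,1]$ and a two-dimensional perturbation $h=(h_1,h_2)\in[0,\rho_{l-1}]\times\R_+$: the parameter $t$ interpolates between the genuine $l$-layer model at $t=1$ and, at $t=0$, the model in which the $l$-th layer is detached—so that the penultimate signal $X^{(l-1)}$ produced by the first $l-1$ layers is seen only through the perturbation, while an independent standalone layer-$l$ channel is observed separately—while $h_1$ parametrizes a scalar Gaussian channel revealing $X^{(l-1)}$ up to overlap $h_1$ (which simultaneously serves as the perturbation forcing the overlap of the $X^{(l-1)}$-variables to concentrate) and $h_2$ a Gaussian channel of signal-to-noise ratio $h_2$ on $X^{(l-1)}$. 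Writing $\overline F_{l,n}(t,h)$ for the resulting averaged free energy, one has $\overline F_{l,n}(1,0)=F_{l,n}[\sigma]$, whereas at $t=0$ the model factorizes and $\lim_n\E\overline F_{l,n}(0,h)=\alpha_l\Psi_l(h_1,\sigma;\rho_{l-1})+\tfrac{\alpha_{l-1}}{2}(1+\rho_{l-1}h_2)+g_{l-1}(h_2)$; here assumption \ref{i.assump_Phi_Gaussian} is what identifies the standalone layer-$l$ contribution with $\Psi_l(\cdot,\sigma;\rho_{l-1})$, since conditionally on $X^{(l-1)}$ the pre-activation $\tfrac{1}{\sqrt{n_{l-1}}}\Phi^{(l)}X^{(l-1)}$ has i.i.d.\ Gaussian entries of variance $\tfrac{1}{n_{l-1}}|X^{(l-1)}|^2\to\rho_{l-1}$ by Lemma~\ref{l.cvg_rho}, and the middle term is the normalization of the Gaussian channel acting on the $n_{l-1}$-dimensional signal.

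Differentiating $\E\overline F_{l,n}$ in $t$ and in $h$ by Gaussian integration by parts and simplifying the Gibbs averages with the Nishimori identity, one expresses $\partial_t\E\overline F_{l,n}$ in terms of $\nabla_h\E\overline F_{l,n}$ as $\tfrac{2}{\alpha_{l-1}}\big(\partial_{h_1}\E\overline F_{l,n}\big)\big(\partial_{h_2}\E\overline F_{l,n}\big)$, up to an error governed by the fluctuations of the overlap of the $X^{(l-1)}$-variables and of $\tfrac{1}{n_{l-1}}|X^{(l-1)}|^2$. This is where assumption \ref{i.assump_varphi_l_2^l_diff} is needed: the $2^l$-fold differentiability of $\varphi_l$ permits an expansion of the depth-$l$ composition defining $X^{(l-1)}$ (and of the conjugate fields entering the cavity computation) accurate enough to make the error $o(1)$ uniformly on compact sets, so that $\E\overline F_{l,n}$ is an approximate solution of the Hamilton--Jacobi equation $\partial_t f=\tfrac{2}{\alpha_{l-1}}\partial_{h_1}f\,\partial_{h_2}f$ on $[0,1]\times([0,\rho_{l-1}]\times\R_+)$ with initial condition $f(0,h)=\alpha_l\Psi_l(h_1,\sigma;\rho_{l-1})+\tfrac{\alpha_{l-1}}{2}(1+\rho_{l-1}h_2)+g_{l-1}(h_2)$. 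Propagating along the induction that this initial condition is convex and Lipschitz, nondecreasing in $h_1$, and has $h_2$-partial derivative in $[0,\tfrac{\alpha_{l-1}\rho_{l-1}}{2}]$ (the last two being I-MMSE-type monotonicities of $\Psi_l$ and of $g_{l-1}$), the Hopf formula produces a unique Lipschitz solution; evaluating it at $t=1$, $h=0$ yields exactly the $\sup_z\inf_y$ expression of the recursive identity, with $z$ and $y$ constrained precisely to the stated sets. Finally, the comparison principle for the equation together with equi-Lipschitz bounds on $\E\overline F_{l,n}$ force $\lim_n\E\overline F_{l,n}$ to exist and to equal that solution, which closes the induction.

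The step I expect to be the main obstacle is the derivation of the Hamilton--Jacobi equation with a genuinely vanishing error, i.e.\ the quantitative concentration of the overlaps attached to the layer-$(l-1)$ variables: because $X^{(l-1)}$ is a depth-$l$ nonlinear function of the latent data, each further layer must be controlled to a higher differentiability order, which is exactly why \ref{i.assump_varphi_l_2^l_diff} demands $2^l$ derivatives of $\varphi_l$. Intertwined with it is the bookkeeping of showing that the convexity, Lipschitz, monotonicity, and range properties required to invoke the Hopf formula are inherited from one layer by the next—at each stage the initial condition is available only as the variational formula produced at the previous stage—and that the behavior at the endpoints of the interval $[0,\rho_{l-1}]$ for the $h_1$-variable is such that both the Hopf representation and the comparison principle remain valid up to the boundary.
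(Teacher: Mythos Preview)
Your proposal is correct and follows essentially the same strategy as the paper: induction on the number of layers, an enriched free energy in $(t,h_1,h_2)$ satisfying the approximate Hamilton--Jacobi equation $\partial_t f=\tfrac{2}{\alpha_{l-1}}\partial_{h_1}f\,\partial_{h_2}f$, factorization at $t=0$ into the scalar piece $\alpha_l\Psi_l(h_1,\sigma;\rho_{l-1})$ plus the $(l-1)$-layer free energy (shifted by $\tfrac{\alpha_{l-1}}{2}(1+\rho_{l-1}h_2)$), and identification of the limit via the Hopf formula together with a uniqueness argument. Two technical points the paper handles that your sketch elides: the interpolation naturally lives on the \emph{time-dependent} domain $\Omega_{\rho_{l-1}}=\{h_1\le\rho_{l-1}(1-t)\}$ rather than the product rectangle you write, and the weak-solution theory and convergence proof must be adapted to this shrinking domain; and the convexity, monotonicity, and derivative-range properties of the initial datum required for Hopf are obtained directly from pre-limit derivative estimates on $\bar F_{\beta,L,n}$ (the paper's Lemma~\ref{l.der_est}) rather than from the nested variational formula at the previous stage, which bypasses the bookkeeping you flag in your final paragraph.
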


We briefly comment on hypotheses \ref{i.assump_iid_bdd}--\ref{i.assump_Phi_Gaussian}. 

The nonzero assumptions in \ref{i.assump_iid_bdd} and \ref{i.assump_varphi_l_2^l_diff} are reasonable in the setting of statistical inference where only non-constant signals are interesting. They are also purely technical in order to ensure that $\rho_l$ in \eqref{e.rho_cvg} is nonzero and thus some domain (defined in \eqref{e.def_Omega_r}) we work on is non-degenerate. In general, one can always consider a reduced model obtained from the original one by starting from the first layer after which all layers including itself contain nonzero signals.
Alternatively, small constants can be added to fulfill the nonzero assumptions, and the effect of these constants are traceable through explicit formulae.

The assumption that $X$ has i.i.d.\ entries in \ref{i.assump_iid_bdd} and the assumption on the differentiability of $\varphi_l$ in \ref{i.assump_varphi_l_2^l_diff} are mainly used in deriving concentration results in Section~\ref{s.aux}. 
We believe that results similar to Theorem~\ref{t} are still valid under different or weaker assumptions. For instance, when $X$ is uniformly distributed on the centered $n$-sphere with radius $\sqrt{n}$, concentration results needed here are expected to hold. The high order of differentiability in \ref{i.assump_varphi_l_2^l_diff} is needed in an iterative application of the Gaussian integration by parts due to the presence of multiple layers. We remark that in the $2$-layer setting, a careful treatment only needs $\varphi_1$ and $\varphi_2$ to be twice continuously differentiable with bounded derivatives, as done in \cite{gabrie2019entropy}, while \ref{i.assump_varphi_l_2^l_diff} requires $\varphi_2$ to be continuously differentiable up to the fourth order. Since we are considering general cases, we resort to \ref{i.assump_varphi_l_2^l_diff} for convenience.

On the other hand, many results in this work do not require assumptions as strong as \ref{i.assump_iid_bdd} and \ref{i.assump_varphi_l_2^l_diff}.
Hence, whenever possible, we will instead assume the following, together with \ref{i.assump_Phi_Gaussian}:
\begin{enumerate}[start=1,label={({{h\arabic*}})}]
    \item \label{i.assump_w_|X|<sqrt_n} for every $n\in\N$, $|X|\leq \sqrt{n}$ a.s.;
    \item
    \label{i.assump_w_varphi}
    for every $l\in\{1,\dots,L\}$, $\varphi_l$ is bounded and twice continuously differentiable with bounded derivatives.
\end{enumerate}

\subsection{Related works}

Generalized linear models are relevant in many fields including signal processing, statistical learning, and neural networks. Its multi-layer setup models a type of feed-forward neural network, which captures some of the key features of deep learning. For more details on these connections, we refer to \cite{barbier2019optimal, gabrie2019entropy} and references therein.
Recent progress in rigorous studies of information-theoretical aspects of these models have been made using methods originated from statistical physics. The mutual information of a model, a key quantity in these investigations, is related to the free energy via a simple additive relation. Therefore, the high-dimensional limit of the free energy is the central object in these approaches. Variational formulae for the free energy have been rigorously proven in the one-layer setting in \cite{barbier2019optimal} and the two-layer setting in \cite{gabrie2019entropy}.

The two works just mentioned above employed the powerful adaptive interpolation method introduced in \cite{barbier2019adaptive,barbier2019adaptive2}, which can be seen as an evolution from the classic interpolation method in statistical physics. This new method has proven to be successful and versatile in treating many different models and settings \cite{dia2016mutual,bmm,luneau2020high,luneau2019mutual,reeves2020information}.

The approach adopted in this work is based on identifying an enriched version of the original free energy with a solution to a certain Hamilton--Jacobi equation determined by the model. This approach was first introduced in \cite{HJinfer,HJrank} and has been applied also to the study of spin glass models \cite{parisi,mourrat2020extending,bipartite,mourrat2020free}. Similar considerations in physics also appeared in \cite{genovese2009mechanical, guerra2001sum,  barra2010replica, barra2013mean}. 

In treating statistical inference problems, two notions of solutions have been considered. One is the viscosity solution used in \cite{HJinfer,HBJ,chen2021statistical}, and the other is the weak solution in \cite{HJrank,HB1,HBJ}. In this paper, we take the latter approach due to the convenience and simplicity in dealing with boundary conditions under the notion of weak solutions.

Compared with \cite{HJrank,HB1,HBJ}, the novelty here lies in an iterative argument to treat the multi-layer setting. Let us explain this briefly. After enriching the $L$-layer model and verifying some concentration results, we can show that the corresponding free energy converges to the unique solution of a certain Hamilton--Jacobi equation whose initial condition is determined by the limiting free energy associated with the $(L-1)$-layer model. Then, the desired result naturally follows from an iteration of this result applied to each layer. Apart from this, different from \cite{HJrank,HB1,HBJ}, the Hamilton--Jacobi equation considered here is defined over a domain where the range of spacial variables depends on time. Accordingly, treatments used previously have to be adjusted.

The rest of the paper is organized as follows. In Section~\ref{s.approx}, we enrich the model and derive that the enriched free energy satisfies an approximate Hamilton--Jacobi equation. We also record some basic properties of the derivatives of the free energy. In Section~\ref{s.hj}, we give the definition of weak solutions and prove the existence and uniqueness. In particular, the existence is furnished by a variational formula known as the Hopf formula. Using these, we prove the key convergence result of the enriched free energy in Section~\ref{s.cvg}, which is used in an iterative argument to prove Theorem~\ref{t}. Lastly, we collect auxiliary results in Section~\ref{s.aux}, including the convergence in \eqref{e.rho_cvg}, concentration of the norm of $X^\pr{L}$, and concentration of the free energy.

\subsection*{Acknowledgement}

We warmly thank Jean–Christophe Mourrat for many helpful comments and discussions.

\section{Approximate Hamilton--Jacobi equations}\label{s.approx}

In this section, we enrich the model and derive that the associated free energy satisfies an approximate Hamilton--Jacobi equation, which is stated in Proposition~\ref{p.approx_hj}. We also record basic properties of derivatives of the free energy in Lemma~\ref{l.der_est}.

\subsection{Enrichment}
Recall the notation $\R_+$ in \eqref{e.R_+} and $\rho_{l,n}$ defined in \eqref{e.rho_l}. For $\rho>0$, define
\begin{align}\label{e.def_Omega_r}
    \Omega_\rho = \{(t,h_1,h_2)\in\R^3_+: h_1\leq \rho(1-t),\ t\leq 1   \}
\end{align}
where there is no restriction on $h_2$.
For $(t,h)\in \Omega_{\rho_{L-1,n}}$, define
\begin{gather}
    S = \sqrt{\frac{t}{n_{L-1}}}\Phi^\pr{L} X^\pr{L-1} + \sqrt{h_1}V + \sqrt{\rho_{L-1,n}- \rho_{L-1,n} t - h_1}W, \label{e.cap_S_mu}\\
    s = \sqrt{\frac{t}{n_{L-1}}}\Phi^\pr{L} x^\pr{L-1} + \sqrt{h_1}V + \sqrt{\rho_{L-1,n}- \rho_{L-1,n} t - h_1}w,  \label{e.s_mu}\\
    Y = \sqrt{\beta}\varphi_L\left(S,A^\pr{L}\right)+Z,\label{e.Y}\\
    Y' = \sqrt{h_2}X^\pr{L-1}+Z', \label{e.Y'}
\end{gather}
where $w\in \R^{n_L}$, $x^\pr{L-1}$ is given in \eqref{e.x^L-1}, $V,W$ are independent $n_L$-dimensional standard Gaussian vectors, $Z$ is given in \eqref{e.Y^circ}, and $Z'$ is an $n_{L-1}$-dimensional standard Gaussian vector. Due to \eqref{e.x^L-1} and \eqref{e.s_mu}, $s$ depends on $(x,w,a,\Phi^{[1,L]},V)$.

Recall $\Pout_{\beta,L,n}$ given in \eqref{e.P_beta(y|x)}.
We introduce the following Hamiltonian
\begin{align}\label{e.H_beta_L_n_(t,h)}
    H_{\beta,L,n}(x,w,a) = \log \Pout_{\beta,L,n}\big(Y\big|s\big)+ \sqrt{h_2}Y'\cdot x^\pr{L-1}-\frac{h_2}{2}\left|x^\pr{L-1}\right|^2,
\end{align}
where $x\in\R^n$, $w\in\R^{n_L}$, $a$ and $x^\pr{L-1}$ are given in \eqref{e.x^L-1}.
Define the associated partition function
\begin{align}\label{e.Z_n}
    \mathcal Z_{\beta,L,n} = \int e^{H_{\beta,L,n}(x,w,a)}\d P_X(x)\d P_W(w) \d P_{A^{[1,L-1]}}(a)
\end{align}
and consider the corresponding free energy
\begin{align}\label{e.F_beta_L_n_(t,h)}
    F_{\beta,L,n} = \frac{1}{n}\log\mathcal Z_{\beta,L,n}
\end{align}
and $\bar F_{\beta,L,n} = \E F_{\beta,L,n}$ where $\E$ is over $Y, Y', V, \Phi^{[1,L]}$ (recall that $x^\pr{L-1}$ depends on $\Phi^{[1,L-1]}$ as in \eqref{e.x^L-1}).
The domain of $F_{\beta,L,n}$ is $\Omega_{\rho_{L-1,n}}$ defined in \eqref{e.def_Omega_r}.

We often make the dependence of $F_{\beta,L,n}$ on $(t,h)\in \Omega_{\rho_{L-1,n}}$ explicit, and write $F_{\beta,L,n}(t,h)$. 
Comparing with the definitions of $\mathcal Z^\circ_{\beta,L,n}$ in \eqref{e.Z^circ} and $F^\circ_{\beta,L,n}$ in \eqref{e.F^circ}, we can verify that $\mathcal Z^\circ_{\beta,L,n}= \mathcal Z_{\beta,L,n}(1,0)$ and $F^\circ_{\beta,L,n}=F_{\beta,L,n}(1,0)$ evaluated at $t=1,h=0$. Hence, we view $F_{\beta,L,n}$ as the free energy associated with an enriched model. Note that the following holds
\begin{align}\label{e.EF^circ=bar_F}
    \E F^\circ_{\beta,L,n} = \bar F_{\beta,L,n}(1,0).
\end{align}

Throughout this work, we interpret $t$ as the ``temporal variable'' and $h=(h_1,h_2)$ as the ``spacial variable''. Moreover, we use the short hand notation $\partial_i = \partial_{h_i}$ for $i=1,2$, and denote by $\nabla = (\partial_1,\partial_2)$ the gradient operator. Define $\H_L:\R^2\to\R$ by
\begin{align}\label{e.H}
    \H_L(p) = \frac{2}{\alpha_{L-1}} p_1p_2.
\end{align}
The main goal is to prove the following proposition. 

\begin{proposition}\label{p.approx_hj}

Assume \ref{i.assump_w_|X|<sqrt_n}, \ref{i.assump_w_varphi} and \ref{i.assump_Phi_Gaussian} for some $L\in \N$.
For every $\beta\geq 0$ and every $n\in\N$, the function $(t,h)\mapsto \bF_{\beta,L,n}(t,h)$ is differentiable in $\Omega_{\rho_{L-1,n}}\setminus\{h_1 = \rho_{L-1,n}(1-t)\}$ and there is a constant $C$ such that, for all $(t,h)\in \Omega_{\rho_{L-1,n}}\setminus\{h_1 = \rho_{L-1,n}(1-t)\}$,
\begin{align*}
    \left|\partial_t \bar F_{\beta,L,n} - \H_L\left(\nabla \bar F_{\beta,L,n}\right)\right|\leq C\bigg(\frac{1}{n}\partial^2_2\bar F_{\beta,L,n} +  \E\big(\partial_2 F_{\beta,L,n} -  \partial_2\bar F_{\beta,L,n}\big)^2
\bigg)^\frac{1}{2}+a_n,
\end{align*}
where
\begin{align}\label{e.a_n_bd}
    a_n\leq C \left( n \E\left(\frac{\left|X^\pr{L-1}\right|^2}{n_{L-1}}-\rho_{L-1,n} \right)^2 \right)^\frac{1}{2}\left(\E\left(F_{\beta,L,n} - \bar F_{\beta,L,n}\right)^2\right)^\frac{1}{2} + C\left|\frac{n_{L-1}}{n}-\alpha_{L-1}\right|.
\end{align}

\end{proposition}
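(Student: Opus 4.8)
\medskip
\noindent\emph{Proof strategy and set-up.} The plan is to differentiate $\bF_{\beta,L,n}$ in $t$, $h_1$, $h_2$, to rewrite each derivative via Gaussian integration by parts and the Nishimori identity as an expected Gibbs average of bounded overlaps, and then to recognize $\partial_t\bF_{\beta,L,n}-\H_L(\nabla\bF_{\beta,L,n})$ as a covariance of overlaps plus error terms caused by replacing $\tfrac1{n_{L-1}}|X^\pr{L-1}|^2$ by $\rho_{L-1,n}$ inside \eqref{e.cap_S_mu}--\eqref{e.s_mu} and $\tfrac{n_{L-1}}n$ by $\alpha_{L-1}$ in \eqref{e.H}. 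To begin, fix $(t,h)$ in the interior $\Omega_{\rho_{L-1,n}}\setminus\{h_1=\rho_{L-1,n}(1-t)\}$, where $t$ and $\rho_{L-1,n}(1-t)-h_1$ are bounded away from $0$ on a neighbourhood, so that the coefficients of $V$, $W$, $w$ in $S$ and $s$ depend smoothly on $(t,h)$. Under \ref{i.assump_w_|X|<sqrt_n}--\ref{i.assump_w_varphi} and \ref{i.assump_Phi_Gaussian}, $\varphi_L$ and its first two derivatives are bounded, $|X|\le\sqrt n$, and $\Phi^\pr{L},V,W,Z,Z'$ are standard Gaussian; hence the integrand of $\mathcal Z_{\beta,L,n}$ and its relevant derivatives are uniformly dominated and one may differentiate $\bF_{\beta,L,n}$ under the integral and under $\E$. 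Write $\langle\cdot\rangle$ for the Gibbs average attached to $H_{\beta,L,n}$; being Bayes-optimal, the model satisfies the Nishimori identity, so under $\E\langle\cdot\rangle$ the true signal $X^\pr{L-1}$ and an independent sample $x^\pr{L-1}$ from $\langle\cdot\rangle$ are exchangeable.

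\medskip
\noindent\emph{Computing the derivatives.} Differentiating $H_{\beta,L,n}$ and integrating by parts in $\Phi^\pr{L},V,W,Z,Z'$ — which enter the Hamiltonian, and through $S$ and $Y$ the data, only linearly — converts the raw formulas into expected Gibbs averages of the (bounded) signal overlap $R:=\tfrac1{n_{L-1}}X^\pr{L-1}\cdot x^\pr{L-1}$, of the self-overlap $\tfrac1{n_{L-1}}|x^\pr{L-1}|^2$, of $\tfrac1{n_{L-1}}|X^\pr{L-1}|^2$, and of a bounded pre-activation channel overlap $\theta$ formed from $S$ and $s$. Using the Nishimori identity to trade the self-overlap for $R$, one finds that $\partial_2\bF_{\beta,L,n}=\tfrac{n_{L-1}}{2n}\E\langle R\rangle$, that $\partial_1\bF_{\beta,L,n}$ is a constant multiple of $\E\langle\theta\rangle$, and that — after the $\Phi^\pr{L}$-integration by parts — $\partial_t\bF_{\beta,L,n}$ equals the expected Gibbs average of $\theta$ against $R$ (and against $\tfrac1{n_{L-1}}|X^\pr{L-1}|^2$) times a prefactor containing $\tfrac{n}{n_{L-1}}$, with all constants arranged so that $\partial_t\bF_{\beta,L,n}$ and $\H_L(\nabla\bF_{\beta,L,n})$ differ only through lower-order terms of the three kinds described next.

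\medskip
\noindent\emph{Assembling the estimate.} Now compare $\partial_t\bF_{\beta,L,n}$ with $\H_L(\nabla\bF_{\beta,L,n})=\tfrac2{\alpha_{L-1}}\,\partial_1\bF_{\beta,L,n}\,\partial_2\bF_{\beta,L,n}$. After the deterministic replacement $\tfrac{n_{L-1}}n\to\alpha_{L-1}$ (whose cost is $|\tfrac{n_{L-1}}n-\alpha_{L-1}|$ times a bounded quantity, the second summand of $a_n$) and after replacing $\tfrac1{n_{L-1}}|X^\pr{L-1}|^2$ by $\rho_{L-1,n}$, the right-hand side is the product of expectations $\E\langle\theta\rangle\cdot\E\langle R\rangle$ whereas the left-hand side is (essentially) the expectation of the product $\E\langle\theta R\rangle$; their difference splits as $\E[\langle\theta R\rangle-\langle\theta\rangle\langle R\rangle]+\cov(\langle\theta\rangle,\langle R\rangle)$. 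As $\theta$ is bounded, the first piece is at most $C(\E[\langle R^2\rangle-\langle R\rangle^2])^{1/2}\le C(\tfrac1n\partial_2^2\bF_{\beta,L,n})^{1/2}$, using the standard bound on the Gibbs fluctuation of $R$ in terms of $\partial_2^2\bF_{\beta,L,n}$ (the leading part of $\partial_2 H_{\beta,L,n}$ being $n_{L-1}R$). Since $\langle R\rangle$ equals a multiple of $\partial_2 F_{\beta,L,n}$ up to the self-overlap $\tfrac1{n_{L-1}}|x^\pr{L-1}|^2$ and a negligible Gaussian term, the second piece is at most $C(\E(\partial_2 F_{\beta,L,n}-\partial_2\bF_{\beta,L,n})^2)^{1/2}$ plus a term driven by the fluctuations of $\tfrac1{n_{L-1}}|x^\pr{L-1}|^2$ about $\rho_{L-1,n}$; bounding a sum of two square roots by the square root of the sum gives the displayed main term. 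Finally, the remaining $\rho_{L-1,n}$-mismatch terms (from $\tfrac1{n_{L-1}}|X^\pr{L-1}|^2$, respectively $\tfrac1{n_{L-1}}|x^\pr{L-1}|^2$, in place of $\rho_{L-1,n}$) are re-centred via $\E(\tfrac1{n_{L-1}}|X^\pr{L-1}|^2-\rho_{L-1,n})=0$ and estimated by Cauchy--Schwarz against $F_{\beta,L,n}-\bF_{\beta,L,n}$, which after bookkeeping of the normalizations is the first summand of $a_n$ in \eqref{e.a_n_bd}.

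\medskip
\noindent\emph{Main difficulty.} The heart of the argument is the integration-by-parts step: one must track that $\Phi^\pr{L}$ enters simultaneously the true field $S$ (hence $Y$) and the candidate field $s$, that $\sqrt{h_1}V$ is common to $S$ and $s$, and that $\Phi^{[1,L-1]}$ sits nonlinearly inside both $X^\pr{L-1}$ and $x^\pr{L-1}$, and then verify via the Nishimori identity that every term not of the product form $\theta\times R$ either cancels or is genuinely of the $O(1/n)$ or mismatch type — in particular that the \emph{same} bounded overlap $\theta$ governs $\partial_1\bF_{\beta,L,n}$ and $\partial_t\bF_{\beta,L,n}$, which is exactly where the structure of \eqref{e.cap_S_mu}--\eqref{e.s_mu} is used. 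A secondary issue is to confirm that the $\rho_{L-1,n}$-discrepancy enters multiplied by a quantity whose fluctuations are, after rescaling by $n$, comparable to those of $F_{\beta,L,n}$, so that the product form of $a_n$ is legitimate.
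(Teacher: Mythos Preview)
Your overall strategy is correct and essentially matches the paper's proof: compute $\partial_t\bar F$, $\partial_1\bar F$, $\partial_2\bar F$ via Gaussian integration by parts and the Nishimori identity, obtain $\partial_t\bar F=\tfrac12\E\langle R\,\theta\rangle+a'_n$ (with $\theta=\tfrac1n\nabla u_Y(S)\cdot\nabla u_Y(s)$ in the paper's notation) and $\H_L(\nabla\bar F)=\tfrac12\E\langle R\rangle\,\E\langle\theta\rangle$ up to $C|\tfrac{n_{L-1}}{n}-\alpha_{L-1}|$, and then control the covariance and $a'_n$.

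Two points deserve tightening. First, your split of the covariance into a Gibbs piece $\E[\langle\theta R\rangle-\langle\theta\rangle\langle R\rangle]$ and a thermal piece $\cov(\langle\theta\rangle,\langle R\rangle)$, with the claimed bounds $C(\tfrac1n\partial_2^2\bar F)^{1/2}$ and $C(\E(\partial_2 F-\partial_2\bar F)^2)^{1/2}$ \emph{separately}, is not what the paper establishes and is not obviously true. The paper applies Cauchy--Schwarz directly under the full measure $\E\langle\cdot\rangle$ to get $|\E\langle\theta R\rangle-\E\langle\theta\rangle\E\langle R\rangle|\le(\var_{\E\langle\cdot\rangle}\theta)^{1/2}(\var_{\E\langle\cdot\rangle}R)^{1/2}$, bounds $\var_{\E\langle\cdot\rangle}\theta\le C$, and then proves the single inequality $\var_{\E\langle\cdot\rangle}R\le C(\tfrac1n\partial_2^2\bar F+\E(\partial_2 F-\partial_2\bar F)^2)$ via a somewhat delicate expansion of $\E\langle(\partial_2 H)^2\rangle$. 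Your route would work if you bounded each piece by the full sum, but the sharper separation you assert needs justification.

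Second, your description of the $a'_n$ estimate is not quite right. The re-centering from $F$ to $F-\bar F$ is \emph{not} licensed by $\E[\tfrac1{n_{L-1}}|X^{(L-1)}|^2-\rho_{L-1,n}]=0$; rather, it uses that the other factor has conditional mean zero, namely $\E[\Delta\Pout(Y|S)/\Pout(Y|S)\mid X^{(L-1)},S]=0$ (this is exactly the statement $\mathtt{II}_t=0$ in the paper). After Cauchy--Schwarz one still needs $\E[(\Delta\Pout(Y|S)/\Pout(Y|S))^2\mid X^{(L-1)},S]\le Cn$ rather than the naive $Cn^2$, which requires the component-wise decoupling of $\Pout_{\beta,L,n}$ (the cross terms vanish because $A^{(L)}$ has i.i.d.\ rows and $\varphi_L$ acts coordinate-wise). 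You flag this as the ``secondary issue'' but do not say how it is resolved; this step is what produces the factor $n^{1/2}$ (rather than $n$) in \eqref{e.a_n_bd}.
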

This suggests that the limiting Hamilton--Jacobi equation should be
\begin{align}\label{e.HJ}
    \partial_t f - \H_L(\nabla f) =0,
\end{align}
which will be studied in the next section.

\subsection{Proof of Proposition~\ref{p.approx_hj}}

Recall $\Pout_{\beta,L,n}$ defined in \eqref{e.P_beta(y|x)}.
For simplicity of notation, we write $H=H_{\beta,L,n}$, $\mathcal{Z} = \mathcal{Z}_{\beta,L,n}$, $ F=F_{\beta,L,n}$, $\Pout=\Pout_{\beta,L,n}$ and $\rho = \rho_{L-1,n}$.
For any measurable function $g:\R^n\times \R^{n_L}\times (\prod_{l=1}^{L-1}\R^{n_l\times k_l})\to\R$, we define
\begin{align*}
    \la g(x,w,a)\ra = \frac{1}{\mathcal Z}\int g(x,w,a) e^{H(x,w,a)} \d P_X(x) \d P_W(w) \d P_{A^{[1,L-1]}}(a).
\end{align*}
In other words, $\la \, \cdot \, \ra$ is the Gibbs measure with Hamiltonian $H$ and reference measure $\d P_X(x) \d P_W(w) \d P_{A^{[1,L-1]}}(a)$.

\subsubsection{Preliminaries}

We will repeatedly use two basic tools in our computations: the Gaussian integration by parts and the Nishimori identity. The simplest form of the Gaussian integration by parts can be stated as follows. For a standard Gaussian random variable $U$ and a differentiable function $g:\R\to\R$ satisfying $\E|g'(U)|<\infty$, it holds that
\begin{align*}
    \E [Ug(U)] = \E g'(U),
\end{align*}
which can be seen easily by rewriting the expectation as an integration with respect to the Gaussian density and performing the classic integration by parts. For the purpose of this work, a straightforward extension of the above to standard Gaussian vectors is sufficient. 

Using the definition of $H$ and Bayes' rule, we can see that the conditioned law of $X,W,A^{[1,L-1]}$ on $Y,Y',V,\Phi^{[1,L]}$ is given exactly by  the Gibbs measure $\la\,\cdot\,\ra$, namely,
\begin{align*}
    \la g\left(x,w,a,Y,Y',V,\Phi^{[1,L]}\right)\ra = \E \left[g\left(X,W,A^{[1,L-1]},Y,Y',V,\Phi^{[1,L]}\right)\Big| Y,Y',V,\Phi^{[1,L]}\right],
\end{align*}
for suitable measurable function $g$. The above immediately implies the Nishimori identity that, for suitable $g$,
\begin{align*}
    \E \la g\left(x,w,a,Y,Y',V,\Phi^{[1,L]}\right)\ra = \E g\left(X,W,A^{[1,L-1]}, Y,Y',V,\Phi^{[1,L]}\right).
\end{align*}
Independent copies of $(x,w,a)$ with respect to the Gibbs measure are called replicas and often denoted as $(x',w',a')$, $(x'',w'',a'')$, etc. When multiple replicas are present, the above identity can be extended in a straightforward way allowing us to replace one set of the replicas by $(X,W,A^{[1,L-1]})$, and vice versa. For instance, we have that
\begin{align*}
    \E \la g\left(x,w,a,x',w',a',Y,Y',V,\Phi^{[1,L]}\right)\ra = \E \la g\left(x,w,a,X,W,A^{[1,L-1]}, Y,Y',V,\Phi^{[1,L]}\right)\ra.
\end{align*}

\subsubsection{Computation of $\partial_t\bar F$}
Recall $H(x,w,a)$ in \eqref{e.H_beta_L_n_(t,h)} and let us also write
\begin{align*}H(x,w,a;y,y') =  \log \Pout (y| s) + \sqrt{h_2}y'\cdot x^\pr{L-1} - \frac{h_2}{2}\left|x^\pr{L-1}\right|^2.
\end{align*}
Hence, we have that $H(x,w,a) = H(x,w,a;Y,Y')$, and for each fixed $x,w,y,y'$, the only randomness of $H(x,w,a;y,y')$ comes from $\Phi^{[1,L]}$ (in $s$ and $x^\pr{L-1}$) and $V$ (in $s$).

We can verify that the conditioned law of $(Y,Y')$ given $(\Phi^{[1,L]},V)$ is given by
\begin{align}\label{e.cond_law_Y_Y'}
    \left(\frac{1}{(2\pi)^\frac{n_L}{2}}\int  e^{H(x, w,a; y,y')}\d P_X(x)\d P_W(w)\d P_{A^{[1,L-1]}}(a)\right)\d y \d y',
\end{align}
where we recall that $W$ is Gaussian. Recall the partition function \eqref{e.Z_n} and we introduce
\begin{align*}
    \mathcal Z(y,y') = \int  e^{H(x,w,a;y,y')}\d P_X(x)\d P_W(w)\d P_{A^{[1,L-1]}}(a).
\end{align*}
Then, note that $\mathcal Z = \mathcal Z(Y,Y')$ and the only randomness of $\mathcal Z(y,y')$ is from $\Phi^{[1,L]}$ and $V$.

We introduce the shorthand notation
\begin{align}\label{e.tilde_P}
    \d \tilde P_{y,y'} =\frac{1}{(2\pi)^\frac{n_L}{2}}\d y\,\d y'\, \d P_X(\tilde x)\,\d P_W(\tilde w)\,\d P_{A^{[1,L-1]}}(\tilde a)
\end{align}
which is a measure that integrates $y,y'$ and all variables with tildes $\tilde x, \tilde w,\tilde a$. Using these and \eqref{e.F_beta_L_n_(t,h)}, we can write that
\begin{align}\label{e.bar_F_in_tilde_P}
    \bar F = \frac{1}{n}\E\bigg[\int e^{H(\tilde x,\tilde w,\tilde a;y,y')}\log \mathcal Z(y,y')\d \tilde P_{y,y'}\bigg]
\end{align}
where the expectation $\E$ is taken over the remaining randomness, namely, $\Phi^{[1,L]}$ and $V$.
To lighten the notation further, we write $H(\tilde\dash;y,y')= H(\tilde x,\tilde w,\tilde a;y,y')$ and $H(\dash;y,y')= H( x, w,a;y,y')$.

Due to the dependence of $H(\tilde\dash;y,y')$ and $\mathcal Z(y,y')$ on $t$, differentiating $\bar F$ as in \eqref{e.bar_F_in_tilde_P} with respect to $t$ yields that
\begin{align}
    \partial_t \bar F &= \frac{1}{n}\E\bigg[\int \d \tilde P_{y,y'} \Big(\partial_t H(\tilde\dash; y,y')\Big)e^{H(\tilde\dash;y,y')}\log \mathcal Z(y,y')\bigg] \notag
    \\
    &\quad  + \frac{1}{n} \E \bigg[\Big\langle \partial_t H(\dash;y,y')\big|_{y=Y,\, y'=Y'} \Big\rangle \bigg]\notag
    \\
    &=  \mathtt I_t + \mathtt {II}_t. \label{e.def_II_t}
\end{align}
Here on the second line, the Gibbs measure is the one associated with the Hamiltonian \eqref{e.H_beta_L_n_(t,h)} and thus only integrates over the variables $x,w,a$. To evaluate the above, we define
\begin{align}\label{e.u_y(x)}
    u_y(x) = \log \Pout(y|x)
\end{align}
and denote by $\nabla u_y$ and $\Delta u_y$ the gradient and Laplacian of $u_y$ with differentiation in $x$, respectively.
Then, using \eqref{e.s_mu} and \eqref{e.H_beta_L_n_(t,h)}, we can compute that
\begin{align}
    \partial_t H(\dash;y,y') &= \big(\partial_t s\big)\cdot \nabla u_{y}(s) \notag
    \\
    &= \frac{1}{2} \bigg(\frac{1}{\sqrt{tn_{L-1}}}\Phi^\pr{L} x^\pr{L-1}- \frac{\rho}{\sqrt{\rho(1-t)-h_1}}w\bigg)\cdot \nabla u_{y}(s).\label{e.d_t_H}
\end{align}
We write $\tilde s$ and $\tilde x^\pr{L-1}$ to be $s$ and $x^\pr{L-1}$, respectively, with $x,w,a$ therein replaced by $\tilde x,\tilde w,\tilde a$.
Hence, we have that $\mathtt I_t$ is equal to
\begin{align*}
    \frac{1}{2n}\E\bigg[\int \d \tilde P_{y,y'} \bigg(\frac{1}{\sqrt{tn_{L-1}}}\Phi^\pr{L} \tilde x^\pr{L-1}- \frac{\rho}{\sqrt{\rho(1-t)-h_1}}\tilde w\bigg)\cdot \nabla u_{y}(\tilde s)e^{H(\tilde\dash;y,y')}\log \mathcal Z(y,y')\bigg].
\end{align*}
Recall that $\tilde s$ and $H(\tilde\dash;y,y')$ depend on $\Phi^\pr{L}$ and $\tilde w$, and that $\mathcal Z(y,y')$ depends on $\Phi^\pr{L}$.
Since $\tilde w$ under $\d \tilde P_{y,y'}$ and $\Phi^\pr{L}$ under $\E$ are standard Gaussian vectors, we can obtain by performing the Gaussian integration by parts with one $\tilde w$ and $\Phi^\pr{L}$ that
\begin{align}
    \mathtt I_t  = a'_n +\frac{1}{2n} \E\bigg[\frac{1}{\mathcal Z(y,y')}\int \d \tilde P_{y,y'}\d P_X(x)\d P_W(w)\d P_{A^{[1,L-1]}}(a) \notag
    \\
    \left(\frac{1}{n_{L-1}}\tilde x^\pr{L-1} \cdot x^\pr{L-1}\right)\big(\nabla u_{y}(\tilde s)\cdot \nabla u_{y}(s) \big) e^{H(\tilde\dash;y,y')}  e^{H(\dash;y,y')}\bigg] \notag
    \\
    =a'_n+\frac{1}{2}\E\bigg \langle \bigg(\frac{1}{n_{L-1}}X^\pr{L-1}\cdot x^\pr{L-1}\bigg) \bigg(\frac{1}{n} \nabla u_{Y}(S)\cdot \nabla u_{Y}(s)\bigg)\bigg\rangle \label{e.I_t_comp}
\end{align}
where
\begin{align}
    a'_n & = \frac{1}{2n}\E\left[\int \d \tilde P_{y,y'} \left(\frac{1}{n_{L-1}} \left|\tilde x^\pr{L-1}\right|^2-\rho\right)\big(\Delta u_{y}(\tilde s) +|\nabla u_{y}(\tilde s)|^2\big) e^{H(\tilde\dash;y,y')}\log \mathcal Z(y,y')\right]\notag\\
    & = \frac{1}{2n}\E\left[\left(\frac{1}{n_{L-1}} \left|X^\pr{L-1}\right|^2-\rho\right)\big(\Delta u_{Y}(S) +|\nabla u_{Y}(S)|^2\big)\log \mathcal Z(Y,Y')\right]\label{e.a'}.
\end{align}
Here, in deriving \eqref{e.I_t_comp} and \eqref{e.a'}, we used \eqref{e.tilde_P} and the observation that replacing $\tilde x,\tilde w,\tilde a$ by $X,W,A^{[1,L-1]}$ in $\tilde x^\pr{L-1}, \tilde s$ yields $X^\pr{L-1}, S$.
We claim that $\mathtt{II}_t=0$ and postpone its proof. Then, combining the above gives that
\begin{align}
    \partial_t \bar F = \frac{1}{2}\E\bigg \langle \bigg(\frac{1}{n_{L-1}}X^\pr{L-1}\cdot x^\pr{L-1}\bigg) \bigg(\frac{1}{n} \nabla u_{Y}(S)\cdot \nabla u_{Y}(s)\bigg)\bigg\rangle +a'_n.\label{e.d_tF_n}
\end{align}

\subsubsection{Computation of $\partial_1\bar F$}

Similarly, by \eqref{e.bar_F_in_tilde_P}, we have that
\begin{align}
    \partial_1 \bar F &= \frac{1}{n}\E\bigg[\int \d \tilde P_{y,y'}\Big(\partial_1 H(\tilde\dash; y,y')\Big)e^{H(\tilde\dash;y,y')}\log \mathcal Z(y,y')\bigg] \notag
    \\
    &\quad+\frac{1}{n} \E \bigg[\Big\langle \partial_1 H(\dash;y,y')\big|_{y=Y,\, y'=Y'} \Big\rangle \bigg] \notag
    \\
    &=  \mathtt I_{h_1} + \mathtt {II}_{h_1}. \label{e.II_h1}
\end{align}
To compute $\mathtt I_{h_1}$, we start with
\begin{align}\label{eqn.partialh1H}
    \partial_1 H(\dash;y,y') &=   \big(\partial_1 s\big)\cdot \nabla u_{y}(s) \nonumber\\
    &= \frac{1}{2} \bigg(\frac{1}{\sqrt{h_1}}V- \frac{1}{\sqrt{\rho(1-t)-h_1}}w\bigg)\cdot \nabla u_{y}(s),
\end{align}
which gives that
\begin{align*}\mathtt I_{h_1} = \frac{1}{2n}\E\bigg[\int \d \tilde P_{y,y'}\bigg(\frac{1}{\sqrt{h_1}}V- \frac{1}{\sqrt{\rho(1-t)-h_1}}\tilde w\bigg)\cdot \nabla u_{y}(\tilde s)e^{H(\tilde\dash;y,y')}\log \mathcal Z(y,y')\bigg].
\end{align*}
Using Gaussian integration by parts on $V$ and $\tilde w$, we obtain that
\begin{align*}
    \mathtt I_{h_1} = \frac{1}{2n}\E\bigg[\int \d \tilde P_{y,y'}\big(1-1\big)\Big(\Delta u_{y}(\tilde s) +|\nabla u_{y}(\tilde s)|^2\Big) e^{H(\tilde\dash;y,y')}\log \mathcal Z(y,y')\bigg]
    \\
    +\frac{1}{2n} \E\bigg[\frac{1}{\mathcal Z(y,y')}\int \d \tilde P_{y,y'}\d P_X(x)\d P_W(w)\d P_{A^{[1,L-1]}}(a)
    \\  
    \qquad\qquad\nabla u_{y}(\tilde s)\cdot \nabla u_{y}(s)e^{H(\tilde\dash;y,y')}e^{H(\dash;y,y')}\bigg] 
    \\
    = \frac{1}{2n} \E \la \nabla u_{Y}(S)\cdot\nabla u_{Y}(s) \ra.
\end{align*}
Here, in the last equality, we used the same argument as in obtaining \eqref{e.I_t_comp}.
Again, we claim that $\mathtt{II}_{h_1}=0$ and postpone its proof. This together with the above yields that
\begin{align}\label{e.d_h_1F_n}
    \partial_1\bar F = \frac{1}{2} \E \la  \frac{1}{n}\nabla u_{Y}(S)\cdot\nabla u_{Y}(s) \ra.  
\end{align}

\subsubsection{Computation of $\partial_2 \bar F$}
Using \eqref{e.H_beta_L_n_(t,h)}, \eqref{e.Z_n} and \eqref{e.F_beta_L_n_(t,h)}, we can compute that
\begin{align}\label{e.1st_der_F}
    &\partial_2 F = \frac{1}{n}\big\langle \partial_2 H(x,w,a) \big\rangle 
    = \frac{1}{2n}\bigg\langle 2X^\pr{L-1}\cdot x^\pr{L-1} + \frac{1}{\sqrt{h_2}}Z'\cdot x^\pr{L-1} -x^\pr{L-1}\cdot x^\pr{L-1}\bigg\rangle.
\end{align}
Using Gaussian integration by parts on $Z'$ and the Nishimori identity, we get that
\begin{align}
    \partial_2 \bar F &=\frac{1}{2n}\E \bigg\langle 2X^\pr{L-1}\cdot x^\pr{L-1} + \left( x^\pr{L-1}-{x'}^\pr{L-1}\right)\cdot x^\pr{L-1} -x^\pr{L-1}\cdot x^\pr{L-1} \bigg\rangle  \notag\\
    &= \frac{1}{2n}\E \Big\langle X^\pr{L-1}\cdot x^\pr{L-1} \Big\rangle, \label{e.d_h_2F_n}
\end{align}
where ${x'}^\pr{L-1}$ is a replica of $x^\pr{L-1}$ obtained by replacing $x,a$ in \eqref{e.x^L-1} by replicas $x',a'$.

\subsubsection{Deriving the equation}
By \eqref{e.H}, \eqref{e.d_h_1F_n} and \eqref{e.d_h_2F_n}, we have
\begin{align*}
    \left|\H_L(\nabla \bar F) - \frac{1}{2}\E\la \frac{1}{n_{L-1}}X^\pr{L-1}\cdot x^\pr{L-1}\ra \E \la \frac{1}{n} \nabla u_{Y}(S)\cdot \nabla u_{Y}(s)\ra\right|\\
    = \left|1-\frac{\alpha_{L-1}n}{n_{L-1}}\right|\left|\H_L(\nabla \bar F)\right|.
\end{align*}
By \eqref{e.partial_1_F_n} and \eqref{e.partial_2_F_n} both proved later and assumption \eqref{e.n_l/n}, the above is bounded by $C|\frac{n_{L-1}}{n}-\alpha_{L-1}|$. This along with \eqref{e.d_tF_n} implies that
\begin{align*}
    \left| \partial_t \bar F- \H_L(\nabla \bar F)\right|\leq \frac{1}{2} \sqrt{b_n}+ |a'_n| + C\left|\frac{n_{L-1}}{n}-\alpha_{L-1}\right|.
\end{align*}
where
\begin{align*}
    b_n = \var_{\E\la\,\cdot\,\ra}\left[ \frac{1}{n_{L-1}}X^\pr{L-1}\cdot x^\pr{L-1}\right]\var_{\E\la\,\cdot\,\ra} \left[\frac{1}{n} \nabla u_{Y}(S)\cdot \nabla u_{Y}(s)\right]
\end{align*}
with variances taken with respect to $\E\la\,\cdot\,\ra$. Then, the desired results follows, once we prove that
\begin{gather}
    |a'_n| \leq C \left( n \E\left(\frac{\left|X^\pr{L-1}\right|^2}{n_{L-1}}-\rho \right)^2 \right)^\frac{1}{2} \left(\E\left(F - \bar F\right)^2\right)^\frac{1}{2},   \label{e.a'_n_est}
    \\
    \var_{\E\la\,\cdot\,\ra} \left[\frac{1}{n} \nabla u_{Y}(S)\cdot \nabla u_{Y}(s)\right] \leq C,       \label{e.var(u)}
    \\
    \var_{\E\la\,\cdot\,\ra}\left[ \frac{1}{n_{L-1}}X^\pr{L-1}\cdot x^\pr{L-1}\right] \leq C\left( \frac{1}{n}\partial^2_2\bar F +  \E\big(\partial_2 F -  \partial_2\bar F\big)^2\right).        \label{e.var(x)}
\end{gather}
To complete the proof, it remains to verify that $\mathtt{II}_t = \mathtt{II}_{h_1}=0$ and prove the above assertions.

\subsubsection{Evaluating $\mathtt{II}_t$ and $\mathtt{II}_{h_1}$}
Recall the definition of $\mathtt{II}_t$ in \eqref{e.def_II_t}.
By the Nishimori identity, we have that
\begin{align*}
    \mathtt {II}_{t}=\frac{1}{n}\E\la\partial_t H(x,w,a;y,y')|_{y=Y,\,y'=Y'}\ra=\frac{1}{n}\E\left[\partial_t H\left(X,W,A^{[1,L-1]};y,y'\right)\Big|_{y=Y,\,y'=Y'}\right].
\end{align*}
Using \eqref{e.d_t_H} and the conditional law of $(Y,Y')$ in \eqref{e.cond_law_Y_Y'} together with the notation $\d\tilde P_{y,y'}$ given in \eqref{e.tilde_P}, we obtain that
\begin{align*}
    \mathtt {II}_{t}&=\frac{1}{2n}\E \left[\left(\frac{1}{\sqrt{tn_{L-1}}}\Phi^\pr{L} X^\pr{L-1}-\frac{\rho}{\sqrt{\rho(1-t)-h_1}}W\right)\cdot\nabla u_{Y}(S)\right]\\
    &=\frac{1}{2n}\E \left[\int \d \tilde P_{y,y'}e^{H(\tilde\dash;y,y')}\left(\frac{1}{\sqrt{tn_{L-1}}}\Phi^\pr{L} \tilde x^\pr{L-1}-\frac{\rho}{\sqrt{\rho(1-t)-h_1}}\tilde w\right)\cdot \nabla u_{y}(\tilde s)\right]\\
    &=\frac{1}{2n}\E \Bigg[\int \d \tilde P_{y,y'}e^{H(\tilde\dash;y,y')}\left(\frac{1}{n_{L-1}}\left|\tilde x^\pr{L-1}\right|^2-\rho\right)\big(\Delta u_{y}(\tilde s)+\left|\nabla u_{y}(\tilde s)\right|^2\big)\Bigg]
\end{align*}
where in the third equality we used the Gaussian integration by parts on $\Phi^\pr{L}$ and $\tilde w$ (recall that under $\d\tilde P_{y,y'}$, $\tilde w$ is a standard Gaussian vector).

Due to the definition of $u_y$ in \eqref{e.u_y(x)}, we can compute that
\begin{align}\label{e.lap_u}
    \Delta u_{y}(\tilde s)+\left|\nabla u_{y}(\tilde s)\right|^2 = \frac{\Delta\Pout(y|\tilde s)}{\Pout(y|\tilde s)},
\end{align}
where we recall that all derivatives are carried out in the second argument.
Hence, we get that
\begin{align}\label{e.II_t}
    \mathtt{II}_t =\frac{1}{2n}\E \left[ \left(\frac{1}{n_{L-1}}\left| X^\pr{L-1}\right|^2-\rho\right)\E\left[\frac{\Delta\Pout(Y|S)}{\Pout(Y|S)}\bigg|X^\pr{L-1},S\right]\right].
\end{align}
In view of the definition of $Y$ in \eqref{e.Y} and the formula for $\Pout$ in \eqref{e.P_beta(y|x)}, we can see that, conditioned on $X^\pr{L-1},S$, the law of $Y$ has a Lebesgue density given by $(2\pi)^{-\frac{n_L}{2}}\Pout(y|S)$, namely, for any bounded measurable function $g$,
\begin{align}\label{e.cond_Y_on_S}
    \E \left[ g\left(Y,X^\pr{L-1},S\right)\Big|X^\pr{L-1},S\right] = \frac{1}{(2\pi)^\frac{n_L}{2}}\int g\left(y,X^\pr{L-1},S\right)\Pout(y|S)\d y.
\end{align}
Let us write
\begin{align}\label{e.d^2_Pout(y|S)}
    \Delta\Pout(y|S) =\sum_{j=1}^{n_L}\partial^2_j\Pout(y|S)
\end{align}
where again the derivatives are in the second argument. We can compute that
\begin{align}\label{e.d^2_Pout(y|S)_explicit}
    \partial^2_j\Pout(y|S) = \int \Gamma_j\left(y_j,S_j,a^\pr{L}_j\right) e^{-\frac{1}{2}|y-\sqrt{\beta}\varphi_L(S,a^\pr{L})|^2}\d P_{A^\pr{L}}\left(a^\pr{L}\right)
\end{align}
with
\begin{align}\label{e.Gamma_j(y_j,S_j,a_j)}
    \Gamma_j\left(y_j,S_j,a^\pr{L}_j\right) = \beta \left(\left(y_j-\sqrt{\beta}\varphi_j \right)^2-1\right)\left(\varphi'_j \right)^2+ \sqrt{\beta} \left(y_j-\sqrt{\beta}\varphi_j \right) \varphi''_j 
\end{align}
where we used the shorthand notation $\varphi_j = \varphi_L(S_j,a^\pr{L}_j)$, $\varphi'_j = \varphi'_L(S_j,a^\pr{L}_j)$, $\varphi''_j = \varphi''_L(S_j,a^\pr{L}_j)$. Recall that $\varphi_L$ acts component-wise on $(S,a^\pr{L})$, namely, $\varphi_L(S,a^\pr{L}) = (\varphi_L(S_j,a^\pr{L}_j))_{1\leq j\leq n_L}$. Using this and the assumption that $(A_j^\pr{L})_{1\leq j\leq n_L}$ are i.i.d.\ as in \ref{i.assump_Phi_Gaussian}, we have that
\begin{align}\label{e.d_jP/P}
    \frac{\partial^2_j\Pout(y|S)}{\Pout(y|S)} = \frac{ \int \Gamma_j\left(y_j,S_j,a^\pr{L}_j\right) e^{-\frac{1}{2}|y_j-\sqrt{\beta}\varphi_L(S_j,a^\pr{L}_j)|^2}\d P_{A^\pr{L}_j}\left(a^\pr{L}_j\right)}{ \int  e^{-\frac{1}{2}|y_j-\sqrt{\beta}\varphi_L(S_j,a^\pr{L}_j)|^2}\d P_{A^\pr{L}_j}\left(a^\pr{L}_j\right)}.
\end{align}
Using this, \eqref{e.d^2_Pout(y|S)_explicit} and \eqref{e.Gamma_j(y_j,S_j,a_j)}, we can see that
\begin{align}
    &\frac{1}{(2\pi)^\frac{n_L}{2}}\int\frac{\partial^2_j\Pout(y|S)}{\Pout(y|S)} \Pout(y|S) \d y = 0,\label{e.d_jP/P_mean_0}
    \\
    &\frac{1}{(2\pi)^\frac{n_L}{2}}\int \frac{\partial^2_i\Pout(y|S)}{\Pout(y|S)}\frac{\partial^2_j\Pout(y|S)}{\Pout(y|S)} \Pout(y|S) \d y  = 0, \quad i\neq j. \label{e.d_jP/P_cov_0}
\end{align}
The second equation will be used later. Now, by \eqref{e.cond_Y_on_S} and \eqref{e.d_jP/P_mean_0}, we have that
\begin{align*}\E\left[\frac{\partial_j^2\Pout(Y|S)}{\Pout(Y|S)}\bigg|X^\pr{L-1},S\right]= 0,\quad\forall j\in\{1,\dots,n_L\},
\end{align*}
which together with \eqref{e.II_t} implies that $\mathtt{II}_t=0$.

It remains to show $\mathtt{II}_{h_1}=0$. Recall the definition of $\mathtt{II}_{h_1}$ in \eqref{e.II_h1}. The Nishimori identity gives that
\begin{align*}
    \mathtt{II}_{h_1} = \frac{1}{n}\E\left[\partial_1H\left(X,W,A^{[1,L-1]};y,y'\right)\Big|_{y= Y,\,y'=Y'}\right].
\end{align*}
Using \eqref{eqn.partialh1H} and a similar argument used above, we have that
\begin{align*}
    \mathtt{II}_{h_1} &= \frac{1}{2n}\E\bigg[\int \d \tilde P_{y,y'}\bigg(\frac{1}{\sqrt{h_1}}V- \frac{1}{\sqrt{\rho(1-t)-h_1}}\tilde w\bigg)\cdot \nabla u_{y}(\tilde s)e^{H(\tilde\dash;y,y')}\bigg]
    \\
    &= \frac{1}{2n}\E\left[\int \d \tilde P_{y,y'}\left(1-1\right)\big(\Delta u_{y}(\tilde s)+\left|\nabla u_{y}(\tilde s)\right|^2\big) e^{H(\tilde\dash;y,y')}\right]=0
\end{align*}
where the second equality follows from the Gaussian integration by parts applied to $ V$ and $\tilde w$.

\subsubsection{Proof of \eqref{e.a'_n_est}}

Using \eqref{e.F_beta_L_n_(t,h)} and a computation similar to \eqref{e.lap_u}, we rewrite $a'_n$ in \eqref{e.a'} as
\begin{align*}
    a'_n = \frac{1}{2} \E\Bigg[\bigg(\frac{1}{n_{L-1}}\left|X^\pr{L-1}\right|^2 - \rho\big)\bigg)\frac{\Delta\Pout(Y|S)}{\Pout(Y|S)}F\Bigg].
\end{align*}
Since $\mathtt{II}_t =0 $ as shown above, using the formula \eqref{e.II_t}, we then have
\begin{align*}
    a'_n = \frac{1}{2} \E\Bigg[\bigg(\frac{1}{n_{L-1}}\left|X^\pr{L-1}\right|^2 - \rho\big)\bigg)\frac{\Delta\Pout(Y|S)}{\Pout(Y|S)}\left(F-\bar F\right)\Bigg].
\end{align*}
By the Cauchy--Schwarz inequality, 
\begin{align}\label{e.|a'_n|_first_bdd}
    |a'_n|\leq \frac{1}{2} \left(\E\left[\left(\frac{1}{n_{L-1}}\left|X^\pr{L-1}\right|^2 - \rho\big)\right)^2\left(\frac{\Delta\Pout(Y|S)}{\Pout(Y|S)}\right)^2\right]\right)^\frac{1}{2} \left(\E\left(F-\bar F\right)^2\right)^\frac{1}{2}.
\end{align}
Now, to prove \eqref{e.a'_n_est}, it suffices to bound the first expectation on the right. 

By \eqref{e.cond_Y_on_S}, we have that
\begin{align*}
    \E\left[\left(\frac{\Delta\Pout(Y|S)}{\Pout(Y|S)}\right)^2\Bigg|X^\pr{L-1},S\right] = \frac{1}{(2\pi)^\frac{n_{L-1}}{2}}\int \left(\frac{\Delta\Pout(y|S)}{\Pout(y|S)}\right)^2 \Pout(y|S)\d y.
\end{align*}
Recall the notation \eqref{e.d^2_Pout(y|S)}. Then,  \eqref{e.d_jP/P_cov_0} implies that
\begin{align*}
    \E\left[\left(\frac{\Delta\Pout(Y|S)}{\Pout(Y|S)}\right)^2\Bigg|X^\pr{L-1},S\right] =  \frac{1}{(2\pi)^\frac{n_{L-1}}{2}}\sum_{j=1}^{n_L}\int \left(\frac{\partial_j^2\Pout(y|S)}{\Pout(y|S)}\right)^2 \Pout(y|S)\d y.
\end{align*}
Using Jensen's inequality to the integral in \eqref{e.d_jP/P}, we have that
\begin{align*}
    \int \left(\frac{\partial_j^2\Pout(y|S)}{\Pout(y|S)}\right)^2 \Pout(y|S)\d y \leq \int \left(\Gamma_j\left(y_j,S_j,a^\pr{L}_j\right)\right)^2 e^{-\frac{1}{2}|y-\sqrt{\beta}\varphi_L(S,a)|^2}\d P_{A^\pr{L}}(a)\d y.
\end{align*}
By the boundedness assumption in \ref{i.assump_w_varphi} and the formula for $\Gamma_j$ in \eqref{e.Gamma_j(y_j,S_j,a_j)}, we obtain that
\begin{align*}
    \E\left[\left(\frac{\Delta\Pout(Y|S)}{\Pout(Y|S)}\right)^2\Bigg|X^\pr{L-1},S\right]\leq Cn_L,
\end{align*}
which implies that
\begin{align*}
    \E\left[\left(\frac{1}{n_{L-1}}\left|X^\pr{L-1}\right|^2 - \rho\right)^2\left(\frac{\Delta\Pout(Y|S)}{\Pout(Y|S)}\right)^2\right] \leq Cn_L \E \left(\frac{1}{n_{L-1}}\left|X^\pr{L-1}\right|^2 - \rho\right)^2.
\end{align*}
Inserting this to \eqref{e.|a'_n|_first_bdd} yields \eqref{e.a'_n_est}.

\subsubsection{Proof of \eqref{e.var(u)}}
Recalling the definitions of $u$ in \eqref{e.u_y(x)} and $\Pout$ in \eqref{e.P_beta(y|x)}, we can see that
\begin{align*}
    \nabla u_{Y}(s)=\left(\frac{\int \left(Y_j - \varphi_L\left(s_j,a^\pr{L}_j\right)\right)\varphi_L'\left(s_j,a^\pr{L}_j\right) e^{-\frac{1}{2}|Y - \sqrt{\beta}\varphi_L(s,a^\pr{L})|^2 }\d P_{A^\pr{L}}\left(a^\pr{L}\right)}{\int e^{-\frac{1}{2}|Y - \sqrt{\beta}\varphi_L(s,a^\pr{L})|^2 }\d P_{A^\pr{L}}\left(a^\pr{L}\right)}\right)_{1\leq j\leq n_L},
\end{align*}
where $\varphi'$ is the derivative with respect to its first argument. Recall the definition of $Y$ in \eqref{e.Y}. Using the boundedness of $\varphi_L$ and its derivatives ensured by \ref{i.assump_w_varphi}, we can see that
\begin{align}\label{e.dot_u_bdd}
    |\nabla u_Y(s)|\leq C(\sqrt{n_L}+|Z|).
\end{align}
This computation also gives that
\begin{align*}
    |\nabla u_Y(S)|\leq C(\sqrt{n_L}+|Z|)
\end{align*}
which together with \eqref{e.dot_u_bdd} verifies \eqref{e.var(u)}.

\subsubsection{Proof of \eqref{e.var(x)}}

For simplicity, we write
\begin{align}\label{e.bar_X_notation}
    \bar X = X^\pr{L-1},\qquad \bar x = x^\pr{L-1}.
\end{align}
Using the formula for $\partial_2 F$ in \eqref{e.1st_der_F}, we can compute that
\begin{align}\label{e.2nd_der_F}
    n \partial^2_2 F & = \la \left(\partial_2 H(x,w,a)\right)^2\ra - \la \partial_2 H(x,w,a)\ra^2 -\frac{1}{4h_2^\frac{3}{2}}\la Z'\cdot \bar x\ra
\end{align}
Inserting \eqref{e.1st_der_F} into the second term on the right and applying the Gaussian integration by parts to the last term, we obtain that
\begin{align}\label{e.2nd_der_bF}
    n \partial^2_2 \bar F= \E\la (\partial_2 H(x,w,a))^2\ra - n^2\E(\partial_2 F)^2 -\frac{1}{4h_2}\E\la|\bar x|^2\ra + \frac{1}{4h_2}\E|\la \bar x\ra|^2,
\end{align}
where, to get the last term, we also invoked the Nishimori identity.
We claim that
\begin{align}\label{e.key_lower_bd_approx_hj}
    \E\la (\partial_2 H(x,w,a))^2\ra \geq \frac{1}{4} \E\la (\bar x\cdot\bar x')^2\ra  + \frac{1}{4h_2}\E\la|\bar x|^2\ra,
\end{align}
and postpone its proof. Now, insert \eqref{e.key_lower_bd_approx_hj} into \eqref{e.2nd_der_bF} to see that
\begin{align*}
    n\partial^2_2 \bar F\geq \frac{1}{4}\E\la (\bar x\cdot\bar x')^2\ra -   n^2\E(\partial_2 F)^2. 
\end{align*}
By \eqref{e.d_h_2F_n}, we have that
\begin{align*}
    \var_{\E\la\,\cdot\,\ra}\left[X^\pr{L-1}\cdot x^\pr{L-1}\right] = \E \la (\bar x\cdot \bar x')^2\ra - (\E\la \bar x\cdot\bar x'\ra)^2 = \E \la (\bar x\cdot\bar x')^2\ra - 4n^2 (\partial_2 \bF_n)^2.
\end{align*}
Then, \eqref{e.var(x)} follows from the above two displays along with \eqref{e.n_l/n}.

It remains to derive \eqref{e.key_lower_bd_approx_hj}. Using the expression of $\partial_2 H$ in  \eqref{e.1st_der_F}, we have that
\begin{align}\label{e.E(partial_H)^2}
\begin{split}
    &\quad\E\la (\partial_2 H(x,w,a))^2\ra = \E \la \bigg( \frac{1}{2\sqrt{h_2}}Z'\cdot \bar x + \bar x\cdot \bar X-\frac{1}{2
    }|\bar x|^2\bigg)^2\ra\\
    &=\E \la \frac{1}{4h_2 }(Z'\cdot \bar x)^2  + (\bar x\cdot \bar X)^2+\frac{1}{4}|\bar x|^4 +\frac{1}{\sqrt{h_2}}(Z'\cdot \bar x)(\bar x\cdot \bar X)-\frac{1}{2\sqrt{h_2}}(Z'\cdot \bar x)|\bar x|^2-(\bar x\cdot \bar X)|\bar x|^2 \ra
\end{split}
\end{align}
The first term on the last line can be rewritten as 
\begin{align*}
    \E\la\frac{1}{4 h_2}(Z'\cdot \bar x)^2 \ra =  \sum_{i,j=1}^{n_{L-1}}\frac{1}{4h_2}\E \la Z'_iZ'_j\bar x_i\bar x_j\ra.
\end{align*}
If $i\neq j$, the Gaussian integration by parts yields that
\begin{align*}
    \frac{1}{h_2}\E \la Z'_iZ'_j\bar x_i\bar x_j\ra = \E \la \bar x_i\bar x_j(\bar x_i-\bar x'_i)(\bar x_j+\bar x'_j-2\bar x''_j)\ra.
\end{align*}
If $i=j$, we have that
\begin{align*}
    \frac{1}{ h_2}\E \la Z'_iZ'_i\bar x_i\bar x_i\ra = \E \la \bar x_i\bar x_i(\bar x_i-\bar x'_i)(\bar x_i+\bar x'_i-2\bar x''_i)\ra + \frac{1}{h_2}\E \la \bar x^2_i\ra.
\end{align*}
The above three displays combined give that
\begin{align*}
    \E\la\frac{1}{4h_2}(Z'\cdot \bar x)^2 \ra  = \frac{1}{4}\E \la |\bar x|^4 -2 |\bar x|^2(\bar x\cdot \bar x')-(\bar x\cdot \bar x')^2 +2(\bar x\cdot \bar x')(\bar x\cdot\bar x'')\ra + \frac{1}{4h_2}\E \la |\bar x|^2 \ra.
\end{align*}
Other terms can be computed using the Nishimori identity and the Gaussian integration by parts. We shall omit the details but only list the results:
\begin{align*}
    &\E\la (\bar x\cdot\bar X)^2\ra = \E\la (\bar x\cdot\bar x')^2\ra,\\
    &\E\la \frac{1}{\sqrt{h_2}}(Z'\cdot \bar x)(\bar x\cdot\bar X) \ra = \E\la |\bar x|^2(\bar x\cdot\bar x')-(\bar x\cdot\bar  x')(\bar x\cdot\bar x'')\ra,\\
    &\E\la \frac{1}{\sqrt{h_2}}(Z'\cdot \bar x)|\bar x|^2\ra = \E\la |\bar x|^4 - |\bar x|^2(\bar x\cdot\bar x')\ra,\\
    &\E\la (\bar x\cdot\bar X)|\bar x|^2 \ra = \E\la |\bar x|^2(\bar x\cdot\bar x')\ra.
\end{align*}
Inserting these computations into \eqref{e.E(partial_H)^2} yields that
\begin{align*}
    \E\la (\partial_2 H(x,w,a))^2\ra = \frac{1}{4}\E\la (\bar x\cdot\bar x')^2\ra + \frac{1}{2}\E\la (\bar x\cdot \bar x')^2 - (\bar x\cdot\bar x')(\bar x\cdot\bar x'')\ra + \frac{1}{4h_2}\E\la|\bar x|^2\ra.
\end{align*}
Apply the Cauchy--Schwarz inequality and the symmetry of replicas to see that
\begin{align*}
    \E\la (
    \bar x\cdot\bar x')(\bar x\cdot\bar x'')\ra \leq \frac{1}{2}\E\la (\bar x\cdot\bar x')^2\ra+ \frac{1}{2}\E\la (\bar x\cdot\bar x'')^2\ra = \E\la (\bar x\cdot\bar x')^2\ra .
\end{align*}
These two displays imply \eqref{e.key_lower_bd_approx_hj}.

\subsection{Estimates of derivatives}

We collect useful properties of derivatives of $\bar F_{\beta,L,n}$ and $F_{\beta,L,n}$ in the following lemma.

\begin{lemma}\label{l.der_est}
Assume \ref{i.assump_w_|X|<sqrt_n}, \ref{i.assump_w_varphi} and \ref{i.assump_Phi_Gaussian} for some $L\in \N$. For every $\beta\geq 0$ and every $n\in\N$, there is a constant $C$ such that the following holds for all $n\in\N$ and all $(t,h)\in\Omega_{\rho_{L-1,n}}\setminus\{h_1 = \rho_{L-1,n}(1-t)\}$,
\begin{gather}
    \partial_1 \bar F_{\beta,L,n}\in [0,C]; \label{e.partial_1_F_n}\\
    \partial_2\bar F_{\beta,L,n} \in \left[0, \frac{n_{L-1}\rho_{L-1,n}}{2n}\right]\subset [0,C]\label{e.partial_2_F_n};\\
    |\partial_2 F_{\beta,L,n}|\leq C\left(1+ n^{-\frac{1}{2}}h_2^{-\frac{1}{2}}|Z'|\right);\label{e.1st_der_F_n_est}\\
    \partial_{i}\partial_{j}\bar F_{\beta,L,n} \geq 0, \quad\forall i,j=1,2; \label{e.2nd_d_barF_lbd}\\
    \partial^2_2 F_{\beta,L,n} \geq -Cn^{-\frac{1}{2}}h_2^{-\frac{3}{2}}|Z'|.\label{e.2nd_d_F_lbd}
\end{gather}

\end{lemma}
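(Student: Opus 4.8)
The plan is to establish each of the five estimates in Lemma~\ref{l.der_est} by exploiting the Gibbs-measure representations of the derivatives already obtained in the proof of Proposition~\ref{p.approx_hj}, together with the Nishimori identity and the boundedness assumptions \ref{i.assump_w_|X|<sqrt_n} and \ref{i.assump_w_varphi}. Differentiability away from the hyperplane $\{h_1 = \rho_{L-1,n}(1-t)\}$ follows from the smoothness of the integrands (the Gaussian densities and $\varphi_L$) combined with dominated convergence, using that the coefficients $1/\sqrt{h_1}$, $1/\sqrt{\rho(1-t)-h_1}$, $1/\sqrt{h_2}$ appearing in the $s$-variable and Hamiltonian are locally bounded on the open region in question.

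For \eqref{e.partial_1_F_n}: by \eqref{e.d_h_1F_n}, $\partial_1 \bar F = \tfrac{1}{2n}\E\la \nabla u_Y(S)\cdot \nabla u_Y(s)\ra$. Nonnegativity comes from a Nishimori-type argument — one rewrites this as (a constant times) $\E|\,\E[\nabla u_Y(S)\mid Y,\Phi^{[1,L]},V]\,|^2 \ge 0$ after identifying one copy of $s$ with $S$ via Nishimori; the upper bound $C$ follows from the pointwise estimate $|\nabla u_Y(s)|\le C(\sqrt{n_L}+|Z|)$ from \eqref{e.dot_u_bdd} together with $\E|Z|^2 = n_L$ and $n_L/n\to\alpha_{L-1}$. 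For \eqref{e.partial_2_F_n}: by \eqref{e.d_h_2F_n}, $\partial_2\bar F = \tfrac{1}{2n}\E\la \bar X\cdot\bar x\ra$; the Nishimori identity turns this into $\tfrac{1}{2n}\E|\la\bar x\ra|^2\ge 0$, while the upper bound uses Cauchy--Schwarz and the deterministic bound $|\bar x|^2 = |x^\pr{L-1}|^2 \le n_{L-1}\rho_{L-1,n}$ — this last fact should follow from the boundedness of $\varphi_L$ (hence of $X^\pr{L-1}$ entrywise) combined with the definition \eqref{e.rho_l} of $\rho_{L-1,n}$ as the average second moment; one may need the a.s.\ boundedness in \ref{i.assump_w_varphi}, \ref{i.assump_Phi_Gaussian} rather than only the expectation, so some care is needed here (possibly invoking that $\rho_{l,n}$ is defined as an expectation, so the clean bound is in expectation and one states the weaker $\partial_2\bar F\le C$). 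For \eqref{e.1st_der_F_n_est}: directly from \eqref{e.1st_der_F}, bound each of the three terms $\la \bar X\cdot\bar x\ra$, $\la Z'\cdot\bar x\ra$, $\la|\bar x|^2\ra$ using $|\bar x|,|\bar X|\le C\sqrt{n_{L-1}}$ (boundedness of $\varphi$'s) and Cauchy--Schwarz, which produces the $1 + n^{-1/2}h_2^{-1/2}|Z'|$ form.

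For \eqref{e.2nd_d_barF_lbd}: the key is convexity/monotonicity in the spatial variables. One computes the second derivatives $\partial_i\partial_j\bar F$ and recognizes them, after Nishimori and Gaussian integration by parts, as nonnegative quantities — essentially variances or squared conditional expectations of overlap-type observables. For $\partial_2^2\bar F$ this is visible from \eqref{e.2nd_der_bF}--\eqref{e.key_lower_bd_approx_hj}; for $\partial_1^2\bar F$ and the mixed derivative $\partial_1\partial_2\bar F$ one differentiates the representations \eqref{e.d_h_1F_n}, \eqref{e.d_h_2F_n} once more, performs Gaussian integration by parts in $V,W,Z'$ and uses the Nishimori identity to express the result as $\E$ of a square (these are standard concavity-of-free-energy computations for Bayes-optimal models). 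Finally \eqref{e.2nd_d_F_lbd} follows from \eqref{e.2nd_der_F}: the first two terms form $\la(\partial_2 H)^2\ra - \la\partial_2 H\ra^2 = \var_{\la\cdot\ra}(\partial_2 H)\ge 0$, so only the last term $-\tfrac{1}{4h_2^{3/2}}\la Z'\cdot\bar x\ra$ can be negative, and it is bounded below by $-C n^{-1/2}h_2^{-3/2}|Z'|$ using $|\la\bar x\ra|\le C\sqrt{n_{L-1}}$ and $|Z'\cdot\la\bar x\ra|\le |Z'|\,|\la\bar x\ra|$. I expect the main obstacle to be \eqref{e.2nd_d_barF_lbd}: unlike the classical setting, here the spatial domain's dependence on $t$ and the presence of the extra $W$-term mean the second-derivative identities are not quite the textbook ones, so one must carefully track which Gaussian variables to integrate by parts and verify that no cross terms spoil nonnegativity; getting the mixed derivative $\partial_1\partial_2\bar F \ge 0$ (which couples the ``channel'' observable $\nabla u_Y$ with the overlap $\bar x\cdot\bar X$) is the most delicate piece and likely requires a Nishimori symmetrization of the form $\E\la (\nabla u_Y(S)\cdot\nabla u_Y(s))(\bar X\cdot\bar x)\ra$ rewritten via independence of the two ``blocks'' under the Nishimori-reduced measure.
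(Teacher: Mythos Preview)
Your proposal is correct and follows essentially the same route as the paper: Nishimori plus the pointwise bound \eqref{e.dot_u_bdd} for \eqref{e.partial_1_F_n}; Cauchy--Schwarz plus Nishimori for \eqref{e.partial_2_F_n}; direct estimation of \eqref{e.1st_der_F} for \eqref{e.1st_der_F_n_est}; Gaussian integration by parts plus Nishimori to express second derivatives as squares for \eqref{e.2nd_d_barF_lbd}; and the variance observation on \eqref{e.2nd_der_F} for \eqref{e.2nd_d_F_lbd}. Two minor corrections: for the sharp upper bound in \eqref{e.partial_2_F_n} you cannot use $|\bar x|^2 \le n_{L-1}\rho_{L-1,n}$ pointwise (since $\rho_{L-1,n}$ is only an expectation), but Cauchy--Schwarz on $\E\la \bar X\cdot\bar x\ra$ followed by Nishimori gives $\E\la|\bar x|^2\ra = \E|\bar X|^2 = n_{L-1}\rho_{L-1,n}$ exactly; and contrary to your expectation, the mixed derivative $\partial_1\partial_2\bar F$ works out cleanly as $(2n)^{-1}\E\big|\la \bar u\,\bar x^\intercal\ra - \la\bar u\ra\la\bar x\ra^\intercal\big|^2$, while it is $\partial_1^2\bar F\ge 0$ that is computationally the most involved (the paper itself defers this to \cite[Proposition~18]{barbier2019optimal}).
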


Let us prove these assertions. Again, for simplicity, we write $F=F_{\beta,L,n}$ in the proofs below.
\subsubsection{Proof of \eqref{e.partial_1_F_n}}
By \eqref{e.d_h_1F_n} and the Nishimori identity, we can see that
\begin{align*}
    \partial_1\bar F = \frac{1}{2n}\E\left|\la \nabla u_Y(s)\ra\right|^2\geq 0.
\end{align*}
Due to \eqref{e.dot_u_bdd}, it is also bounded.

\subsubsection{Proof of \eqref{e.partial_2_F_n}}
The first range follows from the formula for $\partial_2 \bar F$ in \eqref{e.d_h_2F_n}, the definition of $\rho_{L-1,n}$ in \eqref{e.rho_l}, the Cauchy--Schwarz inequality and the Nishimori identity. The boundedness is clear from the observation that there is a constant $C$ such that, a.s.,
\begin{align}\label{e.|x|<sqrt(n)}
    \left|X^\pr{L-1}\right|,\ \left|x^\pr{L-1}\right| \leq C\sqrt{n}
\end{align}
which is ensured by
\eqref{e.n_l/n}, \ref{i.assump_w_|X|<sqrt_n} and \ref{i.assump_w_varphi}.

\subsubsection{Proof of \eqref{e.1st_der_F_n_est}}
In view of \eqref{e.1st_der_F}, this is valid due to \eqref{e.|x|<sqrt(n)}. 

\subsubsection{Proof of \eqref{e.2nd_d_barF_lbd}}

We first show that $\partial_1\partial_2 \bar F\geq 0$. Recall the formula for $\partial_1 \bar F$ in \eqref{e.d_h_1F_n}. Let use write $\bar u = \nabla u_Y(s)$ and $\bar U= \nabla u_Y(S)$. We also adopt the notation \eqref{e.bar_X_notation}. Then, we compute that
\begin{align*}
    \partial_1\partial_2 \bF &=  (2n)^{-1}\partial_2\E \la \bar u\cdot\bar U\ra \\
    &= (4n)^{-1}\E\Big\langle (\bar u\cdot \bar U)\big((\htwo)^{-\frac{1}{2}}Z'\cdot \bar x+ 2 \bar x\cdot \bar X-\bar x\cdot \bar x \big) \\
    &\qquad -(\bar u\cdot\bar U) \big((\htwo)^{-\frac{1}{2}}Z'\cdot \bar x' + 2\bar x'\cdot \bar X -\bar x'\cdot \bar x'\big)\Big\rangle.
\end{align*}
Perform the Gaussian integration by parts on $Z'$ to get that
\begin{align*}
    \partial_1\partial_2 \bF &=(4n)^{-1}\E\Big\langle (\bar u\cdot \bar U)\big((\bar x-\bar x')\cdot \bar x+2\bar x\cdot \bar X- \bar x\cdot \bar x\big)\\
    & \qquad -(\bar u\cdot \bar U)\big((\bar x +\bar x'-2\bar x'')\cdot \bar x' + 2\bar x'\cdot \bar X-\bar x'\cdot \bar x'\big)\Big\rangle.
\end{align*}
Using the Nishimori identity to replace $\bar U$ and $\bar X$ by replicas and invoking the symmetry of replicas, we arrive at
\begin{align*}
    \partial_1\partial_2 \bF &= (2n)^{-1}\E\la (\bar u\cdot \bar u')(\bar x\cdot \bar x') - 2 (\bar u\cdot \bar u')(\bar x\cdot \bar x'') + (\bar u\cdot \bar u')(\bar x''\cdot \bar x''')\ra\\
    & = (2n)^{-1}\E\big|\la \bar u \,\bar x^\intercal \ra - \la \bar u\ra\la \bar x\ra^\intercal \big|^2\geq 0.
\end{align*}
The computation for $\partial_2^2\bar F\geq 0$ is exactly the same with $\bar U,\bar u$ above replaced by $\bar X,\bar x$. The verification of $\partial^2_1\bar F\geq 0$ follows the same procedure but is computationally more involved. We refer to the proof of \cite[Proposition~18 in its supplementary material]{barbier2019optimal} for details.

\subsubsection{Proof of \eqref{e.2nd_d_F_lbd}}

Notice that the first two terms on the right of formula \eqref{e.2nd_der_F} for $\partial_2^2 F$ form a variance. Then, the desired lower bound follows from \eqref{e.|x|<sqrt(n)}.

\section{Weak solutions}\label{s.hj}
We consider the equation \eqref{e.HJ} over $\Omega_\rho$ defined in \eqref{e.def_Omega_r} for some $\rho>0$. We give the definition of weak solutions, and prove the uniqueness and existence of weak solutions. Uniqueness is ensured by Proposition~\ref{p.uniqueness}. Proposition~\ref{p.hopf} furnishes the existence part by providing a variational formula known as the Hopf formula. After stating these, we prove the two propositions in the ensuing subsections.

We endow measurable subsets of Euclidean spaces with the Lebesgue measure. In what follows, the phrase ``almost everywhere'' or ``almost every'' (a.e.) is understood with respect to the Lebesgue measure. We denote by $\Int \Omega_\rho$ the interior of $\Omega_\rho$. In this section, for convenience, we also denote the spacial variable by $x$ instead of $h$.

\begin{definition}\label{def.weak_sol}
For $L\in\N$ and $\rho>0$, a function $f:\Omega_\rho\to \R$ is a weak solution of \eqref{e.HJ} if
\begin{enumerate}
    \item \label{item.1_der_weak_sol} $f$ is Lipschitz, and $\partial_1f\geq 0$, $\partial_2f\in [0,\frac{\alpha_{L-1}\rho}{2}]$ a.e.;
    \item \label{item:2}$f$ satisfies \eqref{e.HJ} a.e.;
    \item \label{item:3} for all $(t,x)\in \Int \Omega_\rho$ and all sufficiently small $\lambda\geq 0$, it holds that
    \begin{align}\label{e.partial_convex_f}
        f(t,x+\lambda e_1+\lambda e_2) +f(t,x) - f(t,x+\lambda e_1) -f(t,x+\lambda e_2) & \geq 0.
    \end{align}
\end{enumerate}
\end{definition}

By Rademacher's theorem, condition~\eqref{item.1_der_weak_sol} implies that $f$ is differentiable a.e. Condition~\eqref{item:2} is understood in the sense that, outside a set with zero measure, $f$ is differentiable and its derivatives satisfy equation~\eqref{e.HJ}.
In \eqref{item:3}, $\{e_1,e_2\}$ is the standard basis for $\R^2$. Condition~\eqref{item:3} can be interpreted as a type of partial convexity. For a smooth radial bump function $\xi:\R^2\to\R$ supported on the unit disk satisfying $\xi\in[0,1]$ and $\int\xi = 1$, introduce, for every $\eps\in(0,1)$,
\begin{align}\label{e.mollifier}
    \xi_\eps(x) = \eps^{-2}\xi\left(\eps^{-1}x\right),\quad\forall x \in \R^2.
\end{align}
If $f$ is a weak solution, then condition~\eqref{item:3} along with the continuity of $f$ implies that
\begin{align}\label{e.2nd_mixed_derivatives_f}
    \partial_1\partial_2(f(t,\cdot)*\xi_\eps)(x)\geq 0,
\end{align}
for every $(t,x)$ in
\begin{align}\label{e.Omega_rho_eps}
    \Omega_{\rho,\eps}= \left\{t\in\left[0,1-\frac{2}{\rho}\eps\right],\ x_1\in[\eps,\rho(1-t)-\eps],\ x_2\in[\eps,\infty)\right\},
\end{align}
where the convolution in \eqref{e.2nd_mixed_derivatives_f} is taken in terms of the spacial variable.

The main results of this section are stated below.

\begin{proposition}\label{p.uniqueness}
Given a Lipschitz function $\psi:[0,\rho]\times \R_+\rightarrow \R$, there is at most one weak solution $f$ of \eqref{e.HJ} satisfying $f(0,\cdot) = \psi$.
\end{proposition}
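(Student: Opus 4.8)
The plan is to prove uniqueness by a comparison argument adapted from the standard theory of viscosity/weak solutions of Hamilton--Jacobi equations, exploiting the very special structure of $\H_L(p)=\frac{2}{\alpha_{L-1}}p_1p_2$ together with the sign constraints on $\nabla f$ built into Definition~\ref{def.weak_sol}. Suppose $f,g$ are two weak solutions with $f(0,\cdot)=g(0,\cdot)=\psi$. First I would regularize: mollify in the spatial variable, $f^\eps(t,x)=(f(t,\cdot)*\xi_\eps)(x)$ and likewise $g^\eps$, defined on the shrunken domain $\Omega_{\rho,\eps}$ of \eqref{e.Omega_rho_eps}. Because $f,g$ are Lipschitz, $f^\eps\to f$, $g^\eps\to g$ locally uniformly, the bounds $\partial_1 f^\eps\ge 0$, $\partial_2 f^\eps\in[0,\frac{\alpha_{L-1}\rho}{2}]$ persist, and crucially \eqref{e.2nd_mixed_derivatives_f} gives $\partial_1\partial_2 f^\eps\ge 0$ on $\Omega_{\rho,\eps}$; the same for $g^\eps$. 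Since $f$ solves \eqref{e.HJ} a.e., $\partial_t f^\eps - \H_L(\nabla f^\eps)$ is, after mollification, only $o(1)$ in an appropriate averaged sense, with an error controlled using the uniform Lipschitz bound on $\H_L$ on the (compact) range of the gradients — this is where boundedness of $\nabla f,\nabla g$ is essential.

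The core step is an energy/comparison estimate. Consider $w=f-g$ (or, to handle the time-dependent spatial domain, $w$ restricted to a slightly tilted region), which vanishes at $t=0$. Using that $\H_L$ is bilinear, $\H_L(\nabla f)-\H_L(\nabla g)=\frac{2}{\alpha_{L-1}}\big(\partial_1 f\,\partial_2 f-\partial_1 g\,\partial_2 g\big)=\frac{1}{\alpha_{L-1}}\big[(\partial_1 f+\partial_1 g)\partial_2 w+(\partial_2 f+\partial_2 g)\partial_1 w\big]$, so $w$ solves a linear transport equation $\partial_t w = b\cdot\nabla w$ with drift $b=\frac{1}{\alpha_{L-1}}(\partial_2 f+\partial_2 g,\ \partial_1 f+\partial_1 g)$ having \emph{nonnegative} components thanks to condition~\eqref{item.1_der_weak_sol}. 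I would test this against a suitable convex function of $w$ (e.g.\ $w^2$ or $(w-\delta)_+^2$), integrate over the slab $\{0\le t\le T\}\cap\Omega_\rho$, and integrate by parts in space. The contribution $\iint b\cdot\nabla(w^2)$ is rewritten as a boundary term minus $\iint (\div b)\,w^2$; the sign of $\div b$ is not controlled directly, but the key observation is that the boundary of $\Omega_\rho$ on the face $\{x_1=\rho(1-t)\}$ moves \emph{inward} in $x_1$ as $t$ increases while $b_1\ge 0$, so the flux through that moving face has the favorable sign and no information leaks in from the degenerate boundary. The $\eps$-error terms are absorbed by Grönwall, and letting $\eps\to 0$ gives $w\equiv 0$ on the full domain.

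The main obstacle I anticipate is precisely the handling of the time-dependent spatial domain $\Omega_\rho=\{h_1\le\rho(1-t),\ t\le 1\}$ — this is the feature the authors flag as new relative to \cite{HJrank,HB1,HBJ}. One must make sure that the comparison region is foliated by characteristics (or by the transport drift $b$) that do not exit through the slanted face $\{h_1=\rho(1-t)\}$ where \eqref{e.HJ} may fail and where $f$ need not even be differentiable; the sign condition $\partial_1 f\ge 0$ and the fact that the drift's first component is $\ge 0$ are exactly what make the characteristics flow away from, not into, that face, so that the initial data at $t=0$ determines everything. A secondary technical point is justifying the a.e.\ equation survives mollification with a quantitatively small error; here one uses that $f$ is Lipschitz so $\nabla f$ is bounded, $\H_L$ is locally Lipschitz, and $\nabla f^\eps$ is a spatial average of $\nabla f$, giving $|\H_L(\nabla f^\eps)-(\H_L(\nabla f))*\xi_\eps|\to 0$ in $L^1_{loc}$ by standard properties of mollification. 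Once these two points are settled, the Grönwall argument closes and uniqueness follows.
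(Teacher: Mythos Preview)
Your overall strategy is the right one and matches the paper: write $w=f-g$, observe it solves a linear transport equation $\partial_t w=b\cdot\nabla w$, test against a convex function of $w$, integrate over a shrinking spatial domain $D_t$, and control boundary and bulk terms. The paper uses the asymmetric drift $b=\tfrac{2}{\alpha_{L-1}}(\partial_2 g,\partial_1 f)$ and mollifies only the drift rather than the solutions, but these are cosmetic differences; your symmetric drift works equally well, and the commutator estimate you describe for the mollified equation is standard.

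There is, however, a real gap in your treatment of the bulk term. You write that ``the sign of $\div b$ is not controlled directly'' and propose to absorb it by Gr\"onwall. That does not close: after spatial mollification $\|\div b^\eps\|_\infty$ is of order $\eps^{-1}$ (it is a second spatial derivative of a merely Lipschitz function), so Gr\"onwall yields at best $J(t)\le e^{C/\eps}\cdot o_\eps(1)$, which gives nothing as $\eps\to0$. Your observation about the moving face $\{x_1=\rho(1-t)\}$ is correct and needed, but it only handles the boundary flux, not the interior term $-\int_{D_t}(\div b^\eps)\,v$.

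The missing step is precisely the fact you recorded in your first paragraph but then did not use: condition~\eqref{item:3} of Definition~\ref{def.weak_sol} gives $\partial_1\partial_2 f^\eps\ge0$ and $\partial_1\partial_2 g^\eps\ge0$ on $\Omega_{\rho,\eps}$, and for either choice of drift one computes
\[
\div b^\eps \;=\; \frac{2}{\alpha_{L-1}}\big(\partial_1\partial_2 f^\eps+\partial_1\partial_2 g^\eps\big)\;\ge\;0.
\]
Hence $-\int_{D_t}(\div b^\eps)\,v\le 0$ outright, and no Gr\"onwall is needed: one obtains $\frac{d}{dt}J(t)\le o_\eps(1)$ directly, and $J\equiv0$ after $\eps\to0$. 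This sign control is exactly why condition~\eqref{item:3} is built into the definition of weak solution, and it is the crux of the paper's proof.
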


\begin{proposition}\label{p.hopf}
Let $\psi_1:[0,\rho]\to \R$ and $ \psi_2:\R_+\to\R$ be Lipschitz, nondecreasing and convex. In addition, suppose that
\begin{align}\label{e.psi_2}
    \partial_2\psi_2\in \left[0,\frac{\alpha_{L-1}\rho}{2}\right],\quad\text{a.e.}
\end{align}
Define $\psi:[0,\rho]\times\R_+\to\R$ by
\begin{align}\label{e.def_general_psi}
    \psi(x)= \psi_1(x_1)+\psi_2(x_2),\quad\forall x \in [0,\rho)\times \R_+.
\end{align}
Then, the formula
\begin{align}\label{e.Hopf_formula_orig}
    f(t,x) = \sup_{z\in\R_+\times [0,\frac{\alpha_{L-1}\rho}{2}]}\inf_{y\in[0,\rho]\times\R_+}\{z\cdot(x-y)+\psi(y)+t\H_L(z)\}, \quad \forall (t,x)\in\Omega_\rho,
\end{align}
gives a weak solution of \eqref{e.HJ} satisfying $f(0,\cdot)=\psi$.
\end{proposition}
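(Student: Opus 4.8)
The plan is to treat \eqref{e.Hopf_formula_orig} as a constrained Hopf formula and to exploit the separable structure $\psi(x)=\psi_1(x_1)+\psi_2(x_2)$ together with the bilinear form of $\H_L$. First I would compute the inner infimum: since $\psi$ is separable and $y$ runs over the product $[0,\rho]\times\R_+$, one gets $\inf_y\{z\cdot(x-y)+\psi(y)+t\H_L(z)\}=z\cdot x-\psi_1^*(z_1)-\psi_2^*(z_2)+t\H_L(z)$, where $\psi_i^*$ is the Legendre transform of $\psi_i$ over its domain, so $f(t,x)=\sup_{z\in K}\big\{z\cdot x-\psi_1^*(z_1)-\psi_2^*(z_2)+t\H_L(z)\big\}$ with $K=\R_+\times[0,\tfrac{\alpha_{L-1}\rho}{2}]$. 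The key observation is that the supremum over $z_1$ can be carried out explicitly: for fixed $z_2$ the bracket equals $z_1\big(x_1+\tfrac{2t}{\alpha_{L-1}}z_2\big)-\psi_1^*(z_1)+z_2x_2-\psi_2^*(z_2)$, and on $\Omega_\rho$ one has $0\le x_1+\tfrac{2t}{\alpha_{L-1}}z_2\le\rho(1-t)+t\rho=\rho$ — this is precisely where the moving boundary $h_1\le\rho(1-t)$ enters — so, using that $\psi_1$ is convex with $\psi_1^{**}=\psi_1$ on $[0,\rho]$ and nondecreasing (so the constraint $z_1\ge0$ is inactive), $\sup_{z_1\ge0}=\psi_1\big(x_1+\tfrac{2t}{\alpha_{L-1}}z_2\big)$. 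This produces the reduced representation
\[
f(t,x)=\sup_{z_2\in[0,\frac{\alpha_{L-1}\rho}{2}]}\Big\{\psi_1\big(x_1+\tfrac{2t}{\alpha_{L-1}}z_2\big)+z_2x_2-\psi_2^*(z_2)\Big\},
\]
which I would use throughout. Since $f$ is a supremum of functions affine in $(t,x)$, it is convex on the convex set $\Omega_\rho$, and the reduced formula (with the trivial lower bound at $z_2=0$) shows $f$ is finite.

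For condition \eqref{item.1_der_weak_sol}, note that a finite convex function is locally Lipschitz, hence differentiable a.e.\ on $\Int\Omega_\rho$. At a point $(t,x)$ of differentiability, any maximizer $z^\star$ in the non-reduced formula gives an affine minorant of $f$ touching it at $(t,x)$ with gradient $(\H_L(z^\star),z^\star)$; by uniqueness of the supporting hyperplane, $\nabla_x f(t,x)=z^\star\in K$, so $\partial_1 f\ge0$ and $\partial_2 f\in[0,\tfrac{\alpha_{L-1}\rho}{2}]$ a.e. Differentiating the reduced formula (envelope theorem) yields $\partial_1 f=\psi_1'\big(x_1+\tfrac{2t}{\alpha_{L-1}}z_2^\star\big)$, hence $0\le\partial_1 f\le\mathrm{Lip}(\psi_1)$; combined with $\partial_2 f\le\tfrac{\alpha_{L-1}\rho}{2}$ and $\partial_t f=\H_L(\nabla_x f)$, this bounds the full gradient, so $f$ is globally Lipschitz on $\Omega_\rho$. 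For the initial condition, at $t=0$ the formula splits as $f(0,x)=\sup_{z_1\ge0}\{z_1x_1-\psi_1^*(z_1)\}+\sup_{z_2\in[0,\frac{\alpha_{L-1}\rho}{2}]}\{z_2x_2-\psi_2^*(z_2)\}$; the first sum is $\psi_1^{**}(x_1)=\psi_1(x_1)$, and the second equals $\psi_2^{**}(x_2)=\psi_2(x_2)$ because the biconjugate is attained at a subgradient of $\psi_2$ at $x_2$, which lies in $[0,\tfrac{\alpha_{L-1}\rho}{2}]$ by $\psi_2$ nondecreasing and \eqref{e.psi_2}. Hence $f(0,\cdot)=\psi$.

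For condition \eqref{item:2}, observe that $f$ is convex jointly in $(t,x)$. At a.e.\ $(t,x)\in\Int\Omega_\rho$, $f$ is differentiable, and I claim the supremum in the non-reduced formula is attained there: since $x_1<\rho(1-t)$ strictly, we have $x_1+\tfrac{2t}{\alpha_{L-1}}z_2\le x_1+t\rho<\rho$ uniformly over $z_2\in[0,\tfrac{\alpha_{L-1}\rho}{2}]$, and using $\psi_1^*(z_1)\ge z_1\rho-\psi_1(\rho)$ the objective is coercive in $z_1$ while $z_2$ lives in a compact set and the objective is upper semicontinuous; hence a maximizer $z^\star$ exists. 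By the supporting-hyperplane argument above, $z^\star$ is unique and $(\partial_t f,\nabla_x f)=(\H_L(z^\star),z^\star)$, so $\partial_t f=\H_L(\nabla_x f)$, which is \eqref{e.HJ}; the excluded set $\{t=0\}\cup\{h_1=\rho(1-t)\}$ has measure zero.

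For condition \eqref{item:3}, I would use the reduced formula: $f(t,\cdot)=\sup_{z_2}H(x_1,x_2,z_2)$ with $H(x_1,x_2,z_2)=\psi_1\big(x_1+\tfrac{2t}{\alpha_{L-1}}z_2\big)+z_2x_2-\psi_2^*(z_2)$. Because $\psi_1$ is convex, $(x_1,z_2)\mapsto\psi_1\big(x_1+\tfrac{2t}{\alpha_{L-1}}z_2\big)$ has nonnegative mixed differences; $z_2x_2$ likewise in $(x_2,z_2)$; and $-\psi_2^*(z_2)$ depends on $z_2$ alone. Hence $H$ is supermodular on $\R^2\times[0,\tfrac{\alpha_{L-1}\rho}{2}]$, and by the standard fact that the partial supremum of a jointly supermodular function is supermodular (Topkis), $x\mapsto f(t,x)$ is supermodular, which is exactly \eqref{e.partial_convex_f}. (Equivalently: the optimal $z_2^\star(t,x)$ is nondecreasing in $x_2$ by monotone comparative statics, hence so is $\partial_1 f=\psi_1'\big(x_1+\tfrac{2t}{\alpha_{L-1}}z_2^\star\big)$, and one integrates in $x_1$.) The main obstacle I anticipate is the bookkeeping forced by the moving boundary and the unbounded constraint set $K$: one must check carefully that the $z_1$-supremum is governed precisely by $x_1+\tfrac{2t}{\alpha_{L-1}}z_2\le\rho$ (so that it is attained on the interior and equals $\psi_1$ of that argument), and that the restrictions $z_1\ge0$ and $z_2\le\tfrac{\alpha_{L-1}\rho}{2}$ are compatible with $\psi_1^{**}=\psi_1$, $\psi_2^{**}=\psi_2$, and with the subgradient bounds coming from the monotonicity of $\psi_1,\psi_2$ and \eqref{e.psi_2}. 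Once the reduced representation is established, the remaining steps are routine convex analysis.
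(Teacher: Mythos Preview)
Your argument is correct and takes a genuinely different route from the paper. The key divergence is your reduced one-parameter representation
\[
f(t,x)=\sup_{z_2\in[0,\frac{\alpha_{L-1}\rho}{2}]}\Big\{\psi_1\big(x_1+\tfrac{2t}{\alpha_{L-1}}z_2\big)+z_2x_2-\psi_2^*(z_2)\Big\},
\]
obtained by carrying out the $z_1$-supremum explicitly via $\psi_1^{**}=\psi_1$ once you observe $x_1+\tfrac{2t}{\alpha_{L-1}}z_2\in[0,\rho]$ on $\Omega_\rho$. The paper instead keeps the two-variable formula and shows the supremum can be restricted to a compact box $[0,R]\times[0,\tfrac{\alpha_{L-1}\rho}{2}]$ (with $R$ the Lipschitz constant of $\psi_1$), which furnishes attainment and hence Lipschitzness and \eqref{e.HJ} directly. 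The real payoff of your reduction is in condition~\eqref{item:3}: you get it in one stroke from the lattice-theoretic fact that the partial supremum of a jointly supermodular function is supermodular, whereas the paper proves \eqref{e.partial_convex_f} by a bare-hands case split on whether the maximizers $z,z'$ at $x+\lambda e_1$ and $x+\lambda e_2$ are comparable, introducing auxiliary points $\tilde z,\tilde z'$ and exploiting the specific bilinear form of $\H_L$ and the additive structure of $\psi$. Your argument is shorter and more conceptual for this separable setting; the paper's is more elementary (no appeal to Topkis-type results) and would adapt with fewer changes if the separability of $\psi$ or the bilinearity of $\H_L$ were relaxed. The initial condition and condition~\eqref{item:2} are handled essentially the same way in both proofs (biconjugation and the supporting-hyperplane identification $\nabla f=z^\star$).
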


The expression in \eqref{e.Hopf_formula_orig} is known as the Hopf formula \cite{bardi1984hopf,lions1986hopf}. 

\subsection{Proof of Proposition~\ref{p.uniqueness}}

The idea of this proof can be seen in \cite[Section~3.3.3]{evans2010partial}.
Let $f$ and $g$ be weak solutions to \eqref{e.HJ}. Setting $w = f-g$, we have that
\begin{align*}
    \partial_t w & = \H_L(\nabla f)-\H_L(\nabla g)  = b\cdot \nabla w
\end{align*}
where the vector $b$ is given by
\begin{align}\label{e.def_b_eps}
    b = \frac{2}{\alpha_{L-1}}\big(\partial_{2}g,\  \partial_{1}f\big).
\end{align}
For some smooth function $\phi:\R_+ \to \R_+$ to be chosen later, we set $v = \phi(w)$, which, by the chain rule, satisfies that
\begin{align}\label{e.v_eqn}
    \partial_t v = b \cdot \nabla v.
\end{align}
Then, we regularize $b$ by setting $b_\eps = b*\xi_\eps$ for the mollifier $\xi_\eps$ introduced in \eqref{e.mollifier}, where we understand that the convolution is taken with respect to the spacial variable.
On $\Omega_{\rho,\eps}$ given in \eqref{e.Omega_rho_eps},
the equation \eqref{e.v_eqn} can be rewritten as
\begin{align}\label{e.partial_t_v}
    \partial_t v = \div( vb_\eps) - v\,\div\, b_\eps +(b-b_\eps)\cdot \nabla v.
\end{align}
Before proceeding further, we need to estimate some terms related to this display. 

\medskip

Definition~\ref{def.weak_sol}~\eqref{item:3} and \eqref{e.2nd_mixed_derivatives_f} imply that, for all $(t,x)\in\Omega_{\rho,\eps}$,
\begin{align*}
     \partial_1\partial_2 f_\eps(t,x),\  \partial_1\partial_2 g_\eps(t,x)\geq 0,
\end{align*}
and thus
\begin{align}\label{e.div_b_eps_geq_0}
    \div\, b_\eps  \geq 0,\quad\forall (t,x)\in\Omega_{\rho,\eps}.
\end{align}
By the definitions of $f_\eps$ and $g_\eps$, we also have that
\begin{align}\label{e.grad_f_eps_bound}
    |\nabla f_\eps |\leq \|f\|_\mathrm{Lip},\quad |\nabla g_\eps |\leq \|g\|_\mathrm{Lip}.
\end{align}

Let us fix a constant $R$ to satisfy
\begin{align}\label{e.def_R_in_HJ_section}
    R> \sup\big\{|\nabla\H_L(p)|:p\in\R^2_+,\ |p|\leq \|f\|_\mathrm{Lip}+ \|g\|_\mathrm{Lip}\big\}.
\end{align}
Fix any $\eta>0$ and define, for $t\in[0,1-\frac{2}{\rho}\eta]$,
\begin{align}
    D_t& = [\eta, \rho(1-t)-\eta]\times [\eta, R(1-t)], \label{e.D_t_uniqueness}\\
    \Gamma_{1,t} &= [\eta, \rho(1-t)-\eta]\times\{R(1-t)\},\nonumber\\
     \Gamma_{2,t} &= \{\rho(1-t)-\eta\}\times[\eta, R(1-t)].\nonumber
\end{align}
Now, we introduce, for $t\in[0,1-\frac{2}{\rho}\eta]$,
\begin{align*}
    J(t) = \int_{D_t}v(t,x)\d x.
\end{align*}
We emphasize that $J$ depends on $\eta$.
Choose $\eps<\eta$ to ensure that $\bigcup_{t\in[0,1-\frac{2}{\rho}\eta]}(\{t\}\times D_t)\subset \Omega_{\rho,\eps}$. Using \eqref{e.partial_t_v} and integration by parts on the integral of $\div( vb_\eps)$, we can compute that
\begin{align*}
    \frac{\d}{\d t}J(t)& = \int_{D_t}\partial_t v  - R \int_{\Gamma_{1,t}} v-  \rho \int_{\Gamma_{2,t}} v\\
    & = \int_{\Gamma_{1,t}}(\mathbf{n}\cdot b_\eps - R)v+  \int_{\Gamma_{2,t}}(\mathbf{n}\cdot b_\eps - \rho)v\\
    &\quad +\int_{\partial D_t \setminus \Gamma_t} (\mathbf{n}\cdot b_\eps)v + \int_{D_t} v(-\div\, b_\eps) +\int_{D_t}(b-b_\eps)\cdot \nabla v,
\end{align*}
where $\mathbf{n}$ stands for the outer normal vector. Then, $\mathbf n = (0,1)$ on $\Gamma_{1,t}$ and $\mathbf n = (1,0)$ on $\Gamma_{2,t}$. We treat the integrals after the second equality individually. 
Due to \eqref{e.def_b_eps}, \eqref{e.grad_f_eps_bound} and \eqref{e.def_R_in_HJ_section}, the first integral is nonpositive. By Definition~\ref{def.weak_sol}~\eqref{item.1_der_weak_sol} and \eqref{e.def_b_eps}, the second integral is nonpositive. Note that on $\partial D_t \setminus \Gamma_t$, we have $-\mathbf{n}\in\R^2_+$. By Definition~\ref{def.weak_sol}~\eqref{item.1_der_weak_sol}, we can infer from the definition of $b_\eps$ that $b_\eps \in \R^2_+$ on $\partial D_t\setminus\Gamma_t$, which implies that the third integral is nonpositive. 
In view of \eqref{e.div_b_eps_geq_0}, the fourth integral is again nonpositive. The last one is $o_\eps(1)$. Therefore, sending $\eps\to 0$, we conclude that, for $t\in[0,1-\frac{2}{\rho}\eta]$,
\begin{align}\label{e.dJ_leq_0}
    \frac{\d }{\d t}J(t)\leq 0.
\end{align}

Since $w(0,x) = f(0,x)-g(0,x)=0$, we have $\|w(\delta, \cdot)\|_\infty \leq \delta(\|f\|_\mathrm{Lip}+\|g\|_\mathrm{Lip})$, for each $\delta>0$. Let us choose $\phi=\phi_\delta$ to satisfy 
\begin{align*}
    \begin{cases}\phi_\delta(z) = 0,&\quad \text{if }|z|\leq \delta(\|f\|_\mathrm{Lip}+\|g\|_\mathrm{Lip}),\\
    \phi_\delta(z)>0, &\quad \text{otherwise}.
    \end{cases}
\end{align*}
Therefore, due to $v=\phi_\delta(w)$, we have that
\begin{align*}
    J(\delta) = \int_{D_\delta}v(\delta,x)\d x =\int_{D_\delta}\phi_\delta(w(\delta ,x ))\d x = 0.
\end{align*}
Since $J(t)$ is nonnegative, \eqref{e.dJ_leq_0} implies that $J_\delta (t)=0$ for all $t\in[\delta, 1-\frac{2}{\rho}\eta]$. This together with the definition of $\phi$ guarantees that
\begin{align*}
    |f(t,x)-g(t,x)|\leq \delta(\|f\|_\mathrm{Lip}+\|g\|_\mathrm{Lip}), \quad \forall x\in D_t,\ \forall t\in\left[\delta, 1-\frac{2}{\rho}\eta\right].
\end{align*}
Recall the definition of $D_t$ in \eqref{e.D_t_uniqueness} which depends on $\eta$.
Taking $\delta\to0$, $\eta\to0$ and $R\to\infty$, we conclude that $f=g$.

\subsection{Proof of Proposition~\ref{p.hopf}}

Let us extend $\psi_1$ to be defined on $\R_+$ by setting
\begin{align}\label{e.psi_ext}
    \psi_1(x_1) = \infty,\quad\forall x_1 \in \R_+\setminus [0,\rho].
\end{align}
Then, $\psi_1$ is still convex and nondecreasing. For $u:\R^2_+\to \R\cup\{\infty\}$, the Fenchel transformation is defined by
\begin{align}\label{e.def_u*}
    u^*(x) = \sup_{y\in \R^2_+}\{y\cdot x - u(y)\},\quad \forall x\in \R^2_+.
\end{align}
Hence, we can rewrite the Hopf formula \eqref{e.Hopf_formula_orig} as
\begin{align}\label{e.f_Hopf}
\begin{split}
    f(t,x) &= \sup_{z\in \R_+\times [0, \frac{\alpha_{L-1}\rho}{2}]}\inf_{y\in \R^2_+}\big\{z\cdot (x-y) +\psi(y) +t \H_L(z)\big\}\\
    & = \sup_{z\in \R_+\times [0, \frac{\alpha_{L-1}\rho}{2}]}\{z\cdot x - \psi^*(z)+t\H_L(z)\}.
\end{split}
\end{align}

We first show that $f$ is indeed finite on $\Omega_\rho$. From \eqref{e.def_general_psi}, it follows that
\begin{align}\label{eq:psi*(z)=psi_1+psi_2}
    \psi^*(z) = \psi^*_1(z_1) + \psi^*_2(z_2),\quad\forall z\in\R_+^2,
\end{align}
where the Fenchel transforms on the right-hand side are for functions defined on $\R_+$ which are defined analogously to \eqref{e.def_u*}. 
By the assumption that $\psi_1$ is Lipschitz and nondecreasing, there is some $R\geq 0$ such that
\begin{align*}
    0\leq \psi_1(r)- \psi(r') \leq R(r-r'),\qquad \forall  r\geq r',\quad r,r'\in [0,\rho].
\end{align*}
Due to the extension in \eqref{e.psi_ext}, we have that
\begin{align*}
    \psi_1^*(z_1) = \sup_{y_1\in[0,\rho]}\{y_1z_1 - \psi_1(y_1)\}.
\end{align*}
The above two displays imply that
\begin{align}\label{e.psi*_1(z_1)}
    \psi_1^*(z_1) = \rho z_1 - \psi_1(\rho),\quad\forall z_1\geq R.
\end{align}
On the other hand, due to \eqref{e.psi_2}, 
\begin{align}\label{eq:psi_2=infty}
    \psi_2^*(z_2) =\infty,\quad\forall z_2>\frac{\alpha_{L-1}\rho}{2}.
\end{align}
Using this, \eqref{eq:psi*(z)=psi_1+psi_2} and the expression of $\H_L$ in \eqref{e.H}, we rewrite \eqref{e.f_Hopf} as
\begin{align}\label{e.f_formula_2}
    f(t,x) = \sup_{z_2\in [0,\frac{\alpha_{L-1}\rho}{2}]}\left\{z_2x_2-\psi^*_2(z_2)+\sup_{z_1\in \R_+}\left\{z_1x_1-\psi^*_1(z_1)+\frac{2t}{\alpha_{L-1}}z_1z_2\right\}\right\}.
\end{align}
We show that the second $\sup$ can be restricted to $z_1\in[0,R]$. Given $(t,x)\in\Omega_\rho$, we have that $x_1\in[0,\rho(1-t)]$ due to the definition of $\Omega_\rho$ in \eqref{e.def_Omega_r}. This implies that
\begin{align*}
    x_1+\frac{2t}{\alpha_{L-1}}z_2 -\rho \leq 0,\qquad\forall (t,x)\in\Omega_\rho, \ z_2\in\left[0,\frac{\alpha_{L-1}\rho}{2}\right],
\end{align*}
which together with \eqref{e.psi*_1(z_1)} shows that
\begin{align*}
    \sup_{z_1\in[R,\infty)}\left\{z_1x_1-\psi^*_1(z_1)+\frac{2t}{\alpha_{L-1}}z_1z_2\right\} =
    \sup_{z_1\in[R,\infty)}\left\{\left(x_1+\frac{2t}{\alpha_{L-1}}z_2 -\rho \right)z_1 +\psi_1(\rho)\right\}
    \\
    \leq \left(x_1+\frac{2t}{\alpha_{L-1}}z_2 -\rho \right)R+\psi_1(\rho)= Rx_1-\psi^*_1(R)+\frac{2t}{\alpha_{L-1}}Rz_2,
\end{align*}
In other words, the above $\sup$ is achieved at $z_1 = R$.
Hence, the second $\sup$ in \eqref{e.f_formula_2} can be taken over $z_1\in [0,R]$ and thus the
$\sup$ in \eqref{e.f_Hopf} can be restricted to $z$ belonging to the compact set
\begin{align}\label{e.compact_K}
    K = [0,R]\times \left[0,\frac{\alpha_{L-1}\rho}{2}\right].
\end{align}
Therefore, due to the easy observation that $\psi^*$ is nondecreasing and lower semi-continuous, we can see that $f$ is finite on $\Omega_\rho$, and furthermore, for every $(t,x)\in\Omega_\rho$, 
\begin{align}\label{e.f_sup_achieve_at_z}
    f(t,x) = z\cdot x -\psi^*(z)+t\H_L(z),\qquad \exists z \in K.
\end{align}

In the following, we verify that \eqref{e.f_Hopf} is a weak solution by checking the initial condition, and conditions \eqref{item.1_der_weak_sol}, \eqref{item:2}, \eqref{item:3} in Definition~\ref{def.weak_sol}.

\subsubsection{Initial condition}
Using \eqref{eq:psi*(z)=psi_1+psi_2} and \eqref{eq:psi_2=infty}, the expression in \eqref{e.f_Hopf} at $t=0$ becomes
\begin{align*}
    f(0,x) = \sup_{z\in\R_+^2}\{z\cdot x - \psi^*(z)\} = \psi^{**}(x).
\end{align*}
Since it is clear from the assumption that the extended $\psi$ is lower semi-continuous, nondecreasing and convex, the Fenchel--Moreau biconjugation identity (cf.\ \cite[Theorem~12.4]{rockafellar1970convex}, and \cite[Theorem~2.2]{chen2020fenchel} for more general cones) ensures that
\begin{align*}
    \psi(x) =\psi^{**}(x),\quad \forall x\in\R^2_+.
\end{align*}
In particular, we have $f(0,\cdot)=\psi$ on $\Omega_\rho$.

\subsubsection{Condition~\eqref{item.1_der_weak_sol}}

Let $(t,x)\in\Omega_\rho$ and $z\in K$ be given by \eqref{e.f_sup_achieve_at_z}. Using this and \eqref{e.f_Hopf} for $(t',x')\in \Omega_\rho$, we have
\begin{align}\label{e.f_Lip}
    f(t,x) -f(t',x')\leq z\cdot(x-x') + \H_L(z)(t-t').
\end{align}
A similar equality holds for some $z'\in K$ when interchanging $(t,x), (t',x')$.
By the compactness of $K$, we can see that $f$ is Lipschitz. 
Due to Rademacher's theorem, $f$ is differentiable a.e. 
Using \eqref{e.f_Lip} and the definition of $K$ in \eqref{e.compact_K}, we can also see that 
\begin{align*}
    \partial_1 f \in [0,R],\qquad \partial_2 f\in [0,\frac{\alpha_{L-1}\rho}{2}], \qquad \text{a.e.,}
\end{align*}
which completes the verification of Definition~\ref{def.weak_sol}~\eqref{item.1_der_weak_sol}.

\subsubsection{Condition~\eqref{item:2}}

We want to verify that \eqref{e.f_Hopf} satisfies \eqref{e.HJ} almost everywhere. Let $(t,x)$ be a point at which $f$ is differentiable. We can assume that $(t, x)\in\Int\Omega_\rho\subset (0,\infty)^3$, because otherwise $(t,x)$ belongs to a set with Lebesgue measure zero. Let $z$ be given by \eqref{e.f_sup_achieve_at_z}.
By this and \eqref{e.f_Hopf}, for $s\in \R$ and $h\in \R^2$ sufficiently small,
\begin{align}\label{e.f(t,x)_upper_bound_verify_HJ}
     f(t+ s,x+h)- f(t,x)\geq z\cdot h+s\H_L( z).
\end{align}
Set $s=0$ and vary $h$ to see that
\begin{align*}
    z = \nabla f(t,x).
\end{align*}
Then, we set $h=0$ in \eqref{e.f(t,x)_upper_bound_verify_HJ}, vary $s$ and use the above display to obtain
\begin{align*}
    \partial_t f(t,x)= \H_L(\nabla f(t,x)).
\end{align*}

\subsubsection{Condition~\eqref{item:3}}

Let $(t,x)\in\Int\Omega_\rho$ and $\lambda\in\R$ be sufficiently small. Due to \eqref{e.f_sup_achieve_at_z}, there are $z,z'$ such that
\begin{align}\label{e.z_z'_property}
\begin{split}
    f(t,x+\lambda e_1) &= z\cdot (x+\lambda e_1) - \psi^*(z) + t\H_L(z),\\
    f(t,x+\lambda e_2) &= z'\cdot (x+\lambda e_2) - \psi^*(z') + t\H_L(z').
\end{split}
\end{align}

Case 1: $(z_1,z_2)\leq (z'_1,z'_2)$ or $(z_1,z_2)\geq (z'_1,z'_2)$. Let us only treat the latter case. The other case is similar. Using \eqref{e.f_Hopf}, we have
\begin{align*}
    f(t,x+\lambda e_1+\lambda e_2) &\geq z\cdot (x+\lambda e_1+\lambda e_2)-\psi^*(z) + t\H_L(z),\\
    f(t, x) &\geq z'\cdot x-\psi^*(z') + t\H_L(z').
\end{align*}
This along with \eqref{e.z_z'_property} implies that the left hand side of \eqref{e.partial_convex_f} is bounded below by
\begin{align*}
   \lambda  z\cdot  e_2 -\lambda z'\cdot e_2=\lambda (z_2 -z'_2)\geq 0.
\end{align*}

Case 2: neither $(z_1,z_2)\leq (z'_1,z'_2)$ nor $(z_1,z_2)\geq (z'_1,z'_2)$. This condition implies that
\begin{align}\label{e.(z_1-z'_1)(z_2-z'_2)<0}
    (z_1-z'_1)(z_2-z'_2)< 0.
\end{align}
Let $\tz =(z_1,z'_2)$ and $\tz' = (z'_1, z_2)$. By \eqref{e.f_Hopf}, for each $\delta>0$, there are $y,y'\in\R^2_+$ such that
\begin{align}\label{e.partial_convex_lower_bound_1}
\begin{split}
    f(t,x+\lambda e_1+\lambda e_2) &\geq \tz \cdot (x+\lambda e_1+\lambda e_2-y)+\psi(y) +t\H_L(\tz)-\delta,\\
    f(t,x) &\geq \tz' \cdot (x-y')+\psi(y') +t\H_L(\tz')-\delta.
\end{split}
\end{align}
We set
\begin{align*}
    \ty =(y_1,y'_2),\quad \ty' = (y'_1, y_2).
\end{align*}
Note that
\begin{align}\label{e.tz_ty_z_y_relation}
    \tz\cdot y+\tz'\cdot y' - z\cdot \ty - z'\cdot \ty' =0.
\end{align}
From \eqref{e.z_z'_property}, we also have
\begin{align}\label{e.partial_convex_lower_bound_2}
\begin{split}
     f(t,x+\lambda e_1) &\leq  z \cdot (x+\lambda e_1-\ty)+\psi(\ty) +t\H_L(z),\\
    f(t,x+\lambda e_2) &\leq z' \cdot (x+\lambda e_2-\ty')+\psi(\ty') +t\H_L(z').
\end{split}
\end{align}
To get a lower bound for the left hand side of \eqref{e.partial_convex_f}, we start by observing that, due to \eqref{e.tz_ty_z_y_relation}, 
\begin{align*}
    &\quad \tz \cdot (x+\lambda e_1+\lambda e_2-y) + \tz' \cdot (x-y') - z \cdot (x+\lambda e_1-\ty) - z'\cdot (x+\lambda e_2-\ty')\\
    &=(\tz+\tz'-z-z')\cdot x - (\tz\cdot y+\tz'\cdot y' - z\cdot \ty - z'\cdot \ty')+\lambda(z_1+z_2'-z_1-z'_2)\\
    &=0.
\end{align*}
This along with \eqref{e.partial_convex_lower_bound_1} and \eqref{e.partial_convex_lower_bound_2} implies that the left hand side of \eqref{e.partial_convex_f} can be bounded from below by
\begin{align*}
    \psi(y)+\psi(y')- \psi(\ty)-\psi(\ty')+t\big(\H_L(\tz) + \H_L(\tz') - \H_L(z) -\H_L(z')\big)-2\delta.
\end{align*}
From \eqref{e.def_general_psi}, we can see
\begin{align*}
    \psi(y)+\psi(y')- \psi(\ty)-\psi(\ty')=0.
\end{align*}
Lastly, due to \eqref{e.(z_1-z'_1)(z_2-z'_2)<0} and the definition of $\H_L$ in \eqref{e.H}, we can compute that
\begin{align*}
    \H_L(\tz)+\H_L(\tz') - \H_L(z)-\H_L(z')= -\frac{2}{\alpha_{L-1}}(z_1-z'_1)(z_2-z'_2)> 0.
\end{align*}
The above three displays imply that the left hand side of \eqref{e.partial_convex_f} is bounded from below by $-2\delta$. The desired result follows by sending $\delta \to 0$.

\section{Convergence of the free energy}\label{s.cvg}

The goal of this section is to prove Theorem~\ref{t}. The key tool is Proposition~\ref{p.cvg} stated below, which ensures the convergence of $\bar F_{\beta,L,n}$ given the convergence of $\bar F_{\beta,L,n}(0,\cdot)$ and some additional conditions. The object $\bar F_{\beta,L,n}(0,\cdot)$ is closely related to the free energy associated with the $(L-1)$-layer model. Hence, an iteration is employed in Section~\ref{s.iteration} to complete the proof of Theorem~\ref{t}.

Recall the definition of $\rho_{L-1,n}$ in \eqref{e.rho_l} and of domain $\Omega_{\rho}$, for $\rho>0$, in \eqref{e.def_Omega_r}.

\begin{proposition}\label{p.cvg}

Assume \ref{i.assump_w_|X|<sqrt_n}, \ref{i.assump_w_varphi} and \ref{i.assump_Phi_Gaussian} for some $L\in \N$.
Suppose that the following holds:
\begin{enumerate}
    \item \label{item:rho_cvg} the limit \eqref{e.rho_cvg} for $l=L-1$ exists for some $\rho_{L-1}>0$;
    \item \label{item:initial_cvg} there is a continuous $\psi_{\beta,L}:[0,\rho_{L-1}]\times \R_+\to\R$ such that
    \begin{align*}
        \lim_{n\to\infty}\bF_{\beta,L,n}(0,h)=\psi_{\beta,L}(h),\quad \forall h\in [0,\rho_{L-1})\times\R_+ ,
    \end{align*}
    and there is a weak solution $f_{\beta,L}$ to \eqref{e.HJ} on $\Omega_{\rho_{L-1}}$ satisfying $f_{\beta,L}(0,\cdot)=\psi_{\beta,L}$;
    \item
    \label{i.concent_norm}

    there is $C>0$ such that
    \begin{align*}
        \E\left[\left(\frac{\left|X^\pr{L-1}\right|^2}{n_{L-1}}-\rho_{L-1,n} \right)^2\right] \leq \frac{C}{n},\quad\forall n\in\N;
    \end{align*}

    \item \label{item:concentration} for every $M\geq 1$,
    \begin{align*}
        \lim_{n\to\infty} \sup_{\substack{t\in[0,1],\\ h_1\in[0,\ \rho_{L-1,n}(1-t)]}}\E\left[\left\|F_{\beta,L,n} - \bar F_{\beta,L,n}\right\|^2_{L^\infty_{h_2}([0,M])}(t,h_1)\right]=0.
    \end{align*}
\end{enumerate}
Then, for every $\rho'\in(0,\rho_{L-1})$,
\begin{align*}
    \lim_{n\to\infty}\bar F_{\beta,L,n}(t,h) = f_{\beta,L}(t,h),\quad\forall (t,h)\in\Omega_{\rho'}.
\end{align*}

\end{proposition}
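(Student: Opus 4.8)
\emph{Proof proposal.} The plan is to deduce the convergence from three ingredients already in hand: the approximate Hamilton--Jacobi equation of Proposition~\ref{p.approx_hj}, the derivative bounds of Lemma~\ref{l.der_est}, and the uniqueness of weak solutions of Proposition~\ref{p.uniqueness}. Concretely, I will first show that $\{\bF_{\beta,L,n}\}$ is precompact for local uniform convergence on every $\Omega_{\rho''}$ with $\rho''\in(0,\rho_{L-1})$; then show that every subsequential limit is a weak solution of \eqref{e.HJ} on $\Omega_{\rho_{L-1}}$ with initial condition $\psi_{\beta,L}$; and finally invoke Proposition~\ref{p.uniqueness} to identify any such limit with $f_{\beta,L}$. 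Since every subsequential limit then equals $f_{\beta,L}$, the full sequence converges, which is the assertion for each $\rho'\in(0,\rho_{L-1})$.

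For precompactness, fix $\rho''\in(0,\rho_{L-1})$; by assumption~\eqref{item:rho_cvg} we have $\rho_{L-1,n}>\rho''$ for $n$ large, so $\bF_{\beta,L,n}$ is defined on $\Omega_{\rho''}$. By Lemma~\ref{l.der_est}, the formula \eqref{e.d_tF_n} for $\partial_t\bF_{\beta,L,n}$, the a.s.\ bound $|X^\pr{L-1}|,|x^\pr{L-1}|\le C\sqrt n$, and assumptions~\eqref{i.concent_norm} and \eqref{item:concentration} (which keep $a'_n$ bounded), the family is uniformly Lipschitz on compact subsets of $\Omega_{\rho''}$, and it is pointwise bounded at $(0,0,0)$ by assumption~\eqref{item:initial_cvg}, hence uniformly bounded on compacts. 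Arzel\`a--Ascoli applied on the slabs $\Omega_{\rho''}\cap\{h_2\le M\}$, together with a diagonal extraction over $M\to\infty$ and $\rho''\uparrow\rho_{L-1}$, produces a subsequence along which $\bF_{\beta,L,n}$ converges locally uniformly on $\bigcup_{\rho''<\rho_{L-1}}\Omega_{\rho''}$, hence (up to a Lebesgue-null set, by Lipschitz continuity) on $\Omega_{\rho_{L-1}}$, to a Lipschitz function $f$; by assumption~\eqref{item:initial_cvg}, $f(0,\cdot)=\psi_{\beta,L}$.

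Next I verify that such an $f$ satisfies Definition~\ref{def.weak_sol}. Conditions~\eqref{item.1_der_weak_sol} and \eqref{item:3} pass to the limit from the bounds $\partial_1\bF_{\beta,L,n}\ge0$, $\partial_2\bF_{\beta,L,n}\in[0,\tfrac{n_{L-1}\rho_{L-1,n}}{2n}]$ and $\partial_i\partial_j\bF_{\beta,L,n}\ge0$ of Lemma~\ref{l.der_est} (the last yielding the discrete inequality \eqref{e.partial_convex_f} for $\bF_{\beta,L,n}$, which is stable under uniform limits), using $\tfrac{n_{L-1}\rho_{L-1,n}}{2n}\to\tfrac{\alpha_{L-1}\rho_{L-1}}{2}$. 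For condition~\eqref{item:2} I pass to the limit in Proposition~\ref{p.approx_hj}. The term $a_n$ tends to $0$ uniformly on compacts of $\Omega_{\rho_{L-1}}$, because by \eqref{e.a_n_bd} the factor $n\,\E(|X^\pr{L-1}|^2/n_{L-1}-\rho_{L-1,n})^2$ is bounded by assumption~\eqref{i.concent_norm}, $\E(F_{\beta,L,n}-\bF_{\beta,L,n})^2\to0$ uniformly for $h_2\le M$ by assumption~\eqref{item:concentration}, and $n_{L-1}/n\to\alpha_{L-1}$. The term $\tfrac1n\partial^2_2\bF_{\beta,L,n}$ tends to $0$ in $L^1$ in $h_2$ on $[0,M]$, since $\int_0^M\tfrac1n\partial^2_2\bF_{\beta,L,n}\,\d h_2=\tfrac1n\big(\partial_2\bF_{\beta,L,n}(\cdot,M)-\partial_2\bF_{\beta,L,n}(\cdot,0)\big)\le C/n$ by Lemma~\ref{l.der_est}. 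The remaining term $\E(\partial_2F_{\beta,L,n}-\partial_2\bF_{\beta,L,n})^2$ is handled by a convexity argument on each slab $h_2\in[m,M]$, $m>0$: after adding a small multiple of $h_2\mapsto|Z'|h_2^2$ to $F_{\beta,L,n}$ --- which has vanishing mean and $O(1/n)$ variance and, by \eqref{e.2nd_d_F_lbd}, makes $h_2\mapsto F_{\beta,L,n}$ convex there --- one bounds a difference of derivatives of convex functions by a difference of one-sided increments, obtaining $\int_m^M\E(\partial_2F_{\beta,L,n}-\partial_2\bF_{\beta,L,n})^2\,\d h_2\le C\delta^{-2}\sup_{t,h_1}\E\|F_{\beta,L,n}-\bF_{\beta,L,n}\|^2_{L^\infty_{h_2}([0,M])}+C\delta$, which vanishes on sending $n\to\infty$ then $\delta\to0$, using assumption~\eqref{item:concentration} and the monotonicity and boundedness of $\partial_2\bF_{\beta,L,n}$ in $h_2$. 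Hence the right-hand side of Proposition~\ref{p.approx_hj} tends to $0$ in $L^1_{\mathrm{loc}}(\Omega_{\rho_{L-1}}\cap\{h_2>0\})$. Since $\partial^2_1\bF_{\beta,L,n},\partial^2_2\bF_{\beta,L,n}\ge0$, the function $\bF_{\beta,L,n}$ is convex in $h_1$ and in $h_2$ separately, so pointwise convergence forces $\nabla\bF_{\beta,L,n}\to\nabla f$ a.e.; as the gradients are uniformly bounded, $\H_L(\nabla\bF_{\beta,L,n})\to\H_L(\nabla f)$ in $L^1_{\mathrm{loc}}$, and combined with the uniform convergence $\bF_{\beta,L,n}\to f$ this identifies the distributional, hence (by Rademacher) the a.e.\ classical, $t$-derivative of $f$ with $\H_L(\nabla f)$. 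Thus $f$ is a weak solution.

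Finally, $\psi_{\beta,L}$ is Lipschitz, being a uniform limit of uniformly Lipschitz functions, and by assumption~\eqref{item:initial_cvg} the given $f_{\beta,L}$ is a weak solution on $\Omega_{\rho_{L-1}}$ with $f_{\beta,L}(0,\cdot)=\psi_{\beta,L}$; Proposition~\ref{p.uniqueness} then forces $f=f_{\beta,L}$. As every subsequential limit coincides with $f_{\beta,L}$, the full sequence $\bF_{\beta,L,n}$ converges to $f_{\beta,L}$ locally uniformly on $\Omega_{\rho'}$ for every $\rho'\in(0,\rho_{L-1})$. I expect the main obstacle to lie in the estimate of $\E(\partial_2F_{\beta,L,n}-\partial_2\bF_{\beta,L,n})^2$ --- turning the $L^\infty$-in-$h_2$ free-energy concentration of assumption~\eqref{item:concentration} into an $L^2$ bound on the $h_2$-derivative via the near-convexity \eqref{e.2nd_d_F_lbd} --- and, relatedly, in the passage to the limit in the nonlinear Hamiltonian $\H_L(\nabla\bF_{\beta,L,n})$, which hinges on the almost-everywhere gradient convergence afforded by the separate convexity of $\bF_{\beta,L,n}$.
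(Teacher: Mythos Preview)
Your proposal is correct but takes a genuinely different route from the paper. The paper never extracts subsequences: it works directly with $w_n=\bar F_n-f$, derives the transport equation $\partial_t w_n=b_n\cdot\nabla w_n+r_n$ with $b_n=\frac{2}{\alpha_{L-1}}(\partial_2 f,\partial_1\bar F_n)$, and controls $J_\delta(t)=\int_{D_t}\phi_\delta(w_n)$ by differentiating in $t$, integrating $\div(v_nb_n^\eps)$ by parts, and using $\div b_n^\eps\ge0$ (from the partial convexity of both $f$ and $\bar F_n$) together with the sign of the boundary fluxes; this yields $\sup_t\int_{E_{n,t}}|\bar F_n-f|\to0$ directly. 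Your approach is the compactness--uniqueness paradigm: Arzel\`a--Ascoli, then verify the subsequential limit satisfies Definition~\ref{def.weak_sol}, then invoke Proposition~\ref{p.uniqueness}.

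Both routes hinge on the same technical estimate, namely that $\int\E(\partial_2 F_n-\partial_2\bar F_n)^2\,\d h_2\to0$. The paper obtains it by integration by parts in $h_2$ (writing $\int|\partial_2(F_n-\bar F_n)|^2=[\text{bdry}]-\int(F_n-\bar F_n)\partial_2^2(F_n-\bar F_n)$ and bounding $|\partial_2^2(F_n-\bar F_n)|$ via \eqref{e.2nd_d_barF_lbd}--\eqref{e.2nd_d_F_lbd}); your convex--increment trick with the $n^{-1/2}|Z'|$ correction is an equivalent implementation. What each approach buys: the paper's energy method is shorter and avoids checking that the limit is a weak solution (in particular it sidesteps the a.e.\ gradient--convergence step, which in your argument relies on the separate convexity of $\bar F_n$ in $h_1$ and $h_2$); your approach is closer to the standard viscosity--solution template and makes the role of uniqueness more transparent, but requires the additional passage $\nabla\bar F_n\to\nabla f$ a.e.\ and the distributional identification of $\partial_t f$.
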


We restrict to the domain $\Omega_{\rho'}$ because the pointwise limit of $\bar F_{\beta,L,n}$ may not be well-defined on boundary points of $\Omega_{\rho_{L-1}}$ (recall that $\bar F_{\beta,L,n}$ is defined on $\Omega_{\rho_{L-1,n}}$).
The proof of this proposition is postponed to Section~\ref{s.proof_cvg}.

\subsection{Iteration}\label{s.iteration}
Let us prove Theorem~\ref{t} using Proposition~\ref{p.cvg} together with some technical results postponed to Section~\ref{s.aux}.

Assuming that \ref{i.assump_iid_bdd}--\ref{i.assump_Phi_Gaussian} hold for the model with $L_0$ layers, then these assumptions automatically hold for all $L\in\{1,\dots, L_0\}$.
Hence, for all $L\in\{1,\dots, L_0\}$, conditions~\eqref{item:rho_cvg}, \eqref{i.concent_norm},  \eqref{item:concentration} in Proposition~\ref{p.cvg} are guaranteed to hold by Lemmas~\ref{l.cvg_rho}, \ref{l.cvg_norm} and \ref{l.concent}, respectively.
We will apply Proposition~\ref{p.cvg} iteratively to prove Theorem~\ref{t}. 
Recall the definitions of $F^\circ_{\beta,L,n}$, $\Psi_0$, $\Psi_l$, $F_{\beta,L,n}$ in \eqref{e.F^circ}, \eqref{e.Psi_0}, \eqref{e.Psi_l}, \eqref{e.F_beta_L_n_(t,h)}, respectively, and also the important relation \eqref{e.EF^circ=bar_F}, which implies that
\begin{align}\label{e.lim_EF^circ=lim_bar_F}
    \lim_{n\to\infty}\E F^\circ_{\beta,L,n} = \lim_{n\to\infty} \bar F_{\beta,L,n}(1,0),
\end{align}
whenever one of the limits exists.
Also recall the definition of $\alpha_l$ in \eqref{e.n_l/n}.

Before proceeding, let us record the following result. Comparing the definitions of \eqref{e.P_beta(y|x)} and \eqref{e.tilde_Pout}, and using the fact that $A^\pr{L}$ has i.i.d.\ components due to \ref{i.assump_Phi_Gaussian}, we can see that, for every $\beta,L,n$,
\begin{align}\label{e.P_to_tilde_P}
    \Pout_{\beta,L,n}(y|z) = \prod_{j=1}^{n_L}\tilde\Pout_{\beta,L}(y_j|z_j),\quad \forall y,z\in \R^{n_L}.
\end{align}

We start with $L=1$. 
Using \eqref{e.cap_S_mu}--\eqref{e.F_beta_L_n_(t,h)} with $L$ replaced by $1$, we can compute that
\begin{align*}
    \bar F_{\beta,1,n}(0,h)
    &= \frac{1}{n}\E\log\int\Pout_{\beta,1,n}\left(\mathcal Y^\pr{1}\Big| \sqrt{h_1}V+\sqrt{\rho_{0,n}-h_1}w\right)\d P_W(w) 
    \\
    &\quad\qquad + \frac{1}{n}\E\log\int e^{h_2 X\cdot x+ \sqrt{h_2}Z'\cdot x-\frac{h_2}{2}|x|^2}\d P_X(x)
\end{align*}
where
\begin{align*}
    \mathcal Y^\pr{1} = \sqrt{\beta}\varphi_1\left(\sqrt{h_1}V+\sqrt{\rho_{0,n}-h_1}W,A^\pr{1}\right)+Z,
\end{align*}
$V,W,Z$ are independent $n_1$-dimensional standard Gaussian vectors, and $Z'$ is an $n$-dimensional Gaussian vector.
By \eqref{e.P_to_tilde_P}, the definitions of $\Psi_0,\Psi_1$ in \eqref{e.Psi_0}, \eqref{e.Psi_l}, and the fact that $X,V,W$ have i.i.d.\ entries (see \ref{i.assump_iid_bdd} for the claim about $X$), the above can be rewritten as
\begin{align*}
    \bar F_{\beta,1,n}(0,h)= \frac{n_1}{n}\Psi_1(h_1,\beta;\rho_{0,n}) + \Psi_0(h_2),
\end{align*}
which, by \eqref{e.n_l/n} and \eqref{e.rho_cvg}, converges pointwise to
\begin{align*}
    \psi_{\beta, 1}(h) = \alpha_1\Psi_1(h_1,\beta;\rho_0) + \Psi_0(h_2).
\end{align*}
The results collected in Lemma~\ref{l.der_est} allow us to verify that $\psi_{\beta, 1}$ satisfies all the conditions imposed in Proposition~\ref{p.hopf}. Indeed, the above display shows that the decomposition as in \eqref{e.def_general_psi} exists, and both components are Lipschitz, nondecreasing and convex due to \eqref{e.partial_1_F_n}, \eqref{e.partial_2_F_n} and \eqref{e.2nd_d_barF_lbd}. Moreover, \eqref{e.psi_2} is ensured by \eqref{e.partial_2_F_n}, \eqref{e.n_l/n}, \eqref{e.rho_cvg}. Hence, Proposition~\ref{p.hopf} yields the existence of a weak solution $f_{\beta,1}$ satisfying $f_{\beta,1}(0,\cdot)=\psi_{\beta, 1}$ given by the formula \eqref{e.Hopf_formula_orig} with $L, \rho,\psi$ there replaced by $1,\rho_0,\psi_{\beta, 1}$, namely,
\begin{align*}
    f_{\beta,1}(t,h) = \sup_{z^\pr{1}\in\R_+\times [0,\frac{\alpha_{0}\rho_0}{2}]}\inf_{y^\pr{1}\in[0,\rho_0]\times\R_+}\left\{z^\pr{1}\cdot\left(h-y^\pr{1}\right)+\psi_{\beta, 1}\left(y^\pr{1}\right)+t\H_1\left(z^\pr{1}\right)\right\}
\end{align*}
for every $(t,h)\in\Omega_{\rho_0}$. Inserting the previous display and the formula for $\H_1$ in \eqref{e.H} into the above, and evaluating at $(t,h)=(1,0)$ yield that
\begin{align*}
    f_{\beta,1}(1,0) = \sup_{z^\pr{1}}\inf_{y^\pr{1}}\left\{ \alpha_1\Psi_1\left(y^\pr{1}_1,\beta;\rho_0\right) + \Psi_0\left(y^\pr{1}_2\right) -y^\pr{1}\cdot z^\pr{1} +\frac{2}{\alpha_0}z^\pr{1}_1z^\pr{1}_2\right\}
\end{align*}
which exactly matches the right-hand side of \eqref{e.limit} for $L=1$.

The above discussion also validates condition~\eqref{item:initial_cvg} in Proposition~\ref{p.cvg}.
Therefore, applying this proposition yields that
\begin{align*}
    \lim_{n\to\infty}\bar F_{\beta,1,n}(1,0) = f_{\beta,1}(1,0).
\end{align*}
Using \eqref{e.lim_EF^circ=lim_bar_F}, this proves \eqref{e.limit} for $L=1$.

Now, we assume that \eqref{e.limit} is verified for $L-1$.
Using \eqref{e.cap_S_mu}--\eqref{e.F_beta_L_n_(t,h)}, we can compute that
\begin{align*}
    \bar F_{\beta,L,n}(0,h)
    & = \frac{1}{n}\E\log\int\Pout_{\beta,L,n}\left(\mathcal Y^\pr{L}\Big| \sqrt{h_1}V+\sqrt{\rho_{L-1,n}-h_1}w\right)\d P_W(w) 
    \\
    &\quad\qquad + \frac{1}{n}\E\log\int e^{\sqrt{h_2}Y'\cdot x^\pr{L-1}-\frac{h_2}{2}|x^\pr{L-1}|^2}\d P_X(x)\d P_{A^{[1,L-1]}}(a)\\
    & = \mathtt{I}_1 + \mathtt{I}_2
\end{align*}
where
\begin{align*}
    \mathcal Y^\pr{L} =\sqrt{\beta}\varphi_L\left(\sqrt{h_1}V+\sqrt{\rho_{L-1,n}-h_1}W,A^\pr{L}\right)+Z.
\end{align*}
By \eqref{e.P_to_tilde_P} and the definition of $\Psi_L$ given in \eqref{e.Psi_l}, $\mathtt{I}_1=\frac{n_L}{n}\Psi_L(h_1,\beta;\rho_{L-1,n})$. Completing the square, we can rewrite $\mathtt{I}_2$ as
\begin{align}\label{e.I_2_split}
    \mathtt{I}_2 = \frac{1}{n}\E\log\int e^{-\frac{1}{2}|Y'-\sqrt{h_2}x^\pr{L-1}|^2}\d P_X(x)\d P_{A^{[1,L-1]}}(a) + \frac{1}{n}\E\log e^{\frac{1}{2}|Y'|^2}.
\end{align}
Recall the definition of $x^\pr{L-1}$ in \eqref{e.x^L-1}. We can define $x^\pr{L-2}$ in the same fashion and it is related to $x^\pr{L-1}$ via
\begin{align*}
    x^\pr{L-1}=\varphi_{L-1}\left(\frac{1}{\sqrt{n_{L-2}}}\Phi^\pr{L-1}x^\pr{L-2},a^\pr{L-1}\right).
\end{align*}
Inserting this and $\d P_{A^{[1,L-1]}} = \d P_{A^\pr{L-1}}\d P_{A^{[1,L-2]}}$ to \eqref{e.I_2_split}, and using \eqref{e.P_beta(y|x)} with $\beta,L$ replaced by $h_2,L-1$, we can see that the first term in \eqref{e.I_2_split} is given by
\begin{align*}
    \frac{1}{n}\E\log\int \Pout_{h_2,L-1,n}\left(Y'\bigg|\frac{1}{\sqrt{n_{L-2}}}\Phi^\pr{L-1}x^\pr{L-2}\right)\d P_X(x)\d P_{A^{[1,L-2]}}(a).
\end{align*}
Recall the definition of $Y'$ in \eqref{e.Y'}. Comparing it with \eqref{e.Y^circ}, we can see that $Y'$ is exactly the observable for the $(L-1)$-layer model with $\beta=h_2$. In view of the definition of the original free energy in \eqref{e.F^circ}, we can see that the above display is exactly $\E F^\circ_{h_2,L-1,n}$. Now, we turn to the second term in \eqref{e.I_2_split}. Using the definitions of $Y'$ in \eqref{e.Y'} and $\rho_{L-1,n}$ in \eqref{e.rho_l}, we can compute that this term is equal to
\begin{align*}
    \frac{1}{2n}\E|Y'|^2 = \frac{n_{L-1}}{2n}(1 + \rho_{L-1,n}h_2).
\end{align*}
We conclude that
\begin{align*}
    \bar F_{\beta,L,n}(0,h)=  \frac{n_L}{n}\Psi_L(h_1,\beta;\rho_{L-1,n}) + \E F^\circ_{h_2,L-1,n}+ \frac{n_{L-1}}{2n}(1 + \rho_{L-1,n}h_2),
\end{align*}
which, by the induction assumption, converges pointwise on $[0,\rho_{L-1})\times\R_+$ to
\begin{align*}\psi_{\beta,L}(h) = \alpha_L\Psi_L(h_1,\beta;\rho_{L-1}) + f^\circ_{h_2,L-1}+ \frac{\alpha_{L-1}}{2}(1 + \rho_{L-1}h_2)
\end{align*}
where $f^\circ_{h_2,L-1}$ is the right-hand side of \eqref{e.limit} with $\beta,L$ replaced by $h_2,L-1$, namely,
\begin{align*}
    f^\circ_{h_2,L-1} = \sup_{z^\pr{L-1}}\inf_{y^\pr{L-1}}\sup_{z^\pr{L-2}}\inf_{y^\pr{L-2}}\cdots \sup_{z^\pr{1}}\inf_{y^\pr{1}} \phi_{L-1}\left(h_2; y^\pr{1},\cdots,y^\pr{L-1};z^\pr{1},\cdots,z^\pr{L-1}\right)
\end{align*}
with $\phi_{L-1}$ defined analogously as in \eqref{e.phi_L}.

Again, as argued in the base case, Lemma~\ref{l.der_est} enables us to verify all conditions in Proposition~\ref{p.hopf}, which gives a weak solution $f_{\beta,L}$ satisfying $f_{\beta,L}(0,\cdot) = \psi_{\beta,L}$. Moreover, $f_{\beta,L}$ is given by the formula~\eqref{e.Hopf_formula_orig} with $\rho,\psi$ there replaced by $\rho_{L-1},\psi_{\beta,L}$, namely,
\begin{align*}
    f_{\beta,L}(t,h) = \sup_{z^\pr{L}}\inf_{y^\pr{L}}\left\{z^\pr{L}\cdot\left(h-y^\pr{L}\right)+\psi_{\beta,L}\left(y^\pr{L}\right)+t\H_L\left(z^\pr{L}\right)\right\}
\end{align*}
for every $(t,h)\in\Omega_{\rho_{L-1}}$, where $\sup$ is taken over $z^\pr{L}\in\R_+\times [0,\frac{\alpha_{L-1}\rho_{L-1}}{2}]$ and $\inf$ is taken over $y^\pr{L}\in[0,\rho_{L-1}]\times\R_+$.
Inserting the previous two displays and the expression of $\H_L$ in \eqref{e.H}  into the above, and evaluating at $(t,h)=(1,0)$, we obtain that
\begin{align*}
    &f_{\beta,L}(1,0) = \sup_{z^\pr{L}}\inf_{y^\pr{L}}\sup_{z^\pr{L-1}}\inf_{y^\pr{L-1}}\cdots \sup_{z^\pr{1}}\inf_{y^\pr{1}}
    \bigg\{-y^\pr{L}\cdot z^\pr{L} + \alpha_L\Psi_L\left(y^\pr{L}_1,\beta;\rho_{L-1}\right) 
    \\
    & \quad + \phi_{L-1}\left(y^\pr{L}_2; y^\pr{1},\cdots,y^\pr{L-1};z^\pr{1},\cdots,z^\pr{L-1}\right)+ \frac{\alpha_{L-1}}{2}\left(1 + \rho_{L-1}y^\pr{L}_2\right) + \frac{2}{\alpha_{L-1}}z^\pr{L}_1z^\pr{L}_2 \bigg\}.
\end{align*}
We can verify that the expression inside the curly brackets is given by \eqref{e.phi_L}, and thus $f_{\beta,L}(1,0)$ is exactly the right-hand side of \eqref{e.limit}.

Again, the above verifies condition~\eqref{item:initial_cvg} and allows us to apply Proposition~\ref{p.cvg} to obtain that
\begin{align*}\lim_{n\to\infty}\bar F_{\beta,L,n}(1,0) = f_{\beta,L}(1,0)
\end{align*}
which along with \eqref{e.lim_EF^circ=lim_bar_F} gives \eqref{e.limit} and completes the proof of Theorem~\ref{t}.

\subsection{Proof of Proposition~\ref{p.cvg}} \label{s.proof_cvg}

For lighter notation, we suppress some of the subscripts and simply write
\begin{align*}
    F_n = F_{\beta,L,n},\quad \psi=\psi_{\beta,L},\quad f=f_{\beta,L}, \quad \rho=\rho_{L-1},\quad \rho_n = \rho_{L-1,n}.
\end{align*}

We remark that it suffices to show
\begin{align}\label{e.cvg_L^1}
    \lim_{n\to\infty}\sup_{t\in[0,1]}\int_{E_{n,t}(R)}\big|\bar F_n(t,h)-f(t,h)\big|\d h=0
\end{align}
for every $R>0$, where 
\begin{align}\label{e.E_n,t}
    E_{n,t}(R) = [0,(\rho\wedge\rho_n)(1-t)]\times [0,R(1-t)].
\end{align}
Indeed, for every $\rho'<\rho$, we have $\rho'<\rho\wedge\rho_n$ for sufficiently large $n$ due to assumption~\eqref{item:rho_cvg}. Then, \eqref{e.cvg_L^1} together with Fubini's theorem implies that the integral of $|\bar F_n-f|$ over $\Omega_{\rho'}\cap \{h_2\leq R(1-t)\}$ decays to $0$ as $n\to\infty$, which further implies that $\bar F_n$ converges to $f$ pointwise a.e.\ on $\Omega_{\rho'}\cap \{h_2\leq R(1-t)\}$. By enlarging $R$, we conclude that this convergence holds pointwise everywhere on $\Omega_{\rho'}$.

Let us show \eqref{e.cvg_L^1}. Henceforth, we denote by $C$ a positive constant independent of $n,t,h$, which may change from instance to instance. We also absorb $R$ and $\rho$ into $C$. Define $w_n = \bF_n -f$ and
\begin{align}\label{e.def_r_n}
    r_n = \partial_t\bF_n - \H_L(\nabla \bar  F_n).
\end{align}
Then, by the definition of $\H_L$ in \eqref{e.H}, we have that
\begin{align}\label{e.w_n}
    \partial_t w_n = b_n\cdot \nabla w_n+r_n
\end{align}
where
\begin{align}\label{e.def_b_n}
    b_n =(b_{n,1},\ b_{n,2})= \frac{2}{\alpha_{L-1}}(\partial_2 f,\ \partial_1 \bF_n).
\end{align}
For $\delta\in(0,1)$, let $\phi_\delta:\R\to \R_+$ be given by
\begin{align}\label{e.def_phi_delta}
    \phi_\delta(x)=\left(\delta+x^2\right)^\frac{1}{2},\quad\forall x\in \R,
\end{align}
which  serves as a smooth approximation of the absolute value. 
Take $v_n = \phi_\delta(w_n)$ and multiply both sides of \eqref{e.w_n} by $\phi'_\delta(w_n)$ to see
\begin{align}\label{e.d_tv_n}
    \partial_t v_n = b_n\cdot \nabla v_n +\phi'_\delta(w_n)r_n
\end{align}
The Lipschitzness of $f$ and that of $\bar F_n$ uniform in $n$ due to \eqref{e.partial_1_F_n} and \eqref{e.partial_2_F_n} imply that,  uniformly in $n$, $\delta$,
\begin{align}\label{e.|grad_v|<C}
    |\nabla v_n|\leq C.
\end{align}
By $\lim_{n\to\infty}\bar F_n(0,0)=\psi(0)=f(0,0)$ due to assumption~\eqref{item:initial_cvg}, we also get from the aforementioned Lipschitzness that
\begin{align}\label{e.F_n-f_bdd}
    \sup_{\Omega_{\rho\wedge\rho_n}\cap\{h_2\leq R\}}|\bar F_n - f|\leq C 
\end{align}
uniformly in $n$, which implies that, uniformly in $n$, $\delta$,
\begin{align}\label{e.v_n_bdd}
    \sup_{\Omega_{\rho\wedge\rho_n}\cap\{h_2\leq R\}}|v_n|\leq C .
\end{align}
Recall the mollifier $\xi_\eps$ given in \eqref{e.mollifier} and that the mollification is well-defined on domain $\Omega_{\rho\wedge\rho_n,\eps}$ described in \eqref{e.Omega_rho_eps}. Let us regularize $b_n$ by setting $b^\eps_{n,i} = b_{n,i}*\xi_\eps  $, with the convolution taken in $h$. For $(t,h)\in\Omega_{\rho\wedge\rho_n,\eps}$, we can rewrite \eqref{e.d_tv_n} as 
\begin{align}\label{e.v_n_eq}
    \partial_t v_n = \div(v_nb^\eps_n) - v_n\div b^\eps_n + (b_n - b^\eps_n)\cdot \nabla v_n +\phi'_\delta(w_n)r_n.
\end{align}

By \eqref{e.def_b_n}, \eqref{e.partial_1_F_n}, \eqref{e.partial_2_F_n} and Definition~\ref{def.weak_sol}~\eqref{item.1_der_weak_sol}, there is $C>0$ such that the following hold for all $n$, all $\eps \in (0,1)$ and all $ (t,h)\in\Omega_{\rho\wedge\rho_n,\eps}$,
\begin{gather}
    \|b_n -b_n^\eps\|_\infty=  o_\eps(1);\label{e.b_n-b_n^eps}\\
    \|b_n^\eps\|_\infty \leq \|b_n\|_\infty \leq C;\notag\\
    b_{n,1}^\eps\in [ 0,\rho],\quad b_{n,2}^\eps\in [ 0,C] .\label{e.b_n_range}
\end{gather}
Using \eqref{e.2nd_d_barF_lbd} and \eqref{item:3} in Definition~\ref{def.weak_sol}, we also have that, for $(t,h)\in\Omega_{\rho\wedge\rho_n,\eps}$,
\begin{align}\label{e.div_b_n^eps_lower_bound}
    \div b_n^\eps =\frac{2}{\alpha_{L-1}}\left( \partial_1 \partial_2 \big( f *\xi_\eps\big)+ \partial_1 \partial_2 \big(\bF_n*\xi_\eps\big)\right) \geq 0. 
\end{align}

Fix $R > \sup_{n,\eps}\|b_n^\eps\|_\infty$. In the following, we absorb $R$ into $C$. Let $\eta>0$ be specified later. Consider the following sets, indexed by $t\in[0,1-\frac{2}{\rho\wedge\rho_n}\eta]$,
\begin{align}
    D_t & = [\eta,(\rho\wedge\rho_n)(1-t)-\eta]\times [\eta,R(1-t)],\label{e.D_t_cvg}\\
    \Gamma_{1,t}&= [\eta,(\rho\wedge\rho_n)(1-t)-\eta]\times \{R(1-t)\},\nonumber\\
    \Gamma_{2,t}&= \{(\rho\wedge\rho_n)(1-t)-\eta\}\times [\eta, R(1-t)],\nonumber
\end{align}
where, for simplicity, we suppressed the dependence on $n,\eta$ in the notation.

Let us consider the object
\begin{align}\label{e.def_J(t)}
    J_\delta(t) = \int_{D_t} v_n(t,h)\d h = \int_{D_t} \phi_\delta\big(w_n(t,h)\big)\d h.
\end{align}
Choose $\eps<\eta$ to ensure that $\bigcup_{t\in[0,1-\frac{2}{\rho\wedge\rho_n}\eta]}(\{t\}\times D_t)\subset \Omega_{\rho\wedge\rho_n,\eps}$. Differentiate $J_\delta(t)$ in $t$ and use \eqref{e.v_n_eq} to see
\begin{align*}
    \frac{\d}{\d t}J_\delta(t) &= \int_{D_t} \partial_t  v_n - R\int_{\Gamma_{1,t}}  v_n - \rho\wedge \rho_n\int_{\Gamma_{2,t}}  v_n\\
    &= \int_{\Gamma_{1,t}}( \mathbf{n}\cdot b^\eps_n -R)  v_n + \int_{\Gamma_{2,t}}( \mathbf{n}\cdot b^\eps_n -\rho\wedge \rho_n)  v_n \\ &  \quad +\int_{\partial D_t \setminus (\Gamma_{1,t}\cup\Gamma_{2,t})}( \mathbf{n}\cdot b^\eps_n) v_n  + \int_{D_t}\Big(-v_n \div  b_n^\eps + (b_n-b_n^\eps)\cdot \nabla v_n + \phi'_\delta(w_n)r_n\Big).
\end{align*}
Here in the second identity, we used integration by parts on the integral of $\div(v_nb^\eps_n)$. The first integral on the second line is nonpositive due to the choice of $R$. 
Then second integral on that line is bounded from above by $C|\rho_n-\rho|$ due to \eqref{e.v_n_bdd}, \eqref{e.b_n_range} and the fact that on $\Gamma_{2,t}$ the outer normal $\mathbf n=(1,0)$. 
On the last line of the display, the first integral is nonpositive due to that $\mathbf{n}\in -\R^2_+$ on $\partial D_t \setminus(\Gamma_{1,t}\cup\Gamma_{2,t})$, and \eqref{e.b_n_range}. It is clear from \eqref{e.def_phi_delta} that $\|\phi'_\delta\|_\infty\leq 1$. By this, \eqref{e.|grad_v|<C}, \eqref{e.b_n-b_n^eps} and \eqref{e.div_b_n^eps_lower_bound}, the integrand in the last integral is bounded from above by $C(o_\eps(1)+|r_n|)$. Therefore, sending $\eps\to0$, we conclude that, for $t\in[0,1-\frac{2}{\rho\wedge\rho_n}\eta]$,
\begin{align}\label{e.dt_J_n}
    \frac{\d}{\d t}J_\delta(t)\leq C|\rho_n-\rho| + \int_{D_t}|r_n|.
\end{align}
Recall the definition of $r_n$ in \eqref{e.def_r_n}. Proposition~\ref{p.approx_hj} gives an upper bound for $|r_n|$, which along with Jensen's inequality gives that
\begin{align}\label{e.int_rn_up_bdd_1}
    \int_{D_t}|r_n| \leq C\bigg(\int_{D_t}\frac{1}{n}\partial^2_2\bar F_n +  \E\int_{D_t}\big(\partial_2 F_n -  \partial_2\bar F_n\big)^2
    \bigg)^\frac{1}{2}+a_n
\end{align}
for $a_n$ bounded as in \eqref{e.a_n_bd}.
In view of \eqref{e.partial_2_F_n}, the first integral on the right-hand side of \eqref{e.int_rn_up_bdd_1} can be bounded by $Cn^{-1}$.
For the last integral in \eqref{e.int_rn_up_bdd_1}, we will show that
\begin{align}\label{e.derivative_concentration}
    \E\int_{D_t}\big|\partial_2( F_n -  \bF_n)\big|^2\leq \Delta_{1,n}^2\eta^{-\frac{1}{2}},
\end{align}
for some $\Delta_{1,n}$ converging to $0$ as $n\to\infty$. 
These estimates imply that
\begin{align*}\int_{D_t}|r_n| \leq C(n^{-\frac{1}{2}}+\Delta_{1,n}\eta^{-\frac{1}{4}}+a_n).
\end{align*}

This along with \eqref{e.dt_J_n} implies that
\begin{align*}
    J_\delta(t) \leq J_\delta(0)+ C(|\rho_n-\rho|+n^{-\frac{1}{2}}+\Delta_{1,n}\eta^{-\frac{1}{4}}+a_n),\quad t\in\left[0,1-\frac{2}{\rho\wedge\rho_n}\eta\right].
\end{align*}
Note that $\lim_{n\to\infty}|\rho_n-\rho|=0$ by assumption~\eqref{item:rho_cvg} and $\lim_{n\to\infty}a_n=0$ due to \eqref{e.a_n_bd}, assumptions~\eqref{i.concent_norm} and~\eqref{item:concentration}, and \eqref{e.n_l/n}.
By \eqref{e.def_phi_delta} and \eqref{e.def_J(t)}, we have that
\begin{align*}
   \lim_{\delta\to0} J_\delta(0)= \int_{D_0}\big|\bF_n(0,h)-f(0,h)\big|\d h
\end{align*}
which converges to $0$ as $n\to\infty$ by assumption~\eqref{item:initial_cvg}, \eqref{e.F_n-f_bdd} and the bounded convergence theorem.
Hence, sending $\delta\to0$, we derive that
\begin{align*}
    \sup_{t\in[0,1-\frac{2}{\rho\wedge\rho_n}\eta]}\int_{D_t}\big|\bF_n(t,h)-f(t,h)\big|\d h\leq C \big(\Delta_{1,n}\eta^{-\frac{1}{4}} + \Delta_{2,n}\big),
\end{align*}
for some $\Delta_{2,n}$ that decays to $0$ as $n\to\infty$.
We want to extend the above result from integrating over $D_t$ to $E_{n,t}(R)$ for $t\in[0,1]$. The definitions of $E_{n,t}(R)$ in \eqref{e.E_n,t} and $D_t$ in \eqref{e.D_t_cvg} give that
\begin{align*}
    |E_{n,t}(R)\setminus D_t|&\leq C\eta,\quad\forall t\in \left[0,\ \frac{2}{\rho\wedge\rho_n}\eta\right],
    \\
    |E_{n,t}(R)|&\leq C\eta,\quad\forall t\in \left[\frac{2}{\rho\wedge\rho_n}\eta,\ 1 \right].
\end{align*}
These along with \eqref{e.F_n-f_bdd} yield that
\begin{align*}
    \sup_{t\in[0,1-\frac{2}{\rho\wedge\rho_n}\eta]}\int_{E_{n,t}(R)\setminus D_t}\big|\bF_n(t,h) - f(t,h)\big|\d h  &\leq C\eta,
    \\
    \sup_{t\in [1-\frac{2}{\rho\wedge\rho_n}\eta,1]}\int_{E_{n,t}(R)}\big|\bF_n(t,h) - f(t,h)\big|\d h  &\leq C\eta.
\end{align*}
Therefore, we obtain that
\begin{align*}
    \sup_{t\in[0,1]}\int_{E_{n,t}(R)}\big|\bF_n(t,h)-f(t,h)\big|\d h\leq C \big(\eta+\Delta_{1,n}\eta^{-\frac{1}{4}} + \Delta_{2,n}\big).
\end{align*}
Insert $\eta = \Delta_{1,n}^\frac{4}{5}$ into the above display to see that the right-hand side of the above is bounded by $C(\Delta_{1,n}^\frac{4}{5}+ \Delta_{2,n})$, which gives the desired result \eqref{e.cvg_L^1}.

It remains to verify \eqref{e.derivative_concentration}.

\subsubsection{Proof of \eqref{e.derivative_concentration}}
By writing
\begin{align*}
    \E\int_{D_t}\big|\partial_2(F_n - \bF_n)\big|^2 = \int_{\eta}^{(\rho\wedge\rho_n)(1-t)}\left(\E\int_{\eta}^{R(1-t)} \big|\partial_2(F_n - \bF_n)\big|^2 \d h_2\right) \d h_1,
\end{align*}
it suffices to show that the term inside the parentheses is $o(1)\eta^{-\frac{1}{2}}$ uniformly in $t, h_1$. Now, let us fix any $(t,h_1)$ and investigate the integration with respect to $h_2$. 
Integration by parts yields that
\begin{align}
    \int_{\eta}^{R(1-t)}\big|\partial_2(F_n - \bF_n)\big|^2= (F_n - \bF_n)\partial_2(F_n - \bF_n)\big|_{h_2 = R(1-t)} - (F_n - \bF_n)\partial_2(F_n - \bF_n)\big|_{h_2 = \eta}\notag 
    \\
    -  \int_{\eta}^{R(1-t)}(F_n - \bF_n)\partial^2_2(F_n - \bF_n)\notag
    \\
    \leq \|F_n-\bar F_n\|_{L_{h_2}^\infty([0,R])}\bigg(\Big|\partial_2(F_n - \bF_n)\big|_{h_2 = R(1-t)}\Big|+\Big|\partial_2(F_n - \bF_n)\big|_{h_2 = \eta}\Big| \label{e.IBP_step}
    \\
    +\int_{\eta}^{R(1-t)}\big|\partial^2_2(F_n - \bF_n)\big|\bigg). \notag
\end{align}
Let us estimate the last integral. By \eqref{e.2nd_d_barF_lbd} and \eqref{e.2nd_d_F_lbd},
\begin{align*}
    \partial^2_2 \bF_n\geq 0,\qquad \partial^2_2 F_n + Cn^{-\frac{1}{2}}\htwo^{-\frac{3}{2}}|Z'|\geq 0,
\end{align*}
which implies that
\begin{align*}
    \int_\eta^{R(1-t)}\big|\partial_2^2(F_n+\bF_n)\big|&\leq \int_\eta^{R(1-t)} \big|\partial_2^2 F_n\big|+\big|\partial_2^2\bF_n\big|\\
    &\leq \int_\eta^{R(1-t)} \big(\partial_2^2 F_n + \partial_2^2\bF_n\big) + \int_\eta^{R(1-t)} 2Cn^{-\frac{1}{2}}\htwo^{-\frac{3}{2}}|Z'|.
\end{align*}
Applying integration by parts to the first integral after the second inequality gives that
\begin{align*}
     \int_\eta^{R(1-t)}&\big|\partial_2^2(F_n+\bF_n)\big| 
     \\
     &\leq \big(\big|\partial_2 F_n\big|+\big|\partial_2\bF_n\big|\big)\Big|_{h_2 =R(1-t)}- \big(\big|\partial_2 F_n\big|+\big|\partial_2\bF_n\big|\big)\Big|_{h_2 =\eta} + Cn^{-\frac{1}{2}}\eta^{-\frac{1}{2}}|Z'|\\
     &\leq C(1+ n^{-\frac{1}{2}}\eta^{-\frac{1}{2}}|Z'|)
\end{align*}
where the last inequality follows from the estimates of $\partial_2 \bar F_n$ in \eqref{e.partial_2_F_n} and $\partial_2 F_n$ in \eqref{e.1st_der_F_n_est}.
Insert estimates \eqref{e.partial_2_F_n} and  \eqref{e.1st_der_F_n_est}, and the above display into \eqref{e.IBP_step} to get that
\begin{align*}
    \int_\eta^{R(1-t)}\big|\partial_2(F_n - \bF_n)\big|^2\leq C\|F_n-\bF_n\|_{L_{h_2}^\infty([0,R])}\Big(1+n^{-\frac{1}{2}}\eta^{-\frac{1}{2}}|Z'|\Big).
\end{align*}
Take expectations on both sides of this inequality, invoke the Cauchy--Schwarz inequality and use assumption~\eqref{item:concentration} to conclude \eqref{e.derivative_concentration}.

\section{Auxiliary results}\label{s.aux}

We collect proofs of Lemma~\ref{l.cvg_rho} which verifies \eqref{e.rho_cvg}, Lemma~\ref{l.cvg_norm} which gives the concentration of $\frac{1}{n_l}\left|X^\pr{l}\right|^2$, and Lemma~\ref{l.concent} which shows that the concentration condition~\eqref{item:concentration} in Proposition~\ref{p.cvg} always holds.

\subsection{Convergence of the averaged norm}

Recall $\rho_{l,n}$ from \eqref{e.rho_l}.

\begin{lemma}\label{l.cvg_rho}
Assume \ref{i.assump_iid_bdd}--\ref{i.assump_Phi_Gaussian} for some $L\in \N$. For each $l\in\{ 0,1,\dots,L\}$, \eqref{e.rho_cvg} holds for
$\rho_l$ defined iteratively by
\begin{align}
    \rho_0 &= \E |X_1|^2\notag\\
    \rho_l &= \E \left|\varphi_l\left(\sqrt{\rho_{l-1}}\Phi^\pr{l}_{11},A^\pr{l}_1\right)\right|^2. \label{e.formula_rho_l}
\end{align}
\end{lemma}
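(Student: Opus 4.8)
The plan is to argue by induction on $l$, and — since it is needed for the recursion and is anyway essentially the content of Lemma~\ref{l.cvg_norm} — to carry along the concentration estimate
\begin{align*}
    \E\Big[\Big(\tfrac{1}{n_l}\big|X^\pr{l}\big|^2-\rho_{l,n}\Big)^2\Big]\leq \frac{C_l}{n},\qquad\forall n\in\N,
\end{align*}
for some constant $C_l$. Write $V_l=\tfrac1{n_l}|X^\pr{l}|^2$, so $\E V_l=\rho_{l,n}$. The base case $l=0$ is immediate: by \ref{i.assump_iid_bdd}, $X^\pr{0}=X$ has i.i.d.\ bounded entries whose law is independent of $n$, so $\rho_{0,n}=\E|X_1|^2=\rho_0$ exactly and $\var(V_0)=\tfrac1n\var(|X_1|^2)\leq\tfrac Cn$.

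For the inductive step, fix $l\geq1$ and condition on $X^\pr{l-1}$. By \ref{i.assump_Phi_Gaussian}, the rows of $\Phi^\pr{l}$ are i.i.d.\ standard Gaussian vectors independent of $X^\pr{l-1}$ and of $(A^\pr{l}_j)_j$; hence, conditionally on $X^\pr{l-1}$, the pairs $\big(\tfrac1{\sqrt{n_{l-1}}}(\Phi^\pr{l}X^\pr{l-1})_j,A^\pr{l}_j\big)$, $1\leq j\leq n_l$, are i.i.d., the first coordinate being $\mathcal N(0,V_{l-1})$-distributed and independent of $A^\pr{l}_j$. Setting
\begin{align*}
    g_l(v)=\E\big[\varphi_l(\sqrt v\,\gamma,A^\pr{l}_1)^2\big],\qquad v\geq0,
\end{align*}
with $\gamma$ a standard Gaussian independent of $A^\pr{l}_1$, we thus obtain $\E[V_l\mid X^\pr{l-1}]=g_l(V_{l-1})$, $\var(V_l\mid X^\pr{l-1})\leq\|\varphi_l\|_\infty^4/n_l$, and $\rho_{l,n}=\E[g_l(V_{l-1})]$. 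The function $g_l$ is bounded by $\|\varphi_l\|_\infty^2$ and, crucially, Lipschitz: writing $g_l(v)=\E[h(\sqrt v\,\gamma,A^\pr{l}_1)]$ with $h=\varphi_l^2$ (bounded together with $\partial_s h,\partial_s^2 h$ by \ref{i.assump_varphi_l_2^l_diff}), one differentiates in $v$ and integrates by parts in $\gamma$ to get $g_l'(v)=\tfrac12\E[\partial_s^2 h(\sqrt v\,\gamma,A^\pr{l}_1)]$, bounded by a constant $L_l$; the integration by parts is precisely what removes the spurious $v^{-1/2}$ produced by the chain rule, so the bound is uniform down to $v=0$.

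The induction now closes quickly. By the law of total variance,
\begin{align*}
    \var(V_l)=\E\big[\var(V_l\mid X^\pr{l-1})\big]+\var\big(g_l(V_{l-1})\big)\leq\frac{\|\varphi_l\|_\infty^4}{n_l}+L_l^2\,\var(V_{l-1}),
\end{align*}
so, using \eqref{e.n_l/n}, the concentration estimate at level $l$ follows from the one at level $l-1$. For the convergence, the inductive hypothesis $\rho_{l-1,n}\to\rho_{l-1}$ and that estimate give $\E|V_{l-1}-\rho_{l-1}|\leq\var(V_{l-1})^{1/2}+|\rho_{l-1,n}-\rho_{l-1}|\to0$, whence
\begin{align*}
    \big|\rho_{l,n}-g_l(\rho_{l-1})\big|=\big|\E[g_l(V_{l-1})]-g_l(\rho_{l-1})\big|\leq L_l\,\E|V_{l-1}-\rho_{l-1}|\longrightarrow0.
\end{align*}
Since $\Phi^\pr{l}_{11}$ is standard Gaussian, $g_l(\rho_{l-1})=\E|\varphi_l(\sqrt{\rho_{l-1}}\,\Phi^\pr{l}_{11},A^\pr{l}_1)|^2$, which is exactly $\rho_l$ in \eqref{e.formula_rho_l}, establishing \eqref{e.rho_cvg}.

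It remains to propagate positivity: $\rho_0=\E|X_1|^2>0$ by \ref{i.assump_iid_bdd}, and if $\rho_{l-1}>0$ then $\sqrt{\rho_{l-1}}\,\Phi^\pr{l}_{11}$ has full support on $\R$, so by continuity of $\varphi_l$ and the non-degeneracy in \ref{i.assump_varphi_l_2^l_diff} the integrand defining $g_l(\rho_{l-1})$ is strictly positive on a set of positive measure, giving $\rho_l>0$. The routine parts of this plan are the conditioning identity and the total-variance recursion; the step requiring genuine care is the uniform Lipschitz bound on $g_l$ near $v=0$ (handled by the Gaussian integration by parts above), and, to a lesser extent, the positivity step, which is the only place the "not identically zero" hypotheses are used.
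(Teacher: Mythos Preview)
Your proof is correct and structurally the same as the paper's: both induct on $l$, condition on $X^\pr{l-1}$, and reduce to the scalar function $g_l(v)=\E[\varphi_l(\sqrt{v}\,\gamma,A^\pr{l}_1)^2]$. The paper only invokes $\tfrac12$-H\"older continuity of $g_l$ (from boundedness of $\varphi_l'$) to pass the limit, whereas you upgrade this to a uniform Lipschitz bound via the Gaussian integration by parts $g_l'(v)=\tfrac12\E[\partial_s^2(\varphi_l^2)]$; both suffice for the convergence, but your version lets you propagate the $O(1/n)$ variance bound through the law-of-total-variance recursion. This is a genuine simplification: it yields Lemma~\ref{l.cvg_norm} as an immediate byproduct, bypassing the paper's considerably heavier route to that lemma through the multi-layer Gaussian integration-by-parts machinery (Lemma~\ref{l.multip_GIBP} and Corollary~\ref{c.GIBP}). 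You also explicitly verify $\rho_l>0$, which the paper asserts but does not argue in its proof.
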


In \eqref{e.formula_rho_l}, $\Phi^\pr{l}_{11}$ is a standard Gaussian random variable independent of $A^\pr{l}_1$. Examining the proof below, we can see that the lemma is still valid (with $\rho_0$ defined as a limit) if we replace \ref{i.assump_iid_bdd} and \ref{i.assump_varphi_l_2^l_diff} by weaker assumptions that $\frac{1}{n}|X|^2$ converges in probability together with \ref{i.assump_w_|X|<sqrt_n}, and that $\varphi_l$ is Lipschitz for all $l$.

\begin{proof}
It suffices to show that
\begin{align}\label{e.var_avg_L^2}
    \lim_{n\to\infty}\E \left|\frac{\left|X^\pr{l}\right|^2}{n_l}-\rho_l\right|^2=0.
\end{align}
Since $X^\pr{0}$ is assumed to consist of bounded i.i.d.\ entries and $\rho_0 = \E|X_j|^2$ for all $j=1,2,\dots,n$, it is immediate that \eqref{e.var_avg_L^2} holds for $l=0$.
We proceed by induction.
Now, we assume that \eqref{e.var_avg_L^2} holds for $l-1$. Let us denote by $\E^\pr{l}$ the expectation with respect to $\Phi^\pr{l}$ and $A^\pr{l}$. We start by writing
\begin{align}\label{eq:split_rho_cvg}
    \E\left|\frac{\left|X^\pr{l}\right|^2}{n_l}- \rho_l\right|^2\leq 2 \E\left|\frac{\left|X^\pr{l}\right|^2}{n_l}- \E^\pr{l}\frac{\left|X^\pr{l}\right|^2}{n_l}\right|^2 + 2 \E\left| \E^\pr{l}\frac{\left|X^\pr{l}\right|^2}{n_l}- \rho_l\right|^2.
\end{align}
We start by estimating the first term on the right. It is clear from \eqref{e.X^l} that, conditioned on $X^\pr{l-1}$, $(|X^\pr{l}_j|^2)_{j=1}^{n_l}$ is a sequence of i.i.d. random varaibles. Hence, the first term can be rewritten as
\begin{align*}
    2\E\frac{1}{n^2_l}\sum_{j=1}^{n_l}\E^\pr{l}\left|\left|X^\pr{l}_j\right|^2-\E^\pr{l}\left|X^\pr{l}_j\right|^2\right|^2.
\end{align*}
Since $X^\pr{l}_j$ is bounded, we can see that the first term is bounded by $Cn_l^{-1}$. Now, we turn to the second term. Using \eqref{e.X^l}, we can compute that
\begin{align*}
    \E^\pr{l}\frac{\left|X^\pr{l}\right|^2}{n_l} = g\left(\frac{\left|X^\pr{l-1}\right|^2}{n_{l-1}}\right) 
\end{align*}
where 
\begin{align*}
    g(\sigma) = \E\left |\varphi_l\left(\sqrt{\sigma}\Phi_{11}^\pr{l},A_1^\pr{l}\right)\right|^2.
\end{align*}
Since $\varphi_l$ is assumed to have bounded derivatives, we can see that $g$ is $\frac{1}{2}$-H\"older continuous.
Rewriting \eqref{e.formula_rho_l} as $\rho_l = g(\rho_{l-1})$, we can bound the second term in \eqref{eq:split_rho_cvg} by
\begin{align*}
    2\E\left|g\left(\frac{\left|X^\pr{l-1}\right|^2}{n_{l-1}}\right) - g(\rho_{l-1})\right|^2 \leq C\E\left|\frac{\left|X^\pr{l-1}\right|^2}{n_{l-1}} - \rho_{l-1}\right|
\end{align*}
which converges to $0$ due to the induction assumption \eqref{e.var_avg_L^2} for $l-1$.
This finishes the induction step showing that \eqref{e.var_avg_L^2} holds for $l$ and thus completes the proof.
\end{proof}

\subsection{Concentration of the norm}

The goal is to show the following lemma.

\begin{lemma}\label{l.cvg_norm}
Assume \ref{i.assump_iid_bdd}--\ref{i.assump_Phi_Gaussian} for some $L\in \N$. There is a constant $C>0$ such that, for every $n\in\N$,
\begin{align*}
    \var \left[\frac{1}{n_{L}}\left|X^\pr{L}\right|^2\right]\leq \frac{C}{n}.
\end{align*}
\end{lemma}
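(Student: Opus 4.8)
The plan is to prove the bound by induction on the number of layers $L$, peeling off one layer at a time with the law of total variance. For the base case $L=0$ we have $X^\pr{0}=X$ with i.i.d.\ entries in $[-1,1]$, so $\var\big[\tfrac1n|X|^2\big]=\tfrac1n\var(X_1^2)\le\tfrac1n$. For the inductive step, assume $\var\big[\tfrac{1}{n_{L-1}}|X^\pr{L-1}|^2\big]\le C/n$ for all $n$. Since $X^\pr{L-1}$ depends only on $X,\Phi^{[1,L-1]},A^{[1,L-1]}$ and is thus independent of $(\Phi^\pr{L},A^\pr{L})$, conditioning on $X^\pr{L-1}$ gives
\begin{align*}
    \var\left[\frac{|X^\pr{L}|^2}{n_L}\right]
    =\E\left[\var\left(\frac{|X^\pr{L}|^2}{n_L}\,\Big|\,X^\pr{L-1}\right)\right]
    +\var\left[\E\left(\frac{|X^\pr{L}|^2}{n_L}\,\Big|\,X^\pr{L-1}\right)\right].
\end{align*}

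For the first term, note that conditionally on $X^\pr{L-1}$ the summands $|X^\pr{L}_j|^2=\varphi_L\big(\tfrac{1}{\sqrt{n_{L-1}}}(\Phi^\pr{L}X^\pr{L-1})_j,A^\pr{L}_j\big)^2$ are independent over $j$ (the $j$-th term involving only the $j$-th row of $\Phi^\pr{L}$ and $A^\pr{L}_j$, and the rows of $\Phi^\pr{L}$ together with the $A^\pr{L}_j$ form an i.i.d.\ family) and each lies in $[0,\|\varphi_L\|_\infty^2]$; hence the conditional variance is at most $\|\varphi_L\|_\infty^4/n_L\le C/n$, using $\sup_n n/n_L<\infty$ from \eqref{e.n_l/n}. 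For the second term, conditionally on $X^\pr{L-1}$ the quantity $\tfrac{1}{\sqrt{n_{L-1}}}(\Phi^\pr{L}X^\pr{L-1})_j$ is a centered Gaussian of variance $\tfrac{1}{n_{L-1}}|X^\pr{L-1}|^2$, while $A^\pr{L}_j\sim P_{A^\pr{L}_1}$ is independent of it, so
\begin{align*}
    \E\left(\frac{|X^\pr{L}|^2}{n_L}\,\Big|\,X^\pr{L-1}\right)=g_L\!\left(\frac{|X^\pr{L-1}|^2}{n_{L-1}}\right),\qquad g_L(\sigma):=\E\big[\varphi_L(\sqrt{\sigma}\,Z,A^\pr{L}_1)^2\big],
\end{align*}
where $Z$ is a standard Gaussian independent of $A^\pr{L}_1$. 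The second term therefore equals $\var\big[g_L\big(\tfrac{1}{n_{L-1}}|X^\pr{L-1}|^2\big)\big]$, and once $g_L$ is known to be Lipschitz this is at most $\|g_L'\|_\infty^2\var\big[\tfrac{1}{n_{L-1}}|X^\pr{L-1}|^2\big]\le C/n$ by the induction hypothesis; enlarging $C$ to absorb the finitely many small $n$ closes the induction.

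The only nontrivial point — and the one I expect to be the main obstacle — is the Lipschitz continuity of $g_L$ on $\R_+$. Differentiating naively, $g_L'(\sigma)=\tfrac{1}{2\sqrt\sigma}\E\big[Z\,\partial_1(\varphi_L^2)(\sqrt\sigma Z,A^\pr{L}_1)\big]$ carries a spurious $\sigma^{-1/2}$ factor; the remedy is a single Gaussian integration by parts in $Z$, which gives $g_L'(\sigma)=\tfrac12\E\big[\partial_1^2(\varphi_L^2)(\sqrt\sigma Z,A^\pr{L}_1)\big]=\E\big[\big((\varphi_L')^2+\varphi_L\varphi_L''\big)(\sqrt\sigma Z,A^\pr{L}_1)\big]$, which is bounded by $\|\varphi_L'\|_\infty^2+\|\varphi_L\|_\infty\|\varphi_L''\|_\infty$ thanks to the boundedness of $\varphi_L$ and its first two derivatives in \ref{i.assump_varphi_l_2^l_diff} (in fact already in \ref{i.assump_w_varphi}). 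Differentiation under the expectation is justified by dominated convergence with these bounds, and since $g_L$ is continuous on $[0,\infty)$ and $C^1$ with bounded derivative on $(0,\infty)$ it is Lipschitz on $[0,\infty)$. Note that the $\tfrac12$-H\"older bound on $g_L$ used in the proof of Lemma~\ref{l.cvg_rho} would propagate only an $O(n^{-1/2})$ estimate, so this improvement to a Lipschitz bound is precisely what allows the $O(n^{-1})$ rate to pass through every layer.
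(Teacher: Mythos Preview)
Your proof is correct and takes a genuinely simpler route than the paper. The paper writes $\var\big[\tfrac{1}{n_L}|X^\pr{L}|^2\big]$ as a telescoping sum $\sum_{l=0}^L \E\big[(g_l(X^\pr{l}) - \E[g_l(X^\pr{l})\mid X^\pr{l-1}])^2\big]$ with $g_l(X^\pr{l}) = \E\big[\tfrac{1}{n_L}|X^\pr{L}|^2 \mid X^\pr{l}\big]$, and then bounds each summand with $l\le L-1$ by estimating $\partial g_l/\partial X^\pr{l}_{i_l}$ via a chain-rule expansion through all the higher layers, controlled by the multiple-Gaussian-integration-by-parts machinery of Corollary~\ref{c.GIBP}; this is what brings in the $2^l$-th-order differentiability of \ref{i.assump_varphi_l_2^l_diff}. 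Your induction peels off only one layer and exploits the key observation that $\E\big[\tfrac{1}{n_L}|X^\pr{L}|^2 \mid X^\pr{L-1}\big]$ depends on $X^\pr{L-1}$ \emph{only through the scalar} $\tfrac{1}{n_{L-1}}|X^\pr{L-1}|^2$; together with the Lipschitz bound on the one-dimensional map $g_L$ (obtained from a single Gaussian integration by parts), this reduces the problem directly to the inductive hypothesis. Your argument needs only second derivatives of each $\varphi_l$, so it actually proves the lemma under \ref{i.assump_w_varphi} rather than the full \ref{i.assump_varphi_l_2^l_diff}. The paper's heavier machinery is not wasted, however: it is genuinely needed for Lemma~\ref{l.concent}, where the relevant conditional expectations of the free energy depend on $X^\pr{l}$ through the full vector and no such scalar reduction is available.
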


To prove this, we need a classic result on concentration.

\begin{lemma}\label{l.fin_diff}
Let $A_1, A_2,\dots,A_n$ be independent random variables with values in some space $\mathcal X$. Suppose that a function $f:\mathcal X^n\to \R$ satisfies
\begin{align*}
    \sup_{1\leq i\leq n}\sup_{\substack{a_1,\dots, a_n,\\a'_i\in\mathcal X}}|f(a_1,\dots,a_n)- f(a_1,\dots,a_{i-1},a'_i,a_{i+1},\dots, a_n)|\leq c
\end{align*}
for some $c>0$. Then, $\var[f(A)]\leq \frac{1}{4}nc^2$.
\end{lemma}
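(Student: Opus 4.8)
The plan is to run the standard Doob-martingale (bounded-differences) argument. First I would introduce the filtration $\mathcal F_k = \sigma(A_1,\dots,A_k)$ for $k\in\{0,1,\dots,n\}$, with $\mathcal F_0$ trivial, and the associated Doob martingale $F_k = \E[f(A_1,\dots,A_n)\mid \mathcal F_k]$, so that $F_0 = \E[f(A)]$ and $F_n = f(A)$. Setting $\Delta_k = F_k - F_{k-1}$, the increments are martingale differences ($\E[\Delta_k\mid\mathcal F_{k-1}]=0$), hence pairwise orthogonal, and therefore
\[
    \var[f(A)] = \E\Big[\Big(\sum_{k=1}^n \Delta_k\Big)^2\Big] = \sum_{k=1}^n \E\big[\Delta_k^2\big].
\]
It then suffices to show $\E[\Delta_k^2]\le c^2/4$ for each $k$.

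Next I would use independence to get a concrete handle on $\Delta_k$. Because $(A_{k+1},\dots,A_n)$ is independent of $\mathcal F_k$, Fubini's theorem gives $F_k = \bar f_k(A_1,\dots,A_k)$ where $\bar f_k(a_1,\dots,a_k) := \E[f(a_1,\dots,a_k,A_{k+1},\dots,A_n)]$, and similarly $F_{k-1} = \E[\bar f_k(A_1,\dots,A_{k-1},A_k)\mid \mathcal F_{k-1}]$. Averaging the bounded-differences hypothesis in the $k$-th coordinate over $(A_{k+1},\dots,A_n)$ yields
\[
    \big|\bar f_k(a_1,\dots,a_{k-1},a_k)-\bar f_k(a_1,\dots,a_{k-1},a_k')\big|\le c
\]
for all $a_k,a_k'\in\mathcal X$ and all $a_1,\dots,a_{k-1}$. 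Thus, conditionally on $\mathcal F_{k-1}$, the quantity $\bar f_k(A_1,\dots,A_{k-1},A_k)$ — viewed as a function of $A_k$ with the first $k-1$ coordinates frozen — ranges over an interval of length at most $c$, and $\Delta_k$ is precisely this random variable minus its $\mathcal F_{k-1}$-conditional mean.

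Finally I would invoke Popoviciu's inequality: a real random variable confined to an interval of length $c$ has variance at most $c^2/4$ (if $|Y-m|\le c/2$ for the midpoint $m$, then $\var[Y]\le \E[(Y-m)^2]\le c^2/4$). Applied conditionally on $\mathcal F_{k-1}$ this gives $\E[\Delta_k^2\mid\mathcal F_{k-1}]\le c^2/4$ almost surely, hence $\E[\Delta_k^2]\le c^2/4$, and summing over $k$ produces $\var[f(A)]\le nc^2/4$. The argument is essentially routine; the only delicate point is the measure-theoretic bookkeeping in the middle step — justifying, via independence and Fubini, that $F_k$ is represented by the deterministic function $\bar f_k$ with its arguments partially frozen, so that the \emph{pointwise} bounded-differences bound can be upgraded to a bound on the \emph{conditional oscillation} of $\Delta_k$. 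Once that is in place, orthogonality of the martingale increments and Popoviciu's inequality close the argument immediately.
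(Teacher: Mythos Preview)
Your argument is correct. The paper does not actually prove this lemma; it simply notes that it is a corollary of the Efron--Stein inequality and cites \cite[Corollary~3.2]{boucheron2013concentration}. Your route is the Doob-martingale/bounded-differences argument rather than Efron--Stein: you use the exact orthogonal decomposition $\var[f(A)]=\sum_k\E[\Delta_k^2]$ and then bound each conditional variance by $c^2/4$ via Popoviciu, whereas the Efron--Stein route would use the (in general weaker) inequality $\var[f(A)]\le \sum_i \E\big[\var_i f(A)\big]$ with $\var_i$ the conditional variance given all coordinates but the $i$-th, and then invoke Popoviciu in exactly the same way. Both approaches land on the same $\tfrac14 nc^2$ for the same reason (Popoviciu); yours is self-contained and avoids citing an external inequality, while the paper's is a one-line appeal to a named result. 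Either is perfectly adequate here.
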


This is a corollary of the Efron--Stein inequality. We refer to \cite[Corollary~3.2]{boucheron2013concentration} for a proof.

\begin{proof}[Proof of Lemma~\ref{l.cvg_norm}]
Setting
\begin{align*}
    g_L(x) = \frac{1}{n_L}\left|x\right|^2,\quad\forall x \in \R^{n_L},
\end{align*}
we have that
\begin{align}\label{e.g_L(X^L)}
    g_L\left(X^\pr{L}\right) = \frac{1}{n_L}\left|X^\pr{L}\right|^2.
\end{align}
For $l\in\{0,1,\dots, L-1\}$, we can iteratively define
\begin{align}\label{e.def_g_l}
    g_{l}(x) = \E \left[g_{l+1}\left(\varphi_{l+1}\left(\frac{1}{\sqrt{n_l}}\Phi^\pr{l+1}x , A^\pr{l+1}\right)\right)\right], \quad \forall x \in \R^{n_l}.
\end{align}
Due to \eqref{e.X^l}, this implies that
\begin{align*}
    g_l\left(X^\pr{l}\right) = \E\left[g_{l+1}\left(X^\pr{l+1}\right)\Big|X^\pr{l}\right].
\end{align*}
For convenience, we also set
\begin{align*}
    X^\pr{-1} = 0,\qquad g_{-1}(0) = \E \left[g_{0}\left(X^\pr{0}\right)\right].
\end{align*}
Iterating these yields that
\begin{align*}
    g_{-1}\left(X^\pr{-1}\right) = \E\left[g_L\left(X^\pr{L}\right)\right] = \E\left[\frac{1}{n_L}\left|X^\pr{L}\right|^2\right],
\end{align*}
which along with \eqref{e.g_L(X^L)} gives that
\begin{align*}
    \var \left[\frac{1}{n_{L}}\left|X^\pr{L}\right|^2\right] &= \E\left[\left(g_L\left(X^\pr{L}\right)\right)^2 - \left(g_{-1}\left(X^\pr{-1}\right)\right)^2\right]\\
    & = \sum_{l=0}^L \E \left[\left(g_l\left(X^\pr{l}\right)\right)^2 - \left(g_{l-1}\left(X^\pr{l-1}\right)\right)^2\right]\\
    & = \sum_{l=0}^L \E\left[\left(g_l\left(X^\pr{l}\right) -  \E\left[g_l\left(X^\pr{l}\right) \Big| X^\pr{l-1}\right] \right)^2\right].
\end{align*}
Then, the desired result follows if we can show that, for all $l\in\{0,1,\dots,L\}$,
\begin{align}\label{e.var_g_l}
    \E\left[\left(g_l\left(X^\pr{l}\right) -  \E\left[g_l\left(X^\pr{l}\right) \Big| X^\pr{l-1}\right] \right)^2\right] \leq \frac{C}{n}.
\end{align}

For $l=L$, since $X^\pr{L}$ has i.i.d.\ entries when conditioned on $X^\pr{L-1}$ due to \eqref{e.X^l}, the left-hand side of \eqref{e.var_g_l} is given by
\begin{align*}
    &\E\left[\E^\pr{L}\left[\left( \frac{1}{n_{L}}\left|X^\pr{L}\right|^2 - \E^\pr{L}\frac{1}{n_{L}}\left|X^\pr{L}\right|^2\right)^2\right]\right]\\
    & = \frac{1}{n_L}\E \left[\left( \left|X^\pr{L}_1\right|^2  - \E^\pr{L} \left|X^\pr{L}_1\right|^2   \right)^2\right] \leq \frac{C}{n_L}
\end{align*}
where $\E^\pr{L}$ is the expectation with respect to $\Phi^\pr{L}$ and $A^\pr{L}$.

Now, let $l\leq L-1$. Due to \eqref{e.X^l}, $X^\pr{l}$ has i.i.d.\ entries when conditioned on $X^\pr{l-1}$. Recall the notation \eqref{e.A^[l,l'],Phi^[l,l']}.
Due to \eqref{e.X^l}, viewing $X^\pr{L}$ as a deterministic function of $\Phi^{[l+1,m]}$, $A^{[l+1,m]}$ and $X^\pr{l}$, and using \eqref{e.def_g_l}, we can check inductively that
\begin{align*}
    g_l\left(X^\pr{l}\right) = \E\left[\frac{1}{n_{L}}\left|X^\pr{L}\right|^2\bigg|X^\pr{l}\right].
\end{align*}
Then, using \eqref{e.X^l} and the chain rule, we can compute that
for $i_l\in\{1,2,\dots,n_l\}$,
\begin{align}\label{e.partial_g_l}
    \frac{\partial g_l\left(X^\pr{l}\right)}{\partial X^\pr{l}_{i_l}} = \frac{2}{n_L}\sum_{\mathbf{i}} \E \left[ \dot\varphi^\pr{l+1}_{i_{l+1}} \dot \varphi^\pr{l+2}_{i_{l+2}}\cdots\dot \varphi^\pr{L}_{i_{L}}\frac{\Phi^\pr{l+1}_{i_{l+1},i_l}}{\sqrt{n_{l}}}\ \frac{\Phi^\pr{l+2}_{i_{l+2},i_{l+1}}}{\sqrt{n_{l+1}}}\ \cdots \ \frac{\Phi^\pr{L-1}_{i_{L-1},i_{L-2}}}{\sqrt{n_{L-2}}}\ \frac{\Phi^\pr{L}_{i_{L},i_{L-1}}}{\sqrt{n_{L-1}}}\Bigg|X^\pr{l}\right],
\end{align}
where the summation is over
\begin{align}\label{e.bf_i_concent_norm}
    \mathbf{i}&=(i_{l+1},i_{l+2},\dots, i_L)\in \prod_{m=l+1}^L \{1,\dots,n_m\}
\end{align}
and
\begin{align*}
    \dot \varphi^\pr{m}_{i_m} =\varphi'_{m}\left(\frac{1}{\sqrt{n_{m-1}}}\left(\Phi^\pr{m} X^\pr{m-1}\right)_{i_m},A^\pr{m}_{i_m}\right), \quad \forall i_m\in \{1,\dots, n_m\}.
\end{align*}
The derivative on $\varphi_m$ is with respect to its first argument. 

To proceed, we want to perform the Gaussian integration by parts one every $\Phi^\pr{m}_{i_m,i_{m-1}}$ in every summand on the right-hand side of \eqref{e.partial_g_l}. The heuristics is that since $\Phi^\pr{m}_{i_m,i_{m-1}}$ always appears in the form of $\frac{1}{\sqrt{n_{m-1}}} \Phi^\pr{m}X^\pr{m-1}$, we expect to obtain an extra factor of order $n^{-\frac{1}{2}}$ after performing one instance of integration by parts. However, due to the layered structure given in \eqref{e.X^l} and the chain rule, the differentiation involved in the process of integration by parts may produce new terms, the number of which grows as $n$ increases. To cancel this effect, we need to perform more instances of integration by parts on Gaussian variables introduced by the chain rule. 

The above heuristics is made rigorous by Corollary~\ref{c.GIBP} which follows from a more general result Lemma~\ref{l.multip_GIBP}. Applying Corollary~\ref{c.GIBP}, we obtain that each summand in \eqref{e.partial_g_l} has its absolute value bounded by $C n^{-(L-l)}$ where $C$ is absolute. Due to \eqref{e.bf_i_concent_norm}, the summation in \eqref{e.partial_g_l} is over $O(n^{L-l})$ many terms. Therefore, we conclude that, for each $i_l\in\{1,2,\dots, n_l\}$,
\begin{align*}
    \left| \frac{\partial g_l\left(X^\pr{l}\right)}{\partial X^\pr{l}_{i_l}} \right|\leq \frac{C}{n}.
\end{align*}
Invoking Lemma~\ref{l.fin_diff}, we obtain that there is a constant $C$ such that, for almost every realization of $X^\pr{l-1}$,
\begin{align*}
    \E\left[\left(g_l\left(X^\pr{l}\right) -  \E\left[g_l\left(X^\pr{l}\right) \Big| X^\pr{l-1}\right] \right)^2\Bigg| X^\pr{l-1} \right] \leq \frac{C}{n},
\end{align*}
which then gives
\eqref{e.var_g_l} and completes the proof.
\end{proof}

\subsection{Concentration of the free energy}
Recall the definitions of $\Pout_{\beta,L,n}$, $H_{\beta,L,n}$ and $F_{\beta,L,n}$ given in \eqref{e.P_beta(y|x)}, \eqref{e.H_beta_L_n_(t,h)}, and \eqref{e.F_beta_L_n_(t,h)}. The goal is to show the lemma below.

\begin{lemma}\label{l.concent}
Assume \ref{i.assump_iid_bdd}--\ref{i.assump_Phi_Gaussian} for some $L\in \N$. For every $\beta\geq 0$, and $M\geq1$, there is a constant $C>0$ such that
\begin{align*}
    \sup_{t\in[0,1],\  h_1\in[0,\rho_n(1-t)]}\E\left[\left\|F_{\beta,L,n} - \bar F_{\beta,L,n}\right\|^2_{L^\infty_{h_2}([0,M])}(t,h_1)\right]\leq \frac{C}{\sqrt{n}}.
\end{align*}
\end{lemma}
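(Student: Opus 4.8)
\emph{Strategy.} The plan is to reduce the $L^\infty_{h_2}$ estimate to a pointwise one, and then prove the pointwise concentration by the Efron--Stein inequality (of which Lemma~\ref{l.fin_diff} is the special case we will need for the non-Gaussian variables), controlling the Gaussian randomness by one-dimensional Gaussian Poincar\'e increments.

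\emph{Reduction to a pointwise bound.} Fix $(t,h_1)$ in the admissible range and a mesh $0=h_2^{(0)}<\dots<h_2^{(N)}=M$ of spacing $\delta$, so $N\le M/\delta$. Since $\partial_2\bF_{\beta,L,n}\in[0,C]$ by \eqref{e.partial_2_F_n}, $\bF$ is $C$-Lipschitz in $h_2$ and hence
\[
 \|F_{\beta,L,n}-\bF_{\beta,L,n}\|_{L^\infty_{h_2}([0,M])}\le \max_{0\le k\le N}\big|F-\bF\big|(t,h_1,h_2^{(k)})+\omega(\delta)+C\delta,
\]
where $\omega(\delta)=\sup_{|h_2-h_2'|\le\delta}|F(t,h_1,h_2)-F(t,h_1,h_2')|$. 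Integrating the bound $|\partial_2 F|\le C(1+n^{-1/2}h_2^{-1/2}|Z'|)$ from \eqref{e.1st_der_F_n_est} over an interval of length $\le\delta$, and using $\int_0^\delta s^{-1/2}\,ds=2\sqrt\delta$ together with $\E|Z'|^2=n_{L-1}\le Cn$, gives $\E\,\omega(\delta)^2\le C\delta$. Therefore, once we establish the pointwise bound $\E(F-\bF)^2(t,h)\le C/n$ uniformly over $h_2\in[0,M]$, a union bound over the mesh yields $\E\|F-\bF\|_{L^\infty_{h_2}}^2\le C\big(N/n+\delta\big)\le C\big(M/(\delta n)+\delta\big)$, and the choice $\delta=\sqrt{M/n}$ produces the claimed rate $C/\sqrt n$. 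It thus remains to prove the pointwise estimate.

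\emph{Pointwise concentration: the Gaussian coordinates.} At the fixed point $(t,h)$ regard $F=F_{\beta,L,n}(t,h)=\frac1n\log\mathcal Z_{\beta,L,n}$ as a function of the independent random variables of the model: the Gaussian block $\mathbf G=(\Phi^{[1,L]},V,W,Z,Z')$ and the bounded i.i.d.\ block $(X,A^{[1,L]})$ (legitimate by \ref{i.assump_iid_bdd}, \ref{i.assump_Phi_Gaussian}). By the Efron--Stein inequality, $\var(F)$ is dominated by the sum over all coordinates of the expected squared effect of resampling that coordinate. For a single Gaussian coordinate this effect is bounded (one-dimensional Gaussian Poincar\'e) by the expected squared partial derivative; writing $\partial F=\frac1n\langle\partial H\rangle$ with $H$ as in \eqref{e.H_beta_L_n_(t,h)}, the contributions of $Z,Z',V,W$ and of $\Phi^{(L)}$ are elementary: boundedness of $\varphi_L$ and its derivatives (\ref{i.assump_varphi_l_2^l_diff}), the bound \eqref{e.dot_u_bdd} on $\nabla u_Y$, and $|x^{(L-1)}|,|X^{(L-1)}|\le C\sqrt n$ from \eqref{e.|x|<sqrt(n)}, make each of these blocks contribute $O(1/n)$ to the sum. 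The contributions of $\Phi^{(l)}$ with $l<L$ are the delicate Gaussian terms: $\partial_{\Phi^{(l)}_{jk}}F$ involves the derivatives of $x^{(L-1)}$ and of $X^{(L-1)}$ in $\Phi^{(l)}_{jk}$, each of which expands through the layered structure \eqref{e.X^l} into a sum over index chains of products of activation derivatives and Gaussian weights carrying $n^{-1/2}$ factors, exactly as in \eqref{e.partial_g_l}; iterating the Gaussian integration by parts of Corollary~\ref{c.GIBP} (the point at which the higher-order differentiability in \ref{i.assump_varphi_l_2^l_diff} is used) produces the cancellations that reduce this block of the Efron--Stein sum to $O(1/n)$, precisely as in the proof of Lemma~\ref{l.cvg_norm}.

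\emph{Pointwise concentration: the i.i.d.\ coordinates, and the main obstacle.} For the coordinates $X_i$ and $A^{(l)}_j$ a direct partial-derivative bound is too lossy: perturbing $X_i$ changes $X^{(L-1)}$ by an amount of order one in $L^2$ which, paired with $|x^{(L-1)}|\le C\sqrt n$, yields only an $O(1/\sqrt n)$ increment and hence an $O(1)$ bound after summing $O(n)$ terms. The resolution, and the most delicate point of the argument, is that the dependence of the model on a single such coordinate is effectively mediated by norm-type quantities — resampling $X_i$ perturbs the downstream signals only through a lower-order fluctuation of $|X|^2/n_0$, and similarly layer by layer — so that the relevant increments are governed by the norm-concentration mechanism underlying Lemma~\ref{l.cvg_norm}. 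Concretely, one conditions first on $\mathbf G$ and applies Lemma~\ref{l.fin_diff} to $\mathbf A\mapsto\E_{\mathbf G}[F]$; the single-coordinate increments are then estimated by integrating the partial derivatives against $\E_{\mathbf G}$ and invoking once more the iterated Gaussian integration by parts of Corollary~\ref{c.GIBP} through \eqref{e.X^l}, which supplies the extra $n^{-1/2}$ needed to bring each increment down to $O(1/n)$. Summing the $O(n)$ contributions gives $\var(F)\le C/n$ at the fixed $(t,h)$, which completes the proof. The bulk of the work is thus concentrated in the two derivative estimates — for $\partial_{\Phi^{(l)}}F$ with $l<L$ and for the $\mathbf A$-increments — both being instances of the multiple Gaussian integration-by-parts technique of Section~\ref{s.aux}.
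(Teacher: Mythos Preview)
Your $\eps$-net reduction to a pointwise bound is essentially the paper's own argument, and your treatment of the bounded i.i.d.\ block --- applying Lemma~\ref{l.fin_diff} to $\mathbf A\mapsto \E_{\mathbf G}[F]$ and bounding single-coordinate derivatives via Corollary~\ref{c.GIBP} --- is correct in spirit and close to what the paper does (the paper peels off one layer at a time, but the mechanism is the same). The gap is in your handling of the inner Gaussian matrices $\Phi^{(l)}$ with $l<L$.

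For these coordinates, the one-dimensional Gaussian Poincar\'e step demands a bound on the \emph{second moment} $\E\big[|\partial_{\Phi^{(l)}_{jk}}F|^2\big]$. But Corollary~\ref{c.GIBP} is a first-moment statement: it controls $\tilde\E\big[\langle P(\mathcal E)\rangle\prod_m \Phi^{(m)}_{i_m,i_{m-1}}\big]$, and the analogy you draw with Lemma~\ref{l.cvg_norm} breaks down precisely because there the differentiated object $g_l$ is already averaged over the higher-layer $\Phi$'s, so the derivative itself is a first-moment quantity. In your flat decomposition no such averaging has taken place: $\partial_{\Phi^{(l)}_{jk}}F$ still carries explicit factors of $\Phi^{(l+1)},\dots,\Phi^{(L-1)}$ through the chain rule, and squaring produces a double sum over two index chains whose cross terms do \emph{not} fall under Corollary~\ref{c.GIBP} as stated. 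One can likely push a second-moment analogue of Lemma~\ref{l.multip_GIBP} through, but this is genuinely additional work, not an invocation of the existing corollary.

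The paper circumvents this by a hierarchical martingale-type decomposition: it conditions successively on $X^{(L-1)},\Phi^{[1,L]}$, then strips off $A^{(L)}$, then iterates $\E[\hat F\mid X^{(l)},\Phi^{[1,l]}]\to\E[\hat F\mid X^{(l-1)},\Phi^{[1,l]}]\to\E[\hat F\mid X^{(l-1)},\Phi^{[1,l-1]}]$. The point is that when Gaussian Poincar\'e is applied to $\Phi^{(l)}$, the object being differentiated is $\tilde\E\hat F=\E[\hat F\mid X^{(l-1)},\Phi^{[1,l]}]$, which has already integrated out $\Phi^{[l+1,L]}$; thus $\partial_{\Phi^{(l)}_{jk}}\tilde\E\hat F$ is exactly of the form handled by Corollary~\ref{c.GIBP}, yielding a \emph{deterministic} bound $\le C n^{-3/2}$, and Gaussian Poincar\'e then gives $C/n$ trivially. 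The ordering of averaging and differentiating is what makes the first-moment tool sufficient.
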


The remaining part of this subsection is devoted to the proof of this lemma. In addition to Lemma~\ref{l.fin_diff}, we recall one more classic result on concentration.

\begin{lemma}\label{l.GPI}
Let $Z=(Z_1,Z_2,\dots,Z_n)$ be a standard Gaussian vector and $f:\R^n\to\R$ be a continuously differentiable function. Then $\var[f(Z)]\leq \E |\nabla f(Z)|^2$.
\end{lemma}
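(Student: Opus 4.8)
The plan is to establish the inequality by a Gaussian interpolation (rotation) argument, which has the advantage of producing the sharp constant $1$ directly in dimension $n$ without any tensorization bookkeeping. The first step is a routine reduction: since the inequality is vacuous when $\E|\nabla f(Z)|^2=\infty$, and since both sides are stable under mollification and truncation, it suffices to prove it for $f$ smooth with bounded first and second derivatives, the general case following by a standard limiting argument.

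For the main step, let $\tilde Z$ be an independent copy of $Z$ and, for $\theta\in[0,\pi/2]$, set $Z^\theta=\cos\theta\, Z+\sin\theta\,\tilde Z$ and $\dot Z^\theta=-\sin\theta\, Z+\cos\theta\,\tilde Z$. Because $(Z,\tilde Z)\mapsto(Z^\theta,\dot Z^\theta)$ is an orthogonal transformation of $\R^{2n}$, the vectors $Z^\theta$ and $\dot Z^\theta$ are independent standard Gaussian vectors, and $Z=\cos\theta\, Z^\theta-\sin\theta\,\dot Z^\theta$. I would then consider $\Phi(\theta)=\E[f(Z)f(Z^\theta)]$, so that $\Phi(0)=\E[f(Z)^2]$ and $\Phi(\pi/2)=\E[f(Z)]\,\E[f(\tilde Z)]=(\E f(Z))^2$, whence $\var[f(Z)]=-\int_0^{\pi/2}\Phi'(\theta)\,\d\theta$. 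Differentiating under the expectation gives $\Phi'(\theta)=\E[f(Z)\,\nabla f(Z^\theta)\cdot\dot Z^\theta]$; substituting $Z=\cos\theta\, Z^\theta-\sin\theta\,\dot Z^\theta$ and performing Gaussian integration by parts in the variable $\dot Z^\theta$, the term coming from differentiating $\nabla f(Z^\theta)$ vanishes since $Z^\theta\perp\dot Z^\theta$, and one is left with $\Phi'(\theta)=-\sin\theta\,\E[\nabla f(Z)\cdot\nabla f(Z^\theta)]$, hence
\begin{align*}
\var[f(Z)]=\int_0^{\pi/2}\sin\theta\;\E\big[\nabla f(Z)\cdot\nabla f(Z^\theta)\big]\,\d\theta.
\end{align*}

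To conclude, I would bound the integrand by the Cauchy--Schwarz inequality, $\E[\nabla f(Z)\cdot\nabla f(Z^\theta)]\le(\E|\nabla f(Z)|^2)^{1/2}(\E|\nabla f(Z^\theta)|^2)^{1/2}=\E|\nabla f(Z)|^2$, using that $Z^\theta$ is again a standard Gaussian vector; since $\int_0^{\pi/2}\sin\theta\,\d\theta=1$, this yields exactly $\var[f(Z)]\le\E|\nabla f(Z)|^2$. I expect the only delicate point to be the justification of differentiation under the expectation and of the Gaussian integration by parts without extra decay hypotheses on $f$, which is precisely what the initial reduction to smooth, boundedly-differentiable $f$ takes care of. (An alternative, equally short route is to use the subadditivity of the variance over the product Gaussian measure to reduce to the one-dimensional Poincaré inequality, which follows from the Hermite expansion $f=\sum_k a_k H_k$ via $\int(f')^2\,\d\gamma=\sum_k k\,a_k^2\ge\sum_{k\ge1}a_k^2=\var f$; I would mention this but carry out the interpolation proof as the main argument.)
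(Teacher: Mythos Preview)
Your interpolation (rotation) proof is correct and complete: the change of variables $(Z,\tilde Z)\mapsto(Z^\theta,\dot Z^\theta)$, the computation of $\Phi'(\theta)$ via Gaussian integration by parts in $\dot Z^\theta$, and the final Cauchy--Schwarz bound all go through as written, and the reduction to smooth $f$ with bounded derivatives is standard.

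The paper, however, does not prove this lemma at all---it simply cites \cite[Theorem~3.20]{boucheron2013concentration}. The argument in that reference proceeds by tensorization (the Efron--Stein inequality, which the paper also records as Lemma~\ref{l.fin_diff}) to reduce to the one-dimensional case; this is essentially the alternative route you sketch at the end of your proposal. So your main argument is genuinely different from the one the paper points to: you work directly in dimension $n$ via the covariance identity $\var[f(Z)]=\int_0^{\pi/2}\sin\theta\,\E[\nabla f(Z)\cdot\nabla f(Z^\theta)]\,\d\theta$, which gives the sharp constant in one shot and avoids any inductive tensorization step, whereas the cited proof trades that for a more modular structure that reuses the Efron--Stein machinery already present in the paper.
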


This result is often called the Gaussian Poincar\'e inequality, whose proof we refer to that of \cite[Theorem~3.20]{boucheron2013concentration}.

Let $h_2 \in [0,M]$. In the following, $C>0$ denotes a deterministic constant independent of $n$, which may differ from line to line. We also absorb $M$ and $\beta$ into $C$. For simplicity, we write $H=H_{\beta,L,n}$ and $F= F_{\beta,L,n}$. In addition, we set
\begin{gather}\label{e.Gamma}
    \Gamma\left(s,a^\pr{L}\right)=\varphi_L\left(S,A^\pr{L}\right)-\varphi_L\left(s,a^\pr{L}\right),
\end{gather}
where $S$ and $s$ are defined in \eqref{e.cap_S_mu} and \eqref{e.s_mu}, respectively, and
\begin{align}\label{e.a^(L)}
    a^\pr{L} \in \R^{n_L\times k_L}
\end{align}
is of the same size as $A^\pr{L}$.
In view of \eqref{e.P_beta(y|x)} and \eqref{e.H_beta_L_n_(t,h)}, note that $H$ can be rewritten as
\begin{align*}
    H(x,w,a) =\log\bigg(\int e^{-\frac{1}{2}\big|\sqrt{\beta}\Gamma\left(s,a^\pr{L}\right) +Z \big|^2}\d P_{A^\pr{L}}(a^\pr{L})\bigg) +\sqrt{h_2}Y'\cdot x^\pr{L-1}-\frac{h_2}{2}\left|X^\pr{L-1}\right|^2,
\end{align*}
where $Y'$ is given in \eqref{e.Y'} and $a=(a^\pr{1},\cdots, a^\pr{L-1})$ appearing in $x^\pr{L-1}$ is defined in \eqref{e.x^L-1}.
We introduce the Hamiltonian
\begin{align}
    &\hat H\left(x,w,a,a^\pr{L}\right)\notag\\
    &\qquad = -\frac{1}{2}\left(2\sqrt{\beta}Z\cdot\Gamma\left(s,a^\pr{L}\right) +\beta\left|\Gamma\left(s,a^\pr{L}\right)\right|^2\right)+ \sqrt{h_2}Y'\cdot x^\pr{L-1}-\frac{h_2}{2}\left|x^\pr{L-1}\right|^2,\label{e.hat_H}
\end{align}
and the associated free energy
\begin{gather*}
    \hat F=\frac{1}{n}\log\int e^{\hat H(x,w,a,a^\pr{L})}\d P_X(x)\d P_W(w) \d P_{A^{[1,L-1]}}(a)\d P_{A^\pr{L}}\left(a^\pr{L}\right).
\end{gather*}
Then, using these and the definition of $F$ in terms in of $H$ in \eqref{e.F_beta_L_n_(t,h)}, we can see that
\begin{align*}
    F = \hat F - \frac{1}{2n}|Z|^2,
\end{align*}
which implies that
\begin{align}\label{e.Var(F)-Var(hat_F)}
    \var(F)\leq 2 \var (\hat F) + 2\var\bigg(\frac{1}{2n}|Z|^2\bigg)\leq 2 \var (\hat F)+ \frac{C}{n},
\end{align}
where we used the fact that $Z$ is a standard Gaussian vector in $\R^{n_L}$. Therefore, it suffices to study $\var(\hat F)$. In the sequel, we denote by $\la\ \cdot\ \ra$ the Gibbs measure with Hamiltonian~$\hat H$.

Recall the notation \eqref{e.A^[l,l'],Phi^[l,l']}. Note that $\hat F$ is a function of $Z,Z',V,W,A^\pr{L},\Phi^{[1,L]},X^\pr{L-1}$, where the dependence on $Z$ is in \eqref{e.hat_H}; $\Phi^\pr{L},X^\pr{L-1},V,W$ appear in $S$ defined in \eqref{e.cap_S_mu}; $X^\pr{L-1},Z'$ appear in $Y'$ defined in \eqref{e.Y'}; $A^\pr{L}$ appears in \eqref{e.Gamma}; $\Phi^{[1,L-1]}$ appears in $x^\pr{L-1}$ defined in \eqref{e.x^L-1}; and finally $\Phi^\pr{L},V,x^\pr{L-1}$ appear in $s$ defined in \eqref{e.s_mu}.

The plan is to prove concentration of $\hat F$ conditioned on subsets of these random variables, and then combine them together. The order of conditioning matters and we proceed as in \cite{gabrie2019entropy}. Lastly, to get concentration uniformly in $h_2\in[0,M]$, we will apply an $\eps$-net argument.

\subsubsection{Concentration conditioned on $V,W,A^\pr{L},\Phi^{[1,L]},X^\pr{L-1}$}
Denote by $\E_{Z,Z'}$ the expectation with respect to only $Z$ and $Z'$. We want to show that
\begin{align}\label{e:concent_Z,Z'}
    \E_{Z,Z'}\left(\hat F - \E_{Z,Z'}\hat F\right)^2 \leq \frac{C}{n}, 
\end{align}
for almost every realization of other randomness.

For simplicity, we write $\Gamma=\Gamma\left(s,a^\pr{L}\right)$ from now on.
We fix any realization of other randomness. Note that $Z$ appears only in \eqref{e.hat_H} and $Z'$ appears only in $Y'$ (defined in \eqref{e.Y'}). Then, we can compute that
\begin{align*}
    \left|\frac{\partial \hat F}{\partial Z_j}\right|&=\frac{1}{n}\left|\la\sqrt{\beta}\Gamma_j\ra\right|\leq \frac{C}{n},\quad \forall j\in \{1,2,\dots,n_L\}\\
    \left|\frac{\partial \hat F}{\partial Z_i'}\right|&=\frac{1}{n}\left|\sqrt{h_2}\la x^\pr{L-1}_i\ra\right|\leq \frac{C}{n},\quad \forall i\in \{1,2,\dots,n_{L-1}\},
\end{align*}
where we used the boundedness of $\varphi_L$, and the boundedness of $x^\pr{L-1}$ to get the inequalities.
Hence, we have that $|\nabla_{Z,Z'}\hat F|\leq Cn^{-\frac{1}{2}}$ and thus, by Lemma~\ref{l.GPI}, obtain \eqref{e:concent_Z,Z'}.

\subsubsection{Concentration conditioned on $A^\pr{L},\Phi^{[1,L]},X^\pr{L-1}$}

Set $\gau = (Z,Z', V,W,\Phi^\pr{L})$, and let  $\E_\gau$ be the expectation with respect to these Gaussian random variables. We want to show that, a.s.,
\begin{align}\label{e.concent_gau}
    \E_\gau \left(\E_{Z,Z'}\hat F-\E_\gau\hat F\right)^2\leq \frac{C}{n}.
\end{align}
Note that $V$ appears in both $S$ (defined in \eqref{e.cap_S_mu}) and $s$ (defined in \eqref{e.s_mu}) in $\Gamma$ and $W$ appears only in $S$.
Hence, in view of \eqref{e.Gamma}, using the boundedness for the derivatives of $\varphi_L$, we can verify that
\begin{align*}
    \left|\frac{\partial \E_{Z,Z'}\hat F}{\partial V_j}\right|&=\frac{1}{n}\left|\E_{Z,Z'}\la\left(\sqrt{\beta}Z_j+\beta\Gamma_j\right)\frac{\partial  \Gamma_j}{\partial V_j}\ra\right|\leq \frac{C}{n}, \quad\forall j \in \{1,2,\dots, n_L\},\\
    \left|\frac{\partial \E_{Z,Z'}\hat F}{\partial W_j}\right|&=\frac{1}{n}\left|\E_{Z,Z'}\la\left(\sqrt{\beta}Z_j+\beta\Gamma_j\right)\frac{\partial  \Gamma_j}{\partial W_j}\ra\right|\leq \frac{C}{n},\quad\forall j \in \{1,2,\dots, n_L\}.
\end{align*}
On the other hand, $\Phi^\pr{L}$ only appear in both $S$ and $s$. Due to the computation that
\begin{align*}
    \frac{\partial  \Gamma_j}{\partial \Phi^\pr{L}_{jk}} = \sqrt{\frac{ t}{n_{L-1}}}\left(\varphi'_L\left(S,A^\pr{L}\right)X^\pr{L-1}_k -\varphi'_L\left(s,a^\pr{L}\right)x^\pr{L-1}_k  \right),
\end{align*}
where $\varphi'_L$ is the derivative with respect to its first argument, and the boundedness of the derivatives of $\varphi_L$,
we also can show that
\begin{align*}
    \left|\frac{\partial \E_{Z,Z'}\hat F}{\partial \Phi^\pr{L}_{jk}}\right|&=\frac{1}{n}\left|\E_{Z,Z'}\la\left(\sqrt{\beta}Z_j+\beta\Gamma_j\right)\frac{\partial  \Gamma_j}{\partial \Phi^\pr{L}_{jk}}\ra\right|
    \leq \frac{C}{n^\frac{3}{2}}.
\end{align*}
for all $j\in \{1,\dots,n_L\}$ and $k\in \{1,\dots,n_{L-1}\}$.
Therefore,
\begin{align*}
    \left|\nabla_{V,W,\Phi^\pr{L}}\E_{Z,Z'}\hat F\right|^2 = \sum_{j=1}^{n_L}\left|\frac{\partial \E_{Z,Z'}\hat F}{\partial V_j}\right|^2 + \sum_{j=1}^{n_L}\left|\frac{\partial \E_{Z,Z'}\hat F}{\partial W_j}\right|^2 +\sum_{j=1}^{n_L}\sum_{k=1}^{n_{L-1}}\left|\frac{\partial \E_{Z,Z'}\hat F}{\partial \Phi^\pr{L}_{jk}}\right|^2\leq \frac{C}{n},
\end{align*}
which together with Lemma~\ref{l.GPI} implies \eqref{e.concent_gau}.

\subsubsection{Concentration conditioned on $\Phi^{[1,L]},X^\pr{L-1}$}
Fixing any realization of other randomness, we express $\E_\gau \hat F=g(A^\pr{L})$ as a function of $A^\pr{L}$. Then, we fix a realization of $A^\pr{L}$ and let ${A'}^\pr{L}$ be another realization such that $A^\pr{L}_j = {A'}^\pr{L}_j$ for all $j$ except for some $j=i$. We want to show that there is an absolute constant $C$ such that
\begin{align}\label{e.diff_A}
    \left|g\left(A^\pr{L}\right) - g\left({A'}^\pr{L}\right)\right|\leq \frac{C}{n},
\end{align}
which by Lemma~\ref{l.fin_diff} implies that, a.s.,
\begin{align}\label{e.concent_g_A^L}
    \E_{\gau, A^\pr{L}}\left(\E_\gau\hat F - \E_{\gau, A^\pr{L}}\hat F \right)^2\leq \frac{C}{n}.
\end{align}
We denote by $\la \ \cdot \ \ra_{\hat \H}$ the Gibbs measure with $A^\pr{L}$ and $\la \ \cdot \ \ra_{\hat \H'}$ the Gibbs measure with ${A'}^\pr{L}$. Using the definition of $g$, we can verify that
\begin{align*}
    g\left(A^\pr{L}\right) - g\left({A'}^\pr{L}\right) = \frac{1}{n}\E_\gau\log\la e^{\hat H - \hat H'}\ra_{\hat H'}.
\end{align*}
By Jensen's inequality, we have that
\begin{align*}
    g\left(A^\pr{L}\right) - g\left({A'}^\pr{L}\right)\geq \frac{1}{n}\E_\gau \la \hat H - \hat H'\ra_{\hat H'}.
\end{align*}
Symmetrically,
\begin{align*}
    g\left({A'}^\pr{L}\right) - g\left({A}^\pr{L}\right)\geq \frac{1}{n}\E_\gau \la \hat H' - \hat H\ra_{\hat H}.
\end{align*}
Using \eqref{e.Gamma}, \eqref{e.hat_H} and the definitions of $A^\pr{L}$ and ${A'}^\pr{L}$, we have that
\begin{align*}
    \hat H - \hat H' = \frac{1}{2}\left(\Gamma'_i-\Gamma_i\right)\left(2Z_i +\Gamma_i+\Gamma'_i\right)
\end{align*}
where $\Gamma_i$ and $\Gamma'_i$ correspond to $A^\pr{L}$ and ${A'}^\pr{L}$, respectively. Together with the boundedness of $\Gamma,\Gamma'$, the above three displays yield \eqref{e.diff_A} and  thus imply the desired result \eqref{e.concent_g_A^L}.

\subsubsection{Iteration}

Note that in \eqref{e.concent_g_A^L}, we can rewrite that
\begin{align*}\E_{\gau, A^\pr{L}}\hat F  = \E\left[\hat F\Big|X^\pr{L-1},\Phi^{[1,L]}\right].
\end{align*}
To proceed, we claim that
\begin{align}
    \E\left(\E\left[\hat F\Big|X^\pr{l},\Phi^{[1,l]}\right]-\E\left[\hat F\Big|X^\pr{l-1},\Phi^{[1,l]}\right]\right)^2
    &\leq \frac{C}{n}, \qquad \forall l\in\{0,1,\dots, L-1\},\label{e.concent_itr_1}\\
    \E\left(\E\left[\hat F\Big|X^\pr{l-1},\Phi^{[1,l]}\right]-\E\left[\hat F\Big|X^\pr{l-1},\Phi^{[1,l-1]}\right]\right)^2
    &\leq \frac{C}{n},
    \qquad\forall l\in\{1,\dots, L-1\},\label{e.concent_itr_2}
\end{align}
where $X^\pr{-1}$ and $\Phi^{[1,0]}$ are understood to be constantly $0$ (or any constant).
Given the above, we can iterate these to see that
\begin{align}\label{e.concent_X_Phi}
     \E\left(\E\left[\hat F\Big|X^\pr{L-1},\Phi^{[1,L-1]}\right]-\E\left[\hat F\right]\right)^2\leq \frac{C}{n}.
\end{align}
Combining \eqref{e.Var(F)-Var(hat_F)}, \eqref{e:concent_Z,Z'}, \eqref{e.concent_gau}, \eqref{e.concent_g_A^L} and \eqref{e.concent_X_Phi} yields the pointwise concentration
\begin{align}\label{e.ptw_concent}
    \E \left[\left(F- \bar F\right)^2(t,h)\right]\leq \frac{C}{n},\quad\forall (t,h)\in\Omega_{\rho_n}\cap\{|h_2|\leq M\}.
\end{align}
Then, let us prove the assertions \eqref{e.concent_itr_1} and \eqref{e.concent_itr_2}.

\subsubsection{Proof of \eqref{e.concent_itr_1}}

Due to the expression \eqref{e.X^l} and the fact that $\hat F$ depends on $X^\pr{l-1}$ only through $X^\pr{l}$, we can see that
\begin{align*}
    \E\left[\hat F\Big|X^\pr{l},\Phi^{[1,l]}\right] = \E\left[\hat F\Big|X^\pr{l},X^\pr{l-1},\Phi^{[1,l]}\right].
\end{align*}
Also, note that $X^\pr{l}$ consists of i.i.d.\ entries when conditioned on $X^\pr{l-1}$. Hence, we want to apply Lemma~\ref{l.fin_diff}. Since each entry of $X^\pr{l}$ is bounded uniformly in $n$, to verify the condition in Lemma~\ref{l.fin_diff}, it suffices to obtain bounds for derivatives of $\tilde \E\hat F$ with respect to $X^\pr{l}$, where $\tilde \E = \E\left[\ \cdot \ |X^\pr{l},X^\pr{l-1},\Phi^{[1,l]}\right]$.

We introduce the following notation:
\begin{align}\label{e.dot_phi_notation}
    \begin{cases}
    \varphi^\pr{L}_*  =  \varphi_{L}\left(S,A^\pr{L}\right),\\
    \tilde\varphi^\pr{L}_*  =  \varphi_{L}\left(s,a^\pr{L}\right),\\
    \dot \varphi^\pr{m}  = \varphi'_{m}\left(\frac{1}{\sqrt{n_{m-1}}}\Phi^\pr{m}X^\pr{m-1},A^\pr{m}\right),\quad\forall m\in\{1,\dots,L\},\\
    \dot {\tilde\varphi}^\pr{m} = \varphi'_{m}\left(\frac{1}{\sqrt{n_{m-1}}}\Phi^\pr{m}x^\pr{m-1},a^\pr{m}\right),\quad\forall m\in\{1,\dots,L\},\\
    \dot \varphi^\pr{L}_* = \varphi'_L\left(S,A^\pr{L}\right),\\
    \dot {\tilde\varphi}^\pr{L}_* = \varphi'_L\left(s,a^\pr{L}\right),
    \end{cases}
\end{align}
where $\varphi'_m$ is the derivative with respect to its first argument.
For $i_l\in\{1,\dots,n_l\}$, we can compute that
\begin{align}
    \frac{\partial \tilde \E \hat F}{\partial X^\pr{l}_{i_l}}&= -\frac{1}{n}\tilde\E \la\left(\sqrt{\beta}Z+\beta\Gamma\right)\cdot \partial_{X^\pr{l}_{i_l}} \Gamma\ra +\frac{1}{n}\tilde \E\la h_2  x^\pr{L-1}\cdot \partial_{X^\pr{l}_{i_l}} X^\pr{L-1}\ra \notag
    \\
    & = -\frac{\sqrt{t}}{n}\sum_{\mathbf{i}} \tilde\E \left[ \la \sqrt{\beta} Z_{i_L}+\beta \left(\varphi^\pr{L}_{*,i_L}-\tilde\varphi^\pr{L}_{*,i_L}\right)\ra  \dot \varphi^\pr{l+1}_{i_{l+1}}\cdots \dot \varphi^\pr{L-1}_{i_{L-1}}\dot\varphi^\pr{L}_{*,i_{L}}\ \frac{\Phi^\pr{l+1}_{i_{l+1},i_l}}{\sqrt{n_{l}}}\ \cdots \   \frac{\Phi^\pr{L}_{i_{L},i_{L-1}}}{\sqrt{n_{L-1}}}\right]\label{e.iterate_term_1}
    \\
    &\  +\frac{h_2}{n}\sum_{\mathbf{i}'} \tilde\E \left[ \la x^\pr{L-1}_{i_{L-1}}\ra  \dot \varphi^\pr{l+1}_{i_{l+1}}\cdots\dot \varphi^\pr{L-1}_{i_{L-1}}\ \frac{\Phi^\pr{l+1}_{i_{l+1},i_l}}{\sqrt{n_{l}}}\ \frac{\Phi^\pr{l+2}_{i_{l+2},i_{l+1}}}{\sqrt{n_{l+1}}}\ \cdots \ \frac{\Phi^\pr{L-1}_{i_{L-1},i_{L-2}}}{\sqrt{n_{L-2}}}\right]\label{e.iterate_term_2}
\end{align}
where $\sum_\mathbf{i}$ is over \eqref{e.bf_i_concent_norm} and $\sum_{\mathbf{i}'}$ is over
\begin{align}
\begin{split}\label{e.bf_i'}
    \mathbf{i}'&=(i_{l+1},i_{l+2},\dots, i_{L-1})\in \prod_{m=l+1}^{L-1} \{1,\dots,n_m\},
    \end{split}
\end{align}
respectively. The treatments for \eqref{e.iterate_term_1} and \eqref{e.iterate_term_2} are similar to that for \eqref{e.partial_g_l}, where the main tool is the Gaussian integration by parts summarized in Corollary~\ref{c.GIBP}. Recall that heuristics were given below \eqref{e.partial_g_l}. Now, applying Corollary~\ref{c.GIBP} to each summand in \eqref{e.iterate_term_1}, we obtain that, for every $\mathbf{i}$, the summand in \eqref{e.iterate_term_1} has its absolute value bounded by $C n^{-(L-l)}$.
Since $\sum_{\mathbf{i}}$ is over $O(n^{L-l})$ many terms, we conclude that the part in \eqref{e.iterate_term_1} is bounded from both sides by $Cn^{-1}$. Analogous arguments can be applied to \eqref{e.iterate_term_2} to derive a similar bound.
Hence, 
\begin{align*}
    \left|\frac{\partial \tilde \E \hat F}{\partial X^\pr{l}_{i_l}}\right|\leq \frac{C}{n},\quad\forall i_l \in\{1,\dots,n_l\}.
\end{align*}
and thus Lemma~\ref{l.fin_diff} yields \eqref{e.concent_itr_1}.

\subsubsection{Proof of \eqref{e.concent_itr_2}}

Let us redefine $\tilde \E = \E\left[\ \cdot \ |X^\pr{l-1},\Phi^{[1,l]}\right]$.
For $i_l\in\{1,\dots,n_l\}$, $i_{l-1}\in\{1,\dots,n_{l-1}\}$, we can compute
\begin{align*}
    \frac{\partial \tilde \E \hat F}{\partial \Phi^\pr{l}_{i_l,i_{l-1}}} &= -\frac{1}{n}\tilde\E \la\left(\sqrt{\beta}Z+\beta\Gamma\right)\cdot \partial_{\Phi^\pr{l}_{i_l,i_{l-1}}} \Gamma\ra  +\frac{1}{n}\tilde \E\la h_2   x^\pr{L-1}\cdot \partial_{\Phi^\pr{l}_{i_l,i_{l-1}}} X^\pr{L-1}\ra\\
    &\qquad +\frac{1}{n}\tilde \E\la \left(h_2  X^\pr{L-1}+\sqrt{h_2}Z'-h_2 x^\pr{L-1}\right)\cdot \partial_{\Phi^\pr{l}_{i_l,i_{l-1}}} x^\pr{L-1}\ra\\
    & = \mathtt{I}_1 + \mathtt{I}_2 +\mathtt{I}_3.
\end{align*}
Here, 
\begin{align*}
    \mathtt{I}_1 & = -\frac{\sqrt{t}}{n}\sum_{\mathbf{i}} \tilde\E \left[ \la \sqrt{\beta} Z_{i_L}+\beta \left(\varphi^\pr{L}_{*,i_L}-\tilde\varphi^\pr{L}_{*,i_L}\right)\ra \frac{X^\pr{l-1}_{i_{l-1}}}{\sqrt{n_{l-1}}} \dot\varphi^\pr{l}_{i_l} \dot \varphi^\pr{l+1}_{i_{l+1}}\cdots\dot \varphi^\pr{L}_{*,i_{L}}\frac{\Phi^\pr{l+1}_{i_{l+1},i_l}}{\sqrt{n_{l}}}\ \cdots \ \frac{\Phi^\pr{L}_{i_{L},i_{L-1}}}{\sqrt{n_{L-1}}}\right]
    \\
    &+\frac{\sqrt{t}}{n}\sum_{\mathbf{i}} \tilde\E \left[ \la \left( \sqrt{\beta} Z_{i_L}+\beta \left(\varphi^\pr{L}_{*,i_L}-\tilde\varphi^\pr{L}_{*,i_L}\right)\right) \frac{x^\pr{l-1}_{i_{l-1}}}{\sqrt{n_{l-1}}} \dot{\tilde\varphi}^\pr{l}_{i_l} \dot {\tilde\varphi}^\pr{l+1}_{i_{l+1}}\cdots\dot {\tilde\varphi}^\pr{L}_{*,i_{L}}\ra\frac{\Phi^\pr{l+1}_{i_{l+1},i_l}}{\sqrt{n_{l}}}\ \cdots \ \frac{\Phi^\pr{L}_{i_{L},i_{L-1}}}{\sqrt{n_{L-1}}}\right]
    \\
    \mathtt{I}_2 & =  \frac{h_2}{n}\sum_{\mathbf{i}'} \tilde\E \left[ \la x^\pr{L-1}_{i_{L-1}}\ra \frac{X^\pr{l-1}_{i_l}}{\sqrt{n_{l-1}}} \dot\varphi^\pr{l}_{i_l} \dot \varphi^\pr{l+1}_{i_{l+1}}\cdots\dot \varphi^\pr{L-1}_{i_{L-1}}\frac{\Phi^\pr{l+1}_{i_{l+1},i_l}}{\sqrt{n_{l}}}\ \frac{\Phi^\pr{l+2}_{i_{l+2},i_{l+1}}}{\sqrt{n_{l+1}}}\ \cdots \ \frac{\Phi^\pr{L-1}_{i_{L-1},i_{L-2}}}{\sqrt{n_{L-2}}}\right]
    \\
    \mathtt{I}_3 & = 
    \frac{1}{n}\sum_{\mathbf{i}'} \tilde\E \Bigg[ \la \left(h_2  X^\pr{L-1}_{i_{L-1}}+\sqrt{h_2}Z'_{i_{L-1}}-h_2 x^\pr{L-1}_{i_{L-1}}\right) \frac{x^\pr{l-1}_{i_l}}{\sqrt{n_{l-1}}} \dot{\tilde\varphi}^\pr{l}_{i_l} \dot {\tilde\varphi}^\pr{l+1}_{i_{l+1}}\cdots\dot {\tilde\varphi}^\pr{L-1}_{i_{L-1}}\ra
    \\
    &\qquad\qquad \qquad \qquad\qquad \qquad\qquad\qquad  \qquad
    \times\frac{\Phi^\pr{l+1}_{i_{l+1},i_l}}{\sqrt{n_{l}}}\ \frac{\Phi^\pr{l+2}_{i_{l+2},i_{l+1}}}{\sqrt{n_{l+1}}}\ \cdots \ \frac{\Phi^\pr{L-1}_{i_{L-1},i_{L-2}}}{\sqrt{n_{L-2}}}\Bigg]
\end{align*}
where $\mathbf{i}$ and $\mathbf{i}'$ are given in \eqref{e.bf_i_concent_norm} and \eqref{e.bf_i'}, respectively.

Similar to the the treatments for \eqref{e.iterate_term_1} and \eqref{e.iterate_term_2}, applying Corollary~\ref{c.GIBP}, we can see that $|\mathtt{I}_1|,|\mathtt{I}_2|,|\mathtt{I}_3|\leq Cn^{-\frac{3}{2}}$, which implies that
\begin{align*}
    \left|\frac{\partial \tilde \E \hat F}{\partial \Phi^\pr{l}_{i_l,i_{l-1}}}\right|\leq \frac{C}{n^\frac{3}{2}},\quad\forall i_l\in\{1,\dots,n_l\},\ i_{l-1}\in\{1,\dots,n_{l-1}\}.
\end{align*}
Now, we can conclude that $|\nabla_{\Phi^\pr{l}}\tilde \E\hat F|\leq Cn^{-\frac{1}{2}}$ and thus \eqref{e.concent_itr_2} by Lemma~\ref{l.GPI}.

\subsubsection{An $\eps$-net argument}

By \eqref{e.partial_2_F_n} and \eqref{e.1st_der_F_n_est}, there is $C$ such that, for all $t, h_1$ and all $h_2,h'_2\in\R_+$ satisfying $|h_2-h'_2|\leq 1$,
\begin{align*}
    \left|\left(F-\bar F\right)(t,h_1,h_2) -\left(F-\bar F\right)(t,h_1,h'_2) \right|\leq C\left(1+n^{-\frac{1}{2}}\left|Z'\right|\right)\left|h_2-h'_2\right|^\frac{1}{2}.
\end{align*}
Setting $E_\eps = [0,M]\cap \{\eps,2\eps,3\eps,\dots\}$ for $\eps \in (0,1)$, we have that, for all $t,h_1$,
\begin{align*}
    &\E\left[\left\|F - \bar F\right\|^2_{L^\infty_{h_2}([0,M])}(t,h_1)\right]
    \\
    &\leq \E\left[\sup_{h_2\in E_\eps}\left(F - \bar F\right)^2(t,h_1)\right] + \E \left[C\left(1+n^{-\frac{1}{2}}\left|Z'\right|\right)^2\eps\right]
    \\
    &\leq \sum_{h_2\in E_\eps }\E\left[\left(F - \bar F\right)^2(t,h_1)\right]+C\eps\leq C\left(\eps^{-1}n^{-1} + \eps\right),
\end{align*}
where the last inequality follows from \eqref{e.ptw_concent}.
Optimizing this by taking $\eps = n^{-\frac{1}{2}}$ completes the proof of Lemma~\ref{l.concent}.

\subsection{Multiple Gaussian integration by parts}

Denote by $\la \,\cdot\,\ra$ the Gibbs measure with Hamiltonian $\hat H$ given in \eqref{e.hat_H}. Recall the variables $x,w,a,a^\pr{L}$ in $\hat H$, and also the definition of $s$ in \eqref{e.s_mu}. For $\gamma\in\N\cup\{0\}$, we enumerate the replicas, i.e., i.i.d.\ copies of $x, w, a, a^\pr{L}, s$ under $\la \,\cdot\,\ra$, as
\begin{align*}
    x^\rp{\gamma},\  w^\rp{\gamma}, \ a^\rp{\gamma},\ a^\pr{L|\gamma},\ s^\rp{\gamma}.
\end{align*}
Recall the definition of $x^\pr{L-1}$ in \eqref{e.x^L-1}, and we want to extend this. Using \eqref{e.X^l} iteratively, for every $l\in\{0,\dots,L\}$, we can find a deterministic function $\zeta_l$ satisfying
\begin{align*}
    X^\pr{l} = \zeta_l\left(X^\pr{0}, A^{[1,l]}, \Phi^{[1,l]}\right)
\end{align*}
where we understand that $A^{[1,0]}=0$ and $\Phi^{[1,0]}=0$. Replacing $X^\pr{0}, A^{[1,l]}$ above by $x$ and projections of $(a,a^\pr{L})$, we can define $x^\pr{l}$ in a way analogous to \eqref{e.x^L-1}:
\begin{align*}
    x^\pr{l}=\zeta_l\left(x, \pi_{[1,l]}\left(a,a^\pr{L}\right),\Phi^{[1,l]}\right)
\end{align*}
where $\pi_{[1,l]}$ is the projection of the first $\sum_{m=1}^l n_m k_m$ coordinates into $\prod_{m=1}^l \R^{n_m\times k_m}$ (recall $a=(a^\pr{1},\dots, a^\pr{L-1})$ given in \eqref{e.x^L-1} and $a^\pr{L}$ in \eqref{e.a^(L)}).
For $\gamma\in\N\cup\{0\}$, we denote by $x^\pr{l|\gamma}$ the $\gamma$-th replica of $x^\pr{l}$. We also set $\hat {H}^\rp{\gamma}$ to be $\hat H$ with variables replaced by their $\gamma$-th replicas.

Recall $S$ in \eqref{e.cap_S_mu}. For $\nu\in \N\cup\{0\}$, $\gamma \in \N\cup\{0\}$, $j\in\{1,\dots, n_m\}$, we introduce 
\begin{align*}
    \varphi^\pr{m|\nu}_j & = \frac{\partial^\nu }{\partial r^\nu}\varphi_m\left(r, A^\pr{m}_j\right)\Big|_{r = \frac{1}{\sqrt{n_{m-1}}}(\Phi^\pr{m}X^\pr{m-1})_j},\quad \forall m\in\{1,\dots,L\},
    \\
    \varphi^\pr{L|\nu}_{*,j} &= \frac{\partial^\nu }{\partial r^\nu}\varphi_L\left(r, A^\pr{L}_j\right)\Big|_{r = \frac{1}{\sqrt{n_{L-1}}}(\Phi^\pr{L}S)_j},
    \\
    \tilde\varphi^\pr{m|\nu|\gamma}_j & = \frac{\partial^\nu }{\partial r^\nu}\varphi_m\left(r, a^\pr{m}_j\right)\Big|_{r = \frac{1}{\sqrt{n_{m-1}}}(\Phi^\pr{m}x^\pr{m-1|\gamma})_j},\quad \forall m\in\{1,\dots,L\},
    \\
    \tilde\varphi^\pr{L|\nu|\gamma}_{*,j} & = \frac{\partial^\nu }{\partial r^\nu}\varphi_L\left(r, a^\pr{L}_j\right)\Big|_{r = \frac{1}{\sqrt{n_{L-1}}}(\Phi^\pr{L}s^{|\gamma)})_j}.
\end{align*}
In particular, $\varphi^\pr{m|0}_j = X^\pr{m}_j$ and $\tilde\varphi^\pr{m|0|\gamma}_j = x^\pr{m|\gamma}_j$ and note that these two identities can be extended to $m=0$.
Recall that $Z$ and $Z'$ are standard Gaussian vectors given in \eqref{e.Y^circ} and \eqref{e.Y'}, respectively. We introduce the following collections of random variables
\begin{align*}
    \mathcal Z & = \{Z_j\}_{1\leq j\leq n_L}\cup \{Z'_j\}_{1\leq j\leq n_{L-1}},
    \\
    \mathcal M^\pr{l|\nu|\gamma}_j & = \bigcup_{\tilde\nu\in\{0,\dots,\nu\}}\bigcup_{\tilde\gamma\in\{0,\dots,\gamma\}} \left\{\varphi^\pr{l|\tilde\nu}_j,\ \tilde\varphi^\pr{l|\tilde\nu|\tilde\gamma}_j\right\},\quad\forall l \leq L-1, 
    \\
    \mathcal M^\pr{L|\nu|\gamma}_j & = \bigcup_{\tilde\nu\in\{0,\dots,\nu\}} \bigcup_{\tilde\gamma\in\{0,\dots,\gamma\}} \left\{\varphi^\pr{L|\tilde\nu}_j,\ \varphi^\pr{L|\tilde\nu}_{*,j},\ \tilde\varphi^\pr{L|\tilde\nu|\tilde\gamma}_j,\ \tilde\varphi^\pr{L|\tilde\nu|\tilde\gamma}_{*,j}\right\},
    \\
    \bar {\mathcal M} & =  \bigcup_{l\in\{1,\dots,L\}}\bigcup_{\nu\in\N\cup\{0\}}\bigcup_{\gamma\in\N\cup\{0\}}\bigcup_{i_l\in\{1,\dots,n_l\}}\mathcal M^\pr{l|\nu|\gamma}_{i_l}.
\end{align*}
For $\nu_1,\nu_2,\dots,\nu_L,\gamma\in\N\cup\{0\}$, we set
\begin{align*}
    \mathcal N^\pr{\nu_1,\dots,\nu_L|\gamma} = \mathcal Z\cup\left( \bigcup_{m=1}^L\bigcup_{j_m=1}^{n_m}\mathcal M^\pr{m|\nu_m|\gamma}_{j_m}\right).
\end{align*}

For $d,r\in\N$, let $\mathbf{P}_{d,r}$ be the collection of polynomials of degree up to $d$ over $\R^r$ with real coefficients. For every $P\in \mathbf{P}_{d,r}$ expressed as
\begin{align*}
    P(x) = \sum a_{p_1,p_2,\dots ,p_r} x^{p_1}_1x^{p_2}_2\cdots x^{p_r}_r
\end{align*}
where the summation is over
\begin{align*}
    \left\{(p_1,p_2,\dots ,p_r) \in \left(\N\cup\{0\}\right)^r: \sum_{i=1}^r p_i \leq d\right\},
\end{align*}
we define
\begin{align*}
    \|P\| = \sum |a_{p_1,p_2,\dots ,p_r}|.
\end{align*}
Slightly abusing the notation, we view any finite subcollection $\mathcal E \subset \bar{\mathcal M}$ as an ordered tuple of random variables. In this notation, for any $P\in \mathbf{P}_{d,|\mathcal E|}$ for some $d$, we view $P(\mathcal E)$ as $P$ evaluated at $\mathcal E$.
Lastly, for $a,b\in\R$, we write $a\vee b =\max\{a,b\}$.

\begin{lemma}\label{l.multip_GIBP}
Let $l\in \{1,\cdots,L\}$, $\nu_1,\dots,\nu_L \in \N\cup\{0\}$, $\gamma \in \N\cup\{0\}$, $\beta\geq 0$, $M\geq 1$. In addition to \ref{i.assump_w_|X|<sqrt_n}, assume that $\Phi^\pr{m}$ consists of i.i.d.\ standard Gaussian entries and that $\varphi_m$ is bounded and continuously differentiable with bounded derivatives up to $\nu'_m$-th order for every $m\in\{1,\dots,L\}$, where
\begin{align}\label{e.nu'_m}
    \nu'_m = \nu_m + (2^{m-l+1}-1)\vee0,\quad\forall m\in\{1,\dots,L\}.
\end{align}
Then, there are constants $C,\gamma',d'$ such that the following holds.

For every $k\in\{l,\dots, L\}$, every $n\in\N$, every $(t,h)\in \Omega_{\rho_{L-1,n}}\cap\{|h_2|\leq M\}$, every $i_m \in \{1,\dots,n_m\}$ with $m\in\{l-1,\dots,k\}$, every 
\begin{align*}
    \mathcal E =  \mathcal N^\pr{\nu_1,\dots,\nu_L|\gamma}
\end{align*}
and every $P\in \mathbf{P}_{d,|\mathcal E|}$, there is $P' \in \mathbf{P}_{d',|\mathcal E'|}$ for some
\begin{align*}
    \mathcal E' = \mathcal N^\pr{\nu'_1,\dots,\nu'_L|\gamma'}
\end{align*}
such that
\begin{align}\label{e.induction_result_multi_GIBP}
    \E^{[l,k]} \left[\la P(\mathcal E)\ra\prod_{m=l}^k\Phi^\pr{m}_{i_m, i_{m-1}}\right] = n^{-\frac{1}{2}(k-l+1)} \E^{[l,k]} \la P'(\mathcal{E}')\ra 
\end{align}
and
\begin{align}\label{e.|P'|<C|P|}
    \|P'\|\leq C\|P\|,
\end{align}
where $\E^{[l,k]}$ is the expectation with respect to $\Phi^{[l,k]}$.
\end{lemma}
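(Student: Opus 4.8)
The plan is to establish the identity \eqref{e.induction_result_multi_GIBP} by induction on the length $k-l+1$ of the chain of matrix entries $\Phi^\pr{l}_{i_l,i_{l-1}},\dots,\Phi^\pr{k}_{i_k,i_{k-1}}$, repeatedly applying Gaussian integration by parts (in the elementary form recorded in Section~\ref{s.approx}) to remove one entry at a time. The structural fact that drives everything is that $\Phi^\pr{m}_{i_m,i_{m-1}}$ enters the model only linearly, through the rescaled pre-activations $\frac{1}{\sqrt{n_{m-1}}}(\Phi^\pr{m}X^\pr{m-1})_{i_m}$ and $\frac{1}{\sqrt{n_{m-1}}}(\Phi^\pr{m}x^\pr{m-1|\gamma})_{i_m}$ (and, when $m=L$, in $S$ and each replica $s^\rp{\gamma}$), in each of which its coefficient $\frac{1}{\sqrt{n_{m-1}}}X^\pr{m-1}_{i_{m-1}}$, resp.\ $\frac{1}{\sqrt{n_{m-1}}}x^\pr{m-1|\gamma}_{i_{m-1}}$, is bounded (by \ref{i.assump_w_|X|<sqrt_n} together with boundedness of the $\varphi_m$) and does not involve $\Phi^\pr{m'}$ for any $m'\ge m$. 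Hence one integration by parts in $\Phi^\pr{m}_{i_m,i_{m-1}}$ yields the advertised factor $n^{-1/2}$, introduces one new $\varphi^\pr{m-1|0}$- or $\tilde\varphi^\pr{m-1|0|\gamma}$-variable (that is, $X^\pr{m-1}_{i_{m-1}}$ or $x^\pr{m-1|\gamma}_{i_{m-1}}$, already of the type catalogued in $\mathcal N$), and raises by one the differentiation order of a single $\varphi_{m'}$ with $m'\ge m$.

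For the base case $k=l$, Gaussian integration by parts in $\Phi^\pr{l}_{i_l,i_{l-1}}$ rewrites $\E^{[l,l]}[\la P(\mathcal E)\ra\,\Phi^\pr{l}_{i_l,i_{l-1}}]$ as $\E^{[l,l]}[\partial_{\Phi^\pr{l}_{i_l,i_{l-1}}}\la P(\mathcal E)\ra]$; expanding the derivative by the product and chain rules — it hits $P$ itself, the Hamiltonian $\hat H$ of \eqref{e.hat_H} inside $e^{\hat H}$, and the partition function (the latter two producing replica differences via the Nishimori identity) — exhibits the result as $n^{-1/2}$ times a Gibbs average of a polynomial obtained from $P$ by adjoining $O(1)$ new variables, each lying in an enlarged collection $\mathcal N^\pr{\nu'_1,\dots,\nu'_L|\gamma'}$ (including the $Z,Z'$ of $\mathcal Z$ that occur in $\hat H$ and $Y'$), and by multiplying coefficients by bounded quantities; this produces $P'$ together with the bound $\|P'\|\le C\|P\|$ for a constant $C$ independent of $n$.

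In the inductive step I would peel off the top entry $\Phi^\pr{k}_{i_k,i_{k-1}}$, which is independent of $\Phi^\pr{l},\dots,\Phi^\pr{k-1}$ and so leaves $\prod_{m=l}^{k-1}\Phi^\pr{m}_{i_m,i_{m-1}}$ intact after one integration by parts. The contributions in which the derivative lands on a layer-$k$ pre-activation carry the factor $n^{-1/2}$ and the same shorter chain, so the inductive hypothesis with $k-1$ in place of $k$ closes them, with total factor $n^{-1/2}\,n^{-(k-l)/2}=n^{-(k-l+1)/2}$. The remaining contributions are those in which the $\Phi^\pr{k}_{i_k,i_{k-1}}$-derivative propagates, through the feed-forward recursion \eqref{e.X^l}, to a variable at a layer $m>k$; the chain rule then exposes a fresh sub-chain of entries $\Phi^\pr{k+1},\dots,\Phi^\pr{m}$ with intermediate indices summed, which I would treat by iterating the same procedure. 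The crucial point is that each such freshly exposed entry, when integrated by parts in turn, is either compensated directly by a matching $n^{-1/2}$ or else hits another exposed (or backbone) entry sharing a lower index and collapses the corresponding free index sum through a Kronecker delta, so that the index summations and the accumulated $n^{-1/2}$ powers balance exactly and net $n^{-\frac12(k-l+1)}$. Since removing a layer-$m$ entry can only create entries at layers strictly above $m$, the lexicographically ordered vector of per-layer entry counts strictly decreases at each step, so the recursion terminates; bookkeeping the $\varphi_m$-orders consumed along the recursion tree gives exactly $\nu'_m=\nu_m+(2^{m-l+1}-1)\vee 0$ as in \eqref{e.nu'_m}, the doubling reflecting that climbing one layer can double the number of sub-chains that must subsequently be eliminated, while the monomial count, hence $\|P'\|$, grows only by an $n$-independent factor, giving \eqref{e.|P'|<C|P|}.

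The step I expect to be the main obstacle is precisely this off-diagonal bookkeeping: one must check that every index summation produced by the chain rule is cancelled, neither over- nor under-compensated, by exactly the right combination of $n^{-1/2}$ factors and index-collapsing Kronecker deltas so that the net scaling is $n^{-\frac12(k-l+1)}$; that the intermediate expressions genuinely remain of the form $\E^{[\cdot,\cdot]}\la(\text{polynomial in }\mathcal N^\pr{\cdots})\ra$ once all exposed entries have been integrated out; and that the complexity order chosen to run the induction is genuinely well-founded. The smoothness order $\nu'_m$ of \eqref{e.nu'_m} and the constant $C$ of \eqref{e.|P'|<C|P|} are then read off from the resulting finite, $n$-independent recursion tree.
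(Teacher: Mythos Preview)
Your induction scheme has the order reversed, and this is a genuine obstacle rather than just bookkeeping. In your base case $k=l$, a single integration by parts in $\Phi^\pr{l}_{i_l,i_{l-1}}$ does \emph{not}, when $l<L$, produce simply $n^{-1/2}$ times a Gibbs average of a polynomial in $\mathcal N$-variables. The reason is that $\Phi^\pr{l}$ enters every layer $m>l$ through the recursion \eqref{e.X^l}, so the derivative of any $\varphi^\pr{m|\nu}_{j_m}$ with $m>l$ (and of the Hamiltonian $\hat H$, which involves $X^\pr{L-1}$ and $S$) with respect to $\Phi^\pr{l}_{i_l,i_{l-1}}$ produces, by the chain rule, an explicit product $\prod_{\tilde m=l+1}^{m}\Phi^\pr{\tilde m}_{j_{\tilde m},j_{\tilde m-1}}$ with intermediate indices summed. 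These fresh $\Phi$-factors are \emph{not} variables catalogued in $\mathcal N$ and cannot be absorbed into $P'(\mathcal E')$; they must themselves be removed by further integrations by parts. So even your base case already requires the full machinery, and your induction hypothesis (on chain length, equivalently on the top index $k$) gives no leverage on the new chains, which sit at layers strictly above $k$.

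The paper runs the induction in the opposite direction: on $l$, from $L$ downward. The base case $l=L$ is then genuinely simple, because $\Phi^\pr{L}$ is at the top layer and differentiating in it creates no sub-chains (see \eqref{e.d_tilde_phi_L}). In the induction step (assuming the lemma for $l+1$), one first applies the hypothesis to remove the entire upper chain $\Phi^\pr{l+1},\dots,\Phi^\pr{k}$ at once, obtaining $n^{-(k-l)/2}$ times a clean polynomial expression, and only \emph{then} integrates by parts in the remaining bottom entry $\Phi^\pr{l}_{i_l,i_{l-1}}$. The sub-chains this last step creates all start at layer $l+1$, so the same induction hypothesis applies to each of them directly, yielding the extra $n^{-1/2}$ and the derivative-order bookkeeping that gives \eqref{e.nu'_m}. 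This is why the paper's order works and yours does not: peeling off the bottom entry ensures that all newly exposed $\Phi$-factors live at layers already covered by the induction hypothesis, whereas peeling off the top entry sends them to layers beyond its reach. Your lexicographic termination argument could in principle be made rigorous, but it would require proving a stronger statement about arbitrary multi-chain products and carrying out the index-balance accounting you flag as the main obstacle; the paper's downward induction sidesteps all of that.
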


Recall the notation introduced in \eqref{e.dot_phi_notation}. We state an immediate corollary of Lemma~\ref{l.multip_GIBP}.

\begin{corollary}\label{c.GIBP}
Assume \ref{i.assump_iid_bdd}--\ref{i.assump_Phi_Gaussian} for some $L\in \N$. For $l\in \{1,\dots,L\}$, $d\in\N$, $\beta\geq 0$, $M\geq1$, there is $C$ such that the following holds. Suppose that $P\in \mathbf{P}_{d,\,2L-2l+10}$ is a monomial with coefficient $1$ and independent of $n$. Then, for every $k\in\{l,\dots,L\}$, every $n\in\N$, every $(t,h)\in \Omega_{\rho_{L-1,n}}\cap\{|h_2|\leq M\}$, every $i_m \in \{1,\dots,n_m\}$ with $m\in\{l-1,\dots,k\}$, it holds that
\begin{align*}
    \tilde \E \left[\la P(\mathcal E)\ra\prod_{m=l}^k\Phi^\pr{m}_{i_m, i_{m-1}}\right] \leq Cn^{-\frac{1}{2}(k-l+1)}
\end{align*}
where
\begin{align*}
    \mathcal E = \bigg(Z_{i_L},\ Z'_{i_{L-1}},\ \varphi^\pr{L}_{*,i_L},\ \tilde\varphi^\pr{L}_{*,i_{L}},\ \dot\varphi^\pr{L}_{*,i_L},\ \dot{\tilde\varphi}^\pr{L}_{*,i_{L}},\ X^\pr{L-1}_{i_{L-1}},\ x^\pr{L-1}_{i_{L-1}},\ 
    \\
    \left(\dot\varphi^\pr{m}_{i_m}\right)_{m=l}^{L-1},\ \left(\dot{\tilde\varphi}^\pr{m}_{i_m}\right)_{m=l}^{L-1},\ X^\pr{l-1}_{i_{l-1}},\ x^\pr{l-1}_{i_{l-1}}\bigg)
\end{align*}
and $\tilde \E$ integrates over $Z_{i_L}$, $Z'_{i_{L-1}}$ and $(\Phi^\pr{m}_{i_m,i_{m-1}})_{m=l}^k$.
\end{corollary}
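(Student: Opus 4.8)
The plan is to deduce the estimate directly from Lemma~\ref{l.multip_GIBP}, so that the only work is to match the objects of the corollary to the hypotheses of that lemma, to quote it, and then to bound crudely the polynomial it returns. First I would match the parameters. Given the monomial $P$ in the $2L-2l+10$ coordinates of $\mathcal E$, observe that each of these coordinates lies in $\mathcal N^\pr{\nu_1,\dots,\nu_L|\gamma}$ once the multi-index and replica depth are taken large enough: with $\nu_m=1$ for $m\in\{l,\dots,L\}$, $\nu_m=0$ for $m<l$, and $\gamma$ equal to the largest replica index appearing in $\mathcal E$ (an absolute constant), the collection $\mathcal N^\pr{\nu_1,\dots,\nu_L|\gamma}$ already contains the Gaussians $Z_{i_L},Z'_{i_{L-1}}\in\mathcal Z$, the activations $X^\pr{m}_{i_m}=\varphi^\pr{m|0}_{i_m}$, the cavity values $x^\pr{m}_{i_m}$, the first derivatives $\dot\varphi^\pr{m}_{i_m}=\varphi^\pr{m|1}_{i_m}$ and $\dot{\tilde\varphi}^\pr{m}_{i_m}$, and the starred layer-$L$ analogues occurring in $\mathcal E$. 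Hence $\la P(\mathcal E)\ra=\la\hat P(\mathcal N^\pr{\nu_1,\dots,\nu_L|\gamma})\ra$ for a monomial $\hat P$ of degree at most $d$ with $\|\hat P\|=1$.

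Next I would invoke Lemma~\ref{l.multip_GIBP} with these $\nu_1,\dots,\nu_L$, this $\gamma$, and with $k$ as in the statement. Its hypotheses follow from \ref{i.assump_iid_bdd}--\ref{i.assump_Phi_Gaussian}: \ref{i.assump_w_|X|<sqrt_n} is implied by \ref{i.assump_iid_bdd}, the Gaussianity of each $\Phi^\pr{m}$ is part of \ref{i.assump_Phi_Gaussian}, and the differentiability of $\varphi_m$ with bounded derivatives up to order $\nu'_m=\nu_m+(2^{m-l+1}-1)\vee0$ — which satisfies $\nu'_m\le 2^{m-l+1}\le 2^m$ because $l\ge1$ — is supplied by \ref{i.assump_varphi_l_2^l_diff}. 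The lemma then produces a polynomial $P'\in\mathbf{P}_{d',|\mathcal E'|}$ over $\mathcal E'=\mathcal N^\pr{\nu'_1,\dots,\nu'_L|\gamma'}$ with $\|P'\|\le C\|\hat P\|=C$ such that
\[
\E^{[l,k]}\Big[\la\hat P(\mathcal N^\pr{\nu_1,\dots,\nu_L|\gamma})\ra\prod_{m=l}^k\Phi^\pr{m}_{i_m,i_{m-1}}\Big]=n^{-\frac12(k-l+1)}\,\E^{[l,k]}\la P'(\mathcal E')\ra .
\]

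Then I would bound the right-hand side by an $n$-independent constant. Every coordinate of $\mathcal E'$ other than $Z_{i_L},Z'_{i_{L-1}}$ is bounded almost surely, uniformly in $n$ and in all indices: the layer-$0$ entries by the support assumption in \ref{i.assump_iid_bdd}, and for $m\ge1$ the quantities $\varphi^\pr{m|\tilde\nu}_j$, $\tilde\varphi^\pr{m|\tilde\nu|\tilde\gamma}_j$ and their starred variants by the boundedness of $\varphi_m$ together with that of its derivatives up to order $\nu'_m$ in \ref{i.assump_varphi_l_2^l_diff}. Since $Z_{i_L}$ and $Z'_{i_{L-1}}$ appear in $P'$ to powers at most $d'$, H\"older's inequality and the finiteness of Gaussian moments give $|\E^{[l,k]}\la P'(\mathcal E')\ra|\le C\|P'\|\le C$, and therefore $|\E^{[l,k]}[\la\hat P(\mathcal N^\pr{\nu_1,\dots,\nu_L|\gamma})\ra\prod_{m=l}^k\Phi^\pr{m}_{i_m,i_{m-1}}]|\le Cn^{-\frac12(k-l+1)}$. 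The passage from $\E^{[l,k]}$ to the $\tilde\E$ of the statement is immaterial: the bound is deterministic and the remaining integrations are over independent standard Gaussians entering polynomially, so the same estimate persists after applying $\tilde\E$. This is the claim.

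There is essentially no analytic obstacle at this level; the substance — the iterated Gaussian integration by parts that simultaneously extracts the gain $n^{-(k-l+1)/2}$ and keeps under control the proliferation of terms produced by the chain rule through the layered structure \eqref{e.X^l} — is entirely contained in Lemma~\ref{l.multip_GIBP}. The only care needed here is bookkeeping: verifying that the inclusion $\mathcal E\subset\mathcal N^\pr{\nu_1,\dots,\nu_L|\gamma}$ can be arranged with $\nu_m\le 1$, and that the inflated orders $\nu'_m$ still respect the differentiability budget $2^m$ granted by \ref{i.assump_varphi_l_2^l_diff}.
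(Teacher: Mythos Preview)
Your proposal is correct and follows essentially the same approach as the paper: rewrite the entries of $\mathcal E$ in the lemma's notation to exhibit the inclusion $\mathcal E\subset\mathcal N^\pr{\nu_1,\dots,\nu_L|\gamma}$ for small $\nu_m$ and $\gamma$, check that the required differentiability order $\nu'_m\le 2^m$ is covered by \ref{i.assump_varphi_l_2^l_diff}, and invoke Lemma~\ref{l.multip_GIBP}. The paper uses $\nu_m=1$ for all $m$ and $\gamma=0$ while you take $\nu_m=0$ for $m<l$, but both choices work; you also spell out the crude bound on $\E^{[l,k]}\la P'(\mathcal E')\ra$ and the passage to $\tilde\E$, which the paper leaves implicit.
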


\begin{proof}[Proof of Corollary~\ref{c.GIBP}]
Comparing \eqref{e.dot_phi_notation} with the notation here, we can rewrite
\begin{align*}
    \mathcal E = \bigg(Z_{i_L},\ Z'_{i_{L-1}},\ \varphi^\pr{L|0}_{*,i_L},\ \tilde\varphi^\pr{L|0|0}_{*,i_{L}},\ \varphi^\pr{L|1}_{*,i_L},\ \tilde\varphi^\pr{L|1|0}_{*,i_{L}},\ \varphi^\pr{L-1|0}_{i_{L-1}},\ \tilde\varphi^\pr{L-1|0|0}_{i_{L-1}},\ 
    \\
    \left(\varphi^\pr{m|1}_{i_m}\right)_{m=l}^{L-1},\ \left(\tilde\varphi^\pr{m|1|0}_{i_m}\right)_{m=l}^{L-1},\ \varphi^\pr{l-1|0}_{i_{l-1}},\ \tilde\varphi^\pr{l-1|0|0}_{i_{l-1}}\bigg).
\end{align*}
Hence, we have that $\mathcal E \subset \mathcal N^\pr{1,1,\dots,1|0}$. This corollary follows from Lemma~\ref{l.multip_GIBP} by setting $\gamma=0$ and $\nu_m=1$ for all $m$ and noticing that the differentiability condition \eqref{e.nu'_m} is fulfilled by assumption~\ref{i.assump_varphi_l_2^l_diff}.
\end{proof}

\begin{proof}[Proof of Lemma~\ref{l.multip_GIBP}]
We use induction on $l$ and start with the base case $l=L$. 
The Gaussian integration by parts yields that
\begin{align}\label{e.E^L_GIBP_L}
    \E^\pr{L} \la P(\mathcal E)\Phi^\pr{L}_{i_L, i_{L-1}}\ra = \E^\pr{L} \left[\partial_{\Phi^\pr{L}_{i_L, i_{L-1}}}\la P(\mathcal E)\ra\right] = \sum_{\phi\in\bar{\mathcal M}}\E^\pr{L}\la  \mathscr{P}_\phi\partial_{\Phi^\pr{L}_{i_L, i_{L-1}}}\phi \ra
\end{align}
where $\E^\pr{L} = \E^{[L,L]}$, and, by the chain rule, viewing $\phi\in\bar{\mathcal M}$ as labels for the arguments in $P$ and $\hat H^\pr{\tilde\gamma}$, we have that
\begin{gather}\label{e.G_phi_L}
    \mathscr{P}_\phi = \partial_\phi P(\mathcal E) + P(\mathcal E)\left( \sum_{\tilde\gamma=0}^\gamma \partial_\phi \hat{H}^\rp{\tilde\gamma} - \gamma \partial_\phi\hat{H}^\rp{\gamma+1}\right),
\end{gather}
with
\begin{align}
    \partial_\phi \hat{H}^\rp{\tilde\gamma} & = -\sum_{j=1}^{n_L}\left(\mathbf{1}_{\phi = \varphi^\pr{L|0}_{*,j}}-\mathbf{1}_{\tilde\phi = \varphi^\pr{L|0|\tilde\gamma}_{*,j}}\right)\left(\sqrt{\beta}Z_j+ \beta \left( \varphi^\pr{L|0}_{*,j} - \tilde  \varphi^\pr{L|0|\tilde\gamma}_{*,j}\right)\right)
    \label{e.d_phi_hat_H}\\
    &\quad + \sum_{j=1}^{n_{L-1}}\mathbf{1}_{\phi = \varphi^\pr{L-1|0}_{j}}h_2 \tilde \varphi^\pr{L-1|0|\tilde\gamma}_j\notag\\
    &\quad +\sum_{j=1}^{n_{L-1}} \mathbf{1}_{\phi = \varphi^\pr{L-1|0|\tilde \gamma}_{j}}\left(h_2\varphi^\pr{L-1|0}_j+\sqrt{h_2}Z'_j - h_2 \tilde \varphi^\pr{L-1|0|\tilde\gamma}_j \right).\notag
\end{align}
Let us clarify \eqref{e.d_phi_hat_H}. Due to the definition of $\hat H$ in \eqref{e.hat_H}, fixing $Z,Z'$, we can view $\hat H^\rp{\tilde\gamma}$ as a function of $\varphi_L\left(S,A^\pr{L}\right)$, $ \varphi_L\left(s^\rp{\tilde\gamma},a^\pr{L|\tilde \gamma}\right)$, $ X^\pr{L-1}$, $x^\pr{L-1|\tilde\gamma}$, or equivalently, $\varphi^\pr{L|0}_*$, $ \tilde\varphi^\pr{L|0|\tilde\gamma}_*$, $ \varphi^\pr{L-1|0}$, $\tilde\varphi^\pr{L-1|0|\tilde\gamma}$. Therefore, when viewing these as labels for the variables inside $\hat H^\rp{\tilde\gamma}$, we have \eqref{e.d_phi_hat_H} and the left-hand side of it is nonzero only if $\phi$ is an entry of those vectors.

Next, let us show that
\begin{align}\label{e.P_phi_d_phi_neq_0_L}
    \mathscr{P}_\phi \partial_{\Phi^\pr{L}_{i_L, i_{L-1}}}\phi \neq 0
    \quad\text{only if}\quad
    \phi \in \mathcal M^\pr{L|\nu_L|\gamma+1}_{i_L}.
\end{align}
From \eqref{e.G_phi_L} and \eqref{e.d_phi_hat_H}, we can see that
\begin{align}\label{e.G_phi_neq_0_L}
    \mathscr{P}_\phi \neq 0\quad\text{only if}\quad
    \phi \in \mathcal E\cup \left(\bigcup_{j_{L-1}=1}^{n_{L-1}}\mathcal M^\pr{L-1|0|\gamma+1}_{j_{L-1}}\right) \cup \left(\bigcup_{j_{L}=1}^{n_L}\mathcal M^\pr{L|0|\gamma+1}_{j_{L}}\right).
\end{align}
On the other hand, due to \eqref{e.X^l}, note that
\begin{align}\label{e.dphi_neq_0_L}
    \partial_{\Phi^\pr{L}_{i_L, i_{L-1}}}\phi \neq 0 
    \quad\text{only if}\quad
    \phi \in \bigcup_{\tilde \nu \in \N}\bigcup_{\tilde \gamma\in\N }\mathcal M^\pr{L|\tilde\nu|\tilde\gamma}_{i_L}.
\end{align}
The intersection of sets in \eqref{e.G_phi_neq_0_L} and \eqref{e.dphi_neq_0_L} is a subset of the set in \eqref{e.P_phi_d_phi_neq_0_L}. Hence, \eqref{e.P_phi_d_phi_neq_0_L} is valid.

Due to \eqref{e.dphi_neq_0_L}, $\partial_{\Phi^\pr{L}_{i_L, i_{L-1}}}\phi$ in \eqref{e.P_phi_d_phi_neq_0_L}, whenever nonzero, is of one of the four forms below, for some $\tilde \nu\leq \nu_L$ and $\tilde\gamma\leq \gamma+1$,
\begin{align}
    \begin{cases}\label{e.d_tilde_phi_L}
    \partial_{\Phi^\pr{L}_{i_L, i_{L-1}}} \varphi^\pr{L|\tilde\nu}_{i_L}  = \varphi^\pr{L|\tilde\nu+1}_{i_L}\frac{1}{\sqrt{n_{L-1}}}X^\pr{L-1}_{i_{L-1}}
    \\
    \partial_{\Phi^\pr{L}_{i_L, i_{L-1}}} \tilde\varphi^\pr{L|\tilde\nu|\tilde\gamma}_{i_L}  = \varphi^\pr{L|\tilde\nu+1|\tilde\gamma}_{i_L}\frac{1}{\sqrt{n_{L-1}}}x^\pr{L-1|\tilde\gamma}_{i_{L-1}}
    \\
    \partial_{\Phi^\pr{L}_{i_L, i_{L-1}}} \varphi^\pr{L|\tilde\nu}_{*,i_L}  = \varphi^\pr{L|\tilde\nu+1}_{*,i_L}\frac{1}{\sqrt{n_{L-1}}}S_{i_{L-1}}
    \\
    \partial_{\Phi^\pr{L}_{i_L, i_{L-1}}} \tilde\varphi^\pr{L|\tilde\nu|\tilde\gamma}_{*,i_L}  = \tilde\varphi^\pr{L|\tilde\nu+1|\tilde\gamma}_{*,i_L}\frac{1}{\sqrt{n_{L-1}}}s^\rp{\tilde\gamma}_{i_{L-1}}
    \end{cases}
\end{align}
Using this, \eqref{e.G_phi_L} and \eqref{e.d_phi_hat_H}, we can see that for
\begin{align*}
    \mathcal E'  =\mathcal N^\pr{\nu_1,\dots,\nu_{L-1},\nu_{L}+1|\gamma+1}
\end{align*}
there is a polynomial $P'_\phi\in \mathbf{P}_{d',|\mathcal E'|}$ for some $d'$ such that
\begin{align}\label{e.G'_phi_L}
    P'_\phi(\mathcal E')= n^\frac{1}{2}\mathscr{P}_\phi \partial_{\Phi^\pr{L}_{i_L, i_{L-1}}}\phi.
\end{align}
Here, the scalar $n^\frac{1}{2}$ is to make $n^\frac{1}{2} \partial_{\Phi^\pr{L}_{i_L, i_{L-1}}}\phi$ to be of order $1$.
By \eqref{e.E^L_GIBP_L} and  \eqref{e.P_phi_d_phi_neq_0_L}, setting
\begin{align*}
    P'(\mathcal E') = \sum_{\phi\in \mathcal M^\pr{L|\nu_L|\gamma+1}_{i_L}} P'_\phi (\mathcal E')
\end{align*}
we have
\begin{align*}
    \E^\pr{L} \la P(\mathcal E)\Phi^\pr{L}_{i_L, i_{L-1}}\ra = n^{-\frac{1}{2}} \E^\pr{L} \la P'(\mathcal E '
    )\ra.
\end{align*}
Using \eqref{e.G_phi_L}, \eqref{e.d_phi_hat_H}, \eqref{e.d_tilde_phi_L} and \eqref{e.G'_phi_L}, we can see that
\begin{align*}
    \|P'\|\leq C\|P\|
\end{align*}
for some constant $C$ that depends only on $L,\nu_1,\dots,\nu_L, \gamma,\beta,M$.

Now, we consider the induction step and assume that the lemma holds for $l+1\leq L$. In the following, we denote by $C$ a constant that depends only on $l,\nu_1,\dots,\nu_L, \gamma,\beta,M$ and may vary from line to line. Setting $\E^\pr{l}= \E^{[l,l]}$ and using the induction assumption for $l+1$, we get that for
\begin{align*}
    \mathcal{F}=\mathcal N^\pr{\nu'_1,\dots,\nu'_L|\gamma'}
\end{align*}
with some $\gamma'>0$ and
\begin{align}\label{e.a'_induction}
    \nu'_m = \nu_m + (2^{m-l}-1)\vee0,\quad\forall m\in\{1,\dots,L\},
\end{align}
there is $Q\in \mathbf{P}_{d',|\mathcal F|}$ for some $d'$  such that
\begin{align}
    \E^{[l,k]} \left[\la P(\mathcal E)\ra\prod_{m=l}^k\Phi^\pr{m}_{i_m, i_{m-1}}\right] & = \E^\pr{l}\left[\E^{[l+1,k]} \left[\la P(\mathcal E)\ra\prod_{m=l+1}^k\Phi^\pr{m}_{i_m, i_{m-1}}\right]\Phi^\pr{l}_{i_l, i_{l-1}}\right]\notag\\
    & = n^{-\frac{1}{2}(k-l)}\E^{[l,k]} \left[\la Q(\mathcal F)\ra \Phi^\pr{l}_{i_l, i_{l-1}}\right] \label{e.induction_GIBP}
\end{align}
and
\begin{gather}
    \|Q\|\leq C\|P\|.\label{e.|G|<C|G'|}
\end{gather}
Applying the Gaussian integration by parts to the last expectation in \eqref{e.induction_GIBP} yields
\begin{align}\label{e.EGPhi_GIBP}
    \E^{[l,k]}\left[\la Q(\mathcal F)\ra \Phi^\pr{l}_{i_l, i_{l-1}}\right] = \E^{[l,k]}\left[\la \partial_{\Phi^\pr{l}_{i_l, i_{l-1}}} Q(\mathcal F)\ra \right] = \sum_{\phi\in\bar{\mathcal M}} \E^{[l,k]}\la  \mathscr{Q}_\phi\partial_{\Phi^\pr{l}_{i_l, i_{l-1}}}\phi \ra
\end{align}
where
\begin{align}\label{e.Q_phi}
     \mathscr{Q}_\phi = \partial_\phi Q(\mathcal F) + Q(\mathcal F)\left( \sum_{\tilde\gamma=0}^{\gamma'} \partial_\phi \hat{H}^\rp{\tilde\gamma} - \gamma'\partial_\phi \hat{H}^\rp{\gamma'+1}\right).
\end{align}

Next, we show that
\begin{align}\label{e.G'_phi_d_phi_neq_0_induction}
    \mathscr{Q}_\phi \partial_{\Phi^\pr{l}_{i_l, i_{l-1}}}\phi \neq 0
    \quad\text{only if}\quad
    \phi \in\mathcal M^\pr{l|\nu'_l|\gamma'+1}_{i_l}\cup \left(\bigcup_{m=l+1}^L\bigcup_{j_m=1}^{n_m}\mathcal M^\pr{m|\nu'_m|\gamma'+1}_{j_m}\right).
\end{align}
Similar to the derivation of \eqref{e.G_phi_neq_0_L}, using \eqref{e.Q_phi} and \eqref{e.d_phi_hat_H},  we can see that
\begin{align*}\mathscr{Q}_\phi \neq 0
    \quad\text{only if}\quad
    \phi \in \mathcal F \cup \left(\bigcup_{j_{L-1}=1}^{n_{L-1}}\mathcal M^\pr{L-1|0|\gamma'+1}_{j_{L-1}}\right) \cup \left(\bigcup_{j_{L}=1}^{n_L}\mathcal M^\pr{L|0|\gamma'+1}_{j_{L}}\right)
\end{align*}
Due to \eqref{e.X^l}, note that
\begin{align*}
    \partial_{\Phi^\pr{l}_{i_l, i_{l-1}}}\phi \neq 0
    \quad\text{only if}\quad
    \phi \in \bigcup_{\tilde \nu\in\N}\bigcup_{\tilde \gamma\in \N}\left( \mathcal M^\pr{l|\tilde\nu|\tilde\gamma}_{i_l}\cup \left(\bigcup_{m=l+1}^L\bigcup_{j_m=1}^{n_m}\mathcal M^\pr{m|\tilde\nu|\tilde\gamma}_{j_m}\right)\right).
\end{align*}
The intersection of the sets in the above two displays is contained in the set in \eqref{e.G'_phi_d_phi_neq_0_induction} and thus \eqref{e.G'_phi_d_phi_neq_0_induction} is valid.

Then, we compute the summands
in \eqref{e.EGPhi_GIBP}. Due to \eqref{e.G'_phi_d_phi_neq_0_induction}, we distinguish two cases:
\begin{align}\label{e.two_cases_GIBP}
    \phi \in\mathcal M^\pr{l|\nu'_l|\gamma'+1}_{i_l}\quad\text{or}\quad \phi \in \bigcup_{m=l+1}^L\bigcup_{j_m=1}^{n_m}\mathcal M^\pr{m|\nu'_m|\gamma'+1}_{j_m}.
\end{align}

Let us consider the first case in \eqref{e.two_cases_GIBP}. Since $l+1\leq L $, $\partial_{\Phi^\pr{l}_{i_l, i_{l-1}}}\phi$ has one of the two forms below, for $\tilde\nu\leq \nu'_L$ and $\tilde \gamma\leq \gamma'+1$,
\begin{align*}
    \partial_{\Phi^\pr{l}_{i_l, i_{l-1}}}\varphi^\pr{l|\tilde\nu}_{i_l} & = \varphi^\pr{l|\tilde\nu+1}_{i_l} \frac{1}{\sqrt{n_{l-1}}}X^\pr{l-1}_{i_{l-1}},\\
    \partial_{\Phi^\pr{l}_{i_l, i_{l-1}}}\tilde\varphi^\pr{l|\tilde\nu|\tilde\gamma}_{i_l} & = \varphi^\pr{l|\tilde\nu+1|\tilde\gamma}_{i_l} \frac{1}{\sqrt{n_{l-1}}}x^\pr{l-1|\tilde\gamma}_{i_{l-1}}
\end{align*}
From this, \eqref{e.Q_phi} and \eqref{e.d_phi_hat_H}, we can see that, for every $\phi$ belonging to the first set in \eqref{e.two_cases_GIBP}, there is a polynomial $Q'_\phi\in\mathbf{P}_{d_\phi,|\mathcal F'|}$ for some $d_\phi$nd
\begin{gather}
    \mathcal F' = \mathcal N^\pr{\bar\nu_1,\dots,\bar\nu_L|\gamma'+1}\label{e.mathcal_F'}
\end{gather}
with
\begin{align}\label{e.bar_alpha_m}
    \bar\nu_m = 
    \begin{cases}
        \nu'_m+1 &\quad m\geq l\\
        \nu'_m &\quad m\leq l-1
    \end{cases}
\end{align}
such that
\begin{gather}
    Q'_\phi(\mathcal F') = n^\frac{1}{2}\mathscr{Q}_\phi \partial_{\Phi^\pr{l}_{i_l, i_{l-1}}}\phi,\quad \forall \phi \in\mathcal M^\pr{l|\nu'_l|\gamma'+1}_{i_l},\notag \\
    \|Q'_\phi\|\leq C\|Q\|, \quad \forall \phi \in\mathcal M^\pr{l|\nu'_l|\gamma'+1}_{i_l}.\label{e.|bar_G|<C|G'|_m=l}
\end{gather}
Therefore
\begin{align}\label{e.EG'_phi_d_phi_m=l}
    \E^{[l,k]}\la  \mathscr{Q}_\phi\partial_{\Phi^\pr{l}_{i_l, i_{l-1}}}\phi \ra = n^{-\frac{1}{2}} \E \la Q'_\phi(\mathcal F')\ra, \quad\forall \phi \in\mathcal M^\pr{l|\nu'_l|\gamma'+1}_{i_l}.
\end{align}

Now, we turn to the second case in \eqref{e.two_cases_GIBP}. Let us assume that
\begin{align}\label{e.second_case}
    \phi \in \mathcal M_{j_m}^\pr{m|\nu'_m|\gamma'+1},\quad m\in\{l+1,\dots,L\},\ {j_m}\in\{1,\dots,n_m\}.
\end{align}
Then, due to \eqref{e.X^l} and the chain rule, $\partial_{\Phi^\pr{l}_{i_l, i_{l-1}}}\phi$ is one of the following, for $\tilde \nu \leq\nu'_m, \tilde\gamma\leq\gamma'+1$:
\begin{align*}
    &\partial_{\Phi^\pr{l}_{i_l, i_{l-1}}}\varphi^\pr{m|\tilde\nu}_{j_m}
    \\
    &\qquad=  \varphi^\pr{m|\tilde\nu+1}_{j_m}\sum_{\mathbf{j}}\left(\prod_{\tilde m=l+1}^{m}\frac{1}{\sqrt{n_{\tilde m-1}}}\Phi^\pr{\tilde m}_{j_{\tilde m}, j_{\tilde m-1}}\varphi^\pr{\tilde m-1|1}_{j_{\tilde m-1}}\right)\bigg|_{j_l=i_l}\frac{1}{\sqrt{n_{l-1}}}X^\pr{l-1}_{i_{l-1}},
    \\
    &\partial_{\Phi^\pr{l}_{i_l, i_{l-1}}}\tilde\varphi^\pr{m|\tilde\nu|\tilde\gamma}_{j_m} 
    \\
    &\qquad=  \tilde\varphi^\pr{m|\tilde\nu+1|\tilde\gamma}_{j_m}\sum_{\mathbf{j}}\left(\prod_{\tilde m=l+1}^{m}\frac{1}{\sqrt{n_{\tilde m-1}}}\Phi^\pr{\tilde m}_{j_{\tilde m}, j_{\tilde m-1}}\tilde\varphi^\pr{\tilde m-1|1|\tilde\gamma}_{j_{\tilde m-1}}\right)\bigg|_{ j_l=i_l}\frac{1}{\sqrt{n_{l-1}}}x^\pr{l-1|\tilde\gamma}_{i_{l-1}}.
\end{align*}
where the summation is over 
\begin{align}\label{e.mathbf_j}
    \mathbf{j}=(j_{l+1},j_{l+2},\dots, j_{m-2},j_{m-1})\in \prod_{\tilde m=l+1}^{m-1} \{1,\dots,n_{\tilde m}\}.
\end{align}
When $m=L$, there are two more possibilities $\partial_{\Phi^\pr{l}_{i_l, i_{l-1}}}\varphi^\pr{L|\tilde\nu}_{*,j_L}$ and $\partial_{\Phi^\pr{l}_{i_l, i_{l-1}}}\tilde\varphi^\pr{L|\tilde\nu|\tilde\gamma}_{*,j_L}$, which are similar to the above and omitted for brevity.
These computations allow us to write that
\begin{align}\label{e.EQ_phi_d_phi_m>l}
    \E^{[l,k]}\la  \mathscr{Q}_\phi\partial_{\Phi^\pr{l}_{i_l, i_{l-1}}}\phi \ra =n^{-\frac{1}{2}(m-l+1)}\sum_{\mathbf{j}}\E^{[l,k]}\left[\la  \mathscr{Q}_\phi g_{\phi,\,\mathbf{j}}\ra \prod_{\tilde m=l+1}^{m} \Phi^\pr{\tilde m}_{j_{\tilde m}, j_{\tilde m-1}}\right]\,\bigg|_{j_l=i_l}
\end{align}
where
\begin{align}\label{e.g_phi,j}
    g_{\phi,\,\mathbf{j}} = 
    \begin{cases}
    \varphi^\pr{m|\tilde\nu+1}_{j}\left(\prod_{\tilde m=l+1}^{m}\sqrt{\frac{n}{n_{\tilde m-1}}}\varphi^\pr{\tilde m-1|1}_{j_{\tilde m-1}}\right)\sqrt{\frac{n}{n_{l-1}}}X^\pr{l-1}_{i_{l-1}}, &\quad \phi =\varphi^\pr{m|\tilde\nu}_{j_m},
    \\
    \tilde\varphi^\pr{m|\tilde\nu+1|\tilde\gamma}_{j}\left(\prod_{\tilde m=l+1}^{m}\sqrt{\frac{n}{n_{\tilde m-1}}}\tilde\varphi^\pr{\tilde m-1|1|\tilde\gamma}_{j_{\tilde m-1}}\right)\sqrt{\frac{n}{n_{l-1}}}x^\pr{l-1|\tilde\gamma}_{i_{l-1}}, &\quad \phi =\tilde\varphi^\pr{m|\tilde\nu|\tilde\gamma}_{j_m}.
    \end{cases}
\end{align}
By these, \eqref{e.Q_phi} and \eqref{e.d_phi_hat_H}, there is a polynomial $Q_{\phi,\,\mathbf{j}}\in\mathbf{P}_{d',|\mathcal F'|}$ for some larger $d'$ independent of $\phi$, $\mathbf{j}$ and for $\mathcal F'$ in \eqref{e.mathcal_F'} such that
\begin{align}\label{e.Q_phi_g_phi_j}
    Q_{\phi,\,\mathbf{j}}(\mathcal F')=\mathscr{Q}_\phi g_{\phi,\,\mathbf{j}} 
\end{align}
which, due to \eqref{e.g_phi,j}, also satisfies that
\begin{align}\label{e.|Q_phi_j|<}
    \|Q_{\phi,\,\mathbf{j}}\|\leq C\|Q\|.
\end{align}
Recall that we are considering the case \eqref{e.second_case}.
Insert \eqref{e.Q_phi_g_phi_j} into the right-hand side of \eqref{e.EQ_phi_d_phi_m>l} and applying the induction assumption for $l+1$ to every summand there yields that
\begin{align}
    \E^{[l,k]}\la  \mathscr{Q}_\phi\partial_{\Phi^\pr{l}_{i_l, i_{l-1}}}\phi \ra 
    & = n^{-\frac{1}{2}(m-l+1)}\sum_{\mathbf{j}}\E^{[l,k]}\left[\la  Q_{\phi,\,\mathbf{j}}(\mathcal F')\ra \prod_{\tilde m=l+1}^{m} \Phi^\pr{\tilde m}_{j_{\tilde m}, j_{\tilde m-1}}\right]\,\bigg|_{j_l=i_l}\notag
    \\
    &=n^{-(m-l+\frac{1}{2})}\sum_{\mathbf{j}}\E^{[l,k]}\la Q'_{\phi,\,\mathbf{j}}(\mathcal E') \ra,\qquad\forall \phi \in \mathcal M_{j_m}^\pr{m|\nu'_m|\gamma'+1}\label{e.G'_phi_d_phi_m>l}
\end{align}
for some polynomials $Q'_{\phi,\,\mathbf{j}}\in \mathbf{P}_{d',|\mathcal E'|}$ for some larger $d'$, and
\begin{align}\label{e.E'_induction}
    \mathcal E' = \mathcal N^\pr{\nu''_1,\dots,\nu''_L|\gamma''}
\end{align}
with some larger $\gamma''$ and
\begin{align}\label{e.alpha''_m}
    \nu''_m = \bar\nu_m + (2^{m-l}-1)\vee0,\quad\forall m\in\{1,\dots,L\},
\end{align}
where $\bar\nu_m$ is given in \eqref{e.bar_alpha_m}.
In addition, each of these polynomials satisfies that
\begin{align}\label{eq:|Q'_phi,j|<}
    \|Q'_{\phi,\,\mathbf{j}}\|\leq C\|Q_{\phi,\,\mathbf{j}}\|.
\end{align}
Since $\sum_{\mathbf{j}}$ is a summation of $O(n^{m-l-1})$ many terms due to \eqref{e.mathbf_j}, setting
\begin{align}\label{e.P'_phi(E')}
    P'_\phi(\mathcal E') = n^{-(m-l-1)}\sum_{\mathbf{j}} Q'_{\phi,\,\mathbf{j}}(\mathcal E'),
\end{align}
and using \eqref{e.|Q_phi_j|<} and \eqref{eq:|Q'_phi,j|<},
we obtain that
\begin{align}
    \|P'_\phi\|\leq C\|Q\|,\quad\forall \phi \in \mathcal M_{j_m}^\pr{m|\nu'_m|\gamma'+1}.\label{e.|bar_G|<C|G'|_m>l}
\end{align}
Inserting \eqref{e.P'_phi(E')} into \eqref{e.G'_phi_d_phi_m>l} gives that
\begin{align}\label{e.EG'_phi_d_phi_m>l}
    \E^{[l,k]}\la  \mathscr{Q}_\phi\partial_{\Phi^\pr{l}_{i_l, i_{l-1}}}\phi \ra =n^{-\frac{3}{2}}\E^{[l,k]}\la  P'_{\phi}(\mathcal E')\ra,\quad\forall \phi \in \mathcal M_{j_m}^\pr{m|\nu'_m|\gamma'+1}
\end{align}
for $m\in\{l+1,\dots,L\}$, $ {j_m}\in\{1,\dots,n_m\}$.

Now, we are ready to conclude.
Due to \eqref{e.G'_phi_d_phi_neq_0_induction}, the summation in \eqref{e.EGPhi_GIBP} can to restricted to be over the set in \eqref{e.G'_phi_d_phi_neq_0_induction}. Also note that $\mathcal F'\subset \mathcal E'$ due to their definitions in \eqref{e.mathcal_F'} and \eqref{e.E'_induction}.
Using these, \eqref{e.EG'_phi_d_phi_m=l} and \eqref{e.EG'_phi_d_phi_m>l}, we can rewrite the left-hand side of \eqref{e.EGPhi_GIBP} as
\begin{align}
    \E^{[l,k]}\left[\la Q(\mathcal F)\ra \Phi^\pr{l}_{i_l, i_{l-1}}\right] &=\left(\sum_{\phi\in\mathcal M^\pr{l|\nu'_l|\gamma'+1}_{i_l}} + \sum_{m=l+1}^L\sum_{j_m=1}^{n_m}\sum_{\phi\in\mathcal M_{j_m}^\pr{m|\nu'_m|\gamma'+1}}\right)\E^{[l,k]}\la  \mathscr{Q}_\phi\partial_{\Phi^\pr{l}_{i_l, i_{l-1}}}\phi \ra \notag
    \\
    & = n^{-\frac{1}{2}}\E^{[l,k]}\la P'(\mathcal E') \ra \label{e.n^-1/2EP'E'}
\end{align}
where
\begin{align}\label{e.P'(E')}
    P'(\mathcal E') = \sum_{\phi\in\mathcal M^\pr{l|\nu'_l|\gamma'+1}_{i_l}} Q'_\phi(\mathcal F') + \sum_{m=l+1}^L\sum_{j_m=1}^{n_m}\sum_{\phi\in\mathcal M_{j_m}^\pr{m|\nu'_m|\gamma'+1}}n^{-1} P'_{\phi}(\mathcal E').
\end{align}
Inserting \eqref{e.n^-1/2EP'E'} to \eqref{e.induction_GIBP} gives the desired result \eqref{e.induction_result_multi_GIBP}.
Then, we verify \eqref{e.|P'|<C|P|}.
Note that $\sum_{j=1}^{n_m}$ in \eqref{e.P'(E')} is a summation of $O(n)$ many terms. Using this, \eqref{e.|bar_G|<C|G'|_m=l}, and \eqref{e.|bar_G|<C|G'|_m>l}, we obtain that
\begin{align*}
    \|P'\|\leq C\|Q\|,
\end{align*}
which along with \eqref{e.|G|<C|G'|} implies \eqref{e.|P'|<C|P|}. Lastly, by \eqref{e.a'_induction}, \eqref{e.bar_alpha_m} and \eqref{e.alpha''_m}, we can see that $\nu''_m$ in the definition of $\mathcal E'$ in \eqref{e.E'_induction} satisfies
\begin{align*}
    \nu''_m = \nu_m + (2^{m-l+1}-1)\vee0,\quad\forall m\in\{1,\dots,L\},
\end{align*}
completing the proof.
\end{proof}

\small
\bibliographystyle{abbrv}
\newcommand{\noop}[1]{} \def\cprime{$'$}

\end{document}